\theoremstyle{plain}
\newtheorem{theorem}{Theorem}
\newtheorem{thm}{\textsf{\textbf{Theorem}}}[section]
\newtheorem{lem}[thm]{\textsf{\textbf{Lemma}}}
\newtheorem{cor}[thm]{\textsf{\textbf{Corollary}}}
\newtheorem{prop}[thm]{\emph{Proposition}}
\newtheorem*{thm*}{\textsf{\textbf{Theorem}}}
\newtheorem*{prop*}{\emph{Proposition}}
\newtheorem*{claim*}{\emph{Claim}}
\theoremstyle{definition}
\newtheorem{dfn}[thm]{\textbf{\textsf{Definition}}}
\newtheorem{rem}[thm]{{\textsf{Remark}}}
\newcommand{\Aaa }{\mathcal A}
\newcommand{\Raa }{\mathcal R}
\newcommand{\Uee }{\mathcal U}
\newcommand{\nat }{\mathbb N}
\newcommand{\real }{\mathbb R}
\DeclareMathOperator{\inter}{Int}
\DeclareMathOperator{\cl}{Cl}
\newcommand{\fr}{\partial}
\DeclareMathOperator{\diam}{diam}
\title{Strange attractors for the family of orientation preserving Lozi maps}
\author{Przemys{\l}aw Kucharski}
\date{}
\newcommand{\norma}[1]{\parallel#1\parallel}
\newcommand{\modul}[1]{|#1|}
\DeclareMathOperator{\sgnt}{sgn}
\newcommand{\sgn}[1]{\sgnt(#1)}
\newcommand{\clu}[1]{\bar{#1}}
\newcommand{\stmani}[1]{W^{s}_{#1}} 
\newcommand{\ustmani}[1]{W^{u}_{#1}} 
\newcommand{\stlocmani}[1]{W^{s}_{#1,loc}}
\newcommand{\ustlocmani}[1]{W^{u}_{#1,loc}}
\newcommand{\seg}[1]{\overline{#1}}
\newcommand{\omegalim}[1]{\omega(#1)}
\newcommand{\plane }{\mathbb R^{2}}
\newcommand{\dH}[2]{d_{H}(#1,#2)}
\newcommand{\ball}[2]{B_{#2}(#1)}
\newcommand{\exrotation}[1]{\rho(#1)}
\newcommand{\lebmes}[1]{l(#1)}
\providecommand{\affiliations}[1]
{
	\small	
	\textbf{\textit{Author's affiliation}---} #1
}
\begin{document}
		\maketitle
		\date{}
		\affiliations{Faculty of Mathematics and Computer Science,
			Jagiellonian University in Krak\'{o}w
			ul. {\L}ojasiewicza 6, 30-348 Krak\'{o}w, Poland}
		\begin{abstract}
			We extend the result of Micha{\l} Misiurewicz assuring the existence of strange attractors for the parametrized family $\{f_{(a,b)}\}$ of orientation reversing Lozi maps to the orientation preserving case. That is, we rigorously determine an open subset of the parameter space for which an attractor $\Aaa_{(a,b)}$ of $f_{(a,b)}$ always exists and exhibits chaotic properties. Moreover, we prove that the attractor is maximal in some open parameter region, and arises as the closure of the unstable manifold of a fixed point, on which $f_{(a,b)}|_{\Aaa_{(a,b)}}$ is mixing. We also show that $\Aaa_{(a,b)}$ vary continuously with parameter $(a,b)$ in the Hausdorff metric.
		\end{abstract}
\textbf{ In the theory of dynamical systems, attractor is a set of states, which system is expected to approach with the progress of time, for a considerable amount of initial conditions. Moreover, states neighbouring an attractor tend to stay close to it as system evolves. Therefore, attractors are of particular interest to researchers, since they can be thought of as a set of states approximating evolution of a system and hence reducing analysis to study of certain subsystems. Especially important to the field seems to be strange attractors, which are subsystems additionally exhibiting some kind of chaotic behaviour. Strange attractors have been observed in physical electronic chaotic circuits, models for atmospheric convection or chemical reactions. In our paper we study strange attractors arising from the Lozi maps, a family of piecewise affine homemorphisms of the plane. Study of the Lozi family helps researchers develop tools for analysis of less amenable systems. An example would be the H\'{e}non family, which plays important role in the theory of continuous time dynamical systems modelling atmospheric convection (Lorenz systems). In this paper we show the existence of strange attractors for a wide variety of parameters, for which the Lozi family is orientation preserving, generalizing results of Michał Misiurewicz. A significant difference between our and other similar results in the field is that we additionally show that the attractors are maximal in a certain way, which effectively reduces study of the whole system to a subsystem restricted to an attractor.}
\section*{Introduction}
The family of Lozi maps is a parametrized family of piecewise linear planar homeomorphisms given by $f_{(a,b)}(x,y)=(1+y-a|x|,bx)$ for $a,b\in \mathbb{R}$. It has been introduced in 1978 by Ren\'e Lozi as a simplification of H\'{e}non family, potentially sharing some of its properties but being more approachable. Originally, the most interesting feature of the H\'{e}non and Lozi families was the fact that they approximated the Poincar\'{e} map of the Lorenz system \cite{henon1,henon2}, assuming a transversal is properly chosen.  Numerical experiments showed the existence of attractors for certain parameters \cite{henon2, lozi-first-article}. As discrete dynamical systems are significantly more amenable then continuous, H\'{e}non and Lozi families posed a new line of research, ultimately directed towards better understanding of attractors of continuous dynamical systems like Lorenz, R{\"o}ssler \cite{ROSSLER1976397} Chua \cite{chua}. Confining our attention to the Lozi family, in 1980  Misiurewicz \cite{strange-attractor-mis} (see Theorem \ref{misiurewicz-main}) determined a subset of the parameter space, for which $f_{(a,b)}$ is orientation reversing ($b>0$), where for every $(a,b)$ exists a strange attractor for $f_{(a,b)}$, that is an attractor on which $f_{(a,b)}$ is transitive. Later, Misiurewicz and Sonja \v{S}timac improved this result, extending the set of parameters \cite{Lozi-likemaps}. Among other directions of research we can point to the development of kneading theory \cite{Towardsakneading1},\cite{Towardsakneadingtheory2}, relationship to symbolic dynamics \cite{mm}  determining entropy and its continuity or lack of it \cite{Towardsakneadingtheory2}, \cite{discon-lozi-map},\cite{mono-lozi-map}, robustness of chaos and continuity of attractors \cite{glendinning2019robust} or characterisation as inverse limits of trees \cite{densly-branching-trees}. 

It was expected that there exists an orientation preserving counterpart of Misiurewicz's results. Some properties of the orientation preserving Lozi family, that one would be concerned in the proof of existence of strange attractors, has been previously established. Although usually context of said studies was not strange attractors of the Lozi family \textit{per se}. Specifically, hyperbolicity and the existence of invariant cones families \cite{rychlik:invariant-measures,simpson:robust-chaos, simpson:devaney-chaos}, presence of a trapping region, containing the unstable manifold of a fixed point \cite{rychlik:invariant-measures,simpson:robust-chaos,Glendinning2017,cao-liu-98} was considered. The existence of chaotic attractors for the orientation preserving 2-dimensional border-collision normal forms was shown in \cite{simpson:robust-chaos}. It is important to point to some differences between our paper and \cite{simpson:robust-chaos}. In \cite{simpson:robust-chaos} the authors' definition of a chaotic attractor is that of an invariant, compact subset, possessing a trapping region and on which system exhibits Devaney chaos (in particular it is transitive). Proofs in \cite{simpson:robust-chaos} do not show existence of a strange attractor in the sense of definition introduced in Section \ref{sec:attractors}, which is also considered for example in \cite{Lozi-likemaps}. In other words, they do not show that the attractor is maximal. In particular, we do not know if there are other invariant sets in the trapping region of \cite{simpson:robust-chaos}. For a detailed discussion of 2-dimensional border-collision normal form we refer the reader to \cite{simpson:robust-chaos}. While, we were unable to prove that the attractor present in our system is a maximal one for a large, constructively defined parameter region, we do prove that for a certain open set of parameters. The obtained set is in a way similar to counterpart results of the orientation preserving H{\'e}non maps \cite{benedicks-viana:solution-basin-henon,cao-mao:non-wandering-set-of-some-Henon-maps}. One should mention also a paper \cite{cao-liu-lozi} by Yongluo Cao and Zengrong Liu, in which they show that the realm of attraction is the whole trapping region, that is, the attractor is maximal. Unfortunately, it seems that their Proposition 6. contains a gap in reasoning, rendering the argument simply insufficient. Interestingly, this insufficiency is in line with a numerical counterexample provided by Glendinning and Simpson \cite{simpson:robust-chaos}.

Summarizing, Misiurewicz's results about strange attractors were never fully generalized to the orientation preserving case and so the purpose of this paper is to fill this gap. Moreover, as methods and even definitions vary significantly in cited papers, we believe that self-contained proof that is specific to the Lozi family, is of interest to the reader and so we treat all steps in our investigations.
 
 We now outline the Misiurewicz's proof of the existence of strange attractors. Lozi map is not hyperbolic in the usual sense, as it is not everywhere differentiable, but one can show that almost every point has a local unstable and stable manifolds and they can be extended to global manifolds that are broken lines. It allowed Misiurewicz to find an attracting neighbourhood of the unstable manifold of fixed point $X$. Furthermore, the investigation of homoclinic orbits of $X$ lead him to the conclusion that the Lozi map is mixing on the unstable manifold of $X$, ultimately obtaining the following.
\begin{thm}[\cite{strange-attractor-mis, Lozi-likemaps}]\label{misiurewicz-main}
	Let $\Uee_{+}$ be the set of points $(a,b)\in\plane$ fulfilling the following inequalities 
\begin{align*}
	 0<b<1, && 0<a<\dfrac{4-b}{2}, && a>\dfrac{b+2}{\sqrt{2}}.
\end{align*}
	Then the Lozi map $f_{(a,b)}$ has a unique saddle-type fixed point in the right half plane and the closure of the unstable manifold of this point is a strange attractor, on which $f_{(a,b)}$ is topologically mixing.
\end{thm}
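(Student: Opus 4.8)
The plan is to rebuild the hyperbolic picture by hand, exploiting that $f_{(a,b)}$ is piecewise affine, and then run the standard attractor argument: trap the unstable manifold in a bounded region, identify its closure with the maximal invariant set there, and finally promote transitivity to mixing. First I would locate the fixed point. Solving $f_{(a,b)}(x,y)=(x,y)$ on the branch $x>0$ gives the unique right half-plane fixed point $X=\left(\tfrac{1}{1+a-b},\tfrac{b}{1+a-b}\right)$, where $f_{(a,b)}$ has the constant derivative $\left(\begin{smallmatrix} -a & 1 \\ b & 0\end{smallmatrix}\right)$ with characteristic polynomial $\lambda^{2}+a\lambda-b$. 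Its two roots have product $-b<0$, hence opposite signs; since $a+\sqrt{a^{2}+4b}>2$ throughout $\Uee_{+}$, the negative root has modulus $>1$ and the positive one modulus $<1$, so $X$ is a saddle. (The determinant $-b$ being negative is exactly the orientation-reversing regime of this theorem.)

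Second, I would set up invariant cone fields. Off the singular line $\{x=0\}$ the map is affine with one of two constant derivatives (the $x>0$ branch above and its $x<0$ reflection). I would exhibit an unstable cone $C^{u}$, centred on the expanding eigendirection, mapped strictly into itself by both branches, and a complementary stable cone $C^{s}$ preserved by $f_{(a,b)}^{-1}$; the slopes bounding the cones come from the eigenvectors, and the inequalities on $(a,b)$ are what force uniform expansion of $C^{u}$-vectors and contraction of $C^{s}$-vectors. This gives, for almost every point, local stable and unstable manifolds that extend to global broken lines — $\ustmani{X}$ being piecewise linear because its defining segment is refracted each time an image crosses $\{x=0\}$.

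Third, I would construct an explicit bounded trapping region. Following Misiurewicz, I would take a polygon $D$ assembled from images of the unstable eigendirection through $X$ and check directly that $f_{(a,b)}(\cl D)\subset\inter D$; this is where the upper bound $a<\tfrac{4-b}{2}$ enters, controlling how far the folded images reach. Setting $\Aaa_{(a,b)}=\bigcap_{n\ge0} f_{(a,b)}^{\,n}(\cl D)$ yields a compact forward-invariant attractor, and maximality in $D$ is immediate from this definition as a nested intersection of forward images. Since $X\in\inter D$ and $D$ is forward-invariant we get $\ustmani{X}\subset D$; combining the uniform $C^{u}$-expansion with the trapping then forces $\cl{\ustmani{X}}=\Aaa_{(a,b)}$.

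Finally — and this is the main obstacle — I would prove topological mixing. The mechanism is that any nondegenerate arc of $\ustmani{X}$ grows under iteration to a length bounded below independently of the arc (uniform unstable expansion), and once long enough it must cross $\{x=0\}$ and thereafter spread across a definite portion of $\Aaa_{(a,b)}$; feeding in a careful analysis of the homoclinic intersections of $\ustmani{X}$ with $\stmani{X}$ then shows that for any open $U,V$ meeting $\Aaa_{(a,b)}$ there is $N$ with $f_{(a,b)}^{\,n}(U)\cap V\neq\emptyset$ for all $n\ge N$. The delicate points are quantitative: controlling precisely how arcs fold across the singular line, and ruling out a periodic pattern in the covering so as to obtain mixing rather than mere transitivity. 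This is exactly where the expansion/contraction estimates from the cone construction and the explicit geometry of $D$ must be used in tandem, and I expect it to be by far the most demanding step.
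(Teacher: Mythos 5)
Your outline follows the same overall strategy that this paper attributes to Misiurewicz (the theorem is quoted from \cite{strange-attractor-mis,Lozi-likemaps} rather than reproved; the paper's Sections \ref{sec:properties-of-lozi-family}--\ref{sec:main-results} carry out the parallel program for $b<0$): piecewise-affine hyperbolicity via invariant cone fields, an explicit forward-invariant polygon, identification of the attractor with a nested intersection of forward images, and mixing via growth of unstable arcs. Your fixed-point and eigenvalue computations are correct, and your mixing mechanism is the right one; it is the $b>0$ counterpart of Lemma \ref{4}, and the hypothesis $a>(b+2)/\sqrt{2}$ is precisely what makes the unstable expansion factor exceed $\sqrt{2}$, so that a broken unstable arc, whose number of linear pieces at most doubles every two iterates, must eventually contain a subsegment crossing both axes (you left this inequality unattributed, but nothing you wrote contradicts its role).

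The genuine gap is the step where you declare that ``maximality in $D$ is immediate'' and that ``combining the uniform $C^{u}$-expansion with the trapping then forces $\cl\ustmani{X}=\Aaa_{(a,b)}$.'' Defining $\Aaa_{(a,b)}=\bigcap_{n}f^{n}_{(a,b)}(\cl D)$ makes that set maximal by fiat, but the theorem asserts that $\cl\ustmani{X}$ \emph{itself} is the strange attractor, i.e.\ the nontrivial inclusion $\bigcap_{n}f^{n}_{(a,b)}(\cl D)\subset\cl\ustmani{X}$, and uniform cone expansion plus forward invariance does not give this: expansion holds throughout $D$ and cannot distinguish points of $\cl\ustmani{X}$ from points of some other invariant set that a generic trapping polygon might contain. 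This is exactly the issue the paper emphasizes in its introduction: the chaotic attractor of \cite{simpson:robust-chaos} is not known to be maximal in its trapping region, and Cao--Liu's claimed proof of this very identification (for $b<0$) has a gap consistent with a numerical counterexample of Glendinning and Simpson; in the paper's own orientation-preserving case this single step consumes the entire renormalization analysis of Section \ref{sec:main-results}. What Misiurewicz actually uses for $b>0$ is the homoclinic structure of $X$ (available because the stable eigenvalue at $X$ is positive in that regime) to build an attracting neighbourhood whose boundary lies in $\ustmani{X}$ up to controlled pieces; only then does area contraction ($|\det Df_{(a,b)}|=b<1$) squeeze the nested intersection onto $\cl\ustmani{X}$. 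Your region $D$, specified only as ``a polygon assembled from images of the unstable eigendirection through $X$,'' comes with no such boundary property, so as written the concluding identification, which is the core of the theorem in the sense of Definition \ref{def:strange-chaotic-att}, does not follow.
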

We shall extend Theorem \ref{misiurewicz-main} to the orientation preserving case ($b<0$). Similarly to the orientation reversing case, an orientation preserving Lozi map has two fixed points, one of which, $X$, has bounded unstable manifold. A significant difference between those two cases is that in the former, the eigenvalue corresponding to the stable direction of the fixed point is positive, whereas in the latter it is negative. Hence, any orbit on the stable manifold of $X$ accumulates on both sides of it, and in particular one cannot expect to find a homoclinic orbit that would facilitate a construction of a compact attracting neighbourhood of the unstable manifold, that has that manifold as boundary. Instead, we use the unstable manifold of the other fixed point and a heteroclinic orbit to determine the boundary of the attracting region. Note that Cao and Liu \cite{cao-liu-98} propose very similar candidate, but one can compute that their attracting neighbourhood is invariant in much smaller set of parameters, making their choice far from optimal. Apart from the mentioned disadvantage, we closely follow the strategy of proof devised by Misiurewicz. This paper is organized as follows. In Section \ref{sec:prelimiaries} we provide notational conventions and definitions. Section \ref{sec:properties-of-lozi-family} is devoted for basic properties of the Lozi family, such as positions and existence of certain important points. Throughout Section \ref{sec:hyperbolicity} we show that the Lozi family possesses certain hyperbolic properties.\footnote{Note that existence almost everywhere of stable and unstable local manifolds for piecewise affine homeomorphisms was proven by J\'{e}r\^{o}me Buzzi \cite{buzzi}.} We finally use hyperbolicity to prove that Lozi map is mixing on the closure $\cl\ustmani{X}$ of the unstable manifold of the fixed point $X\in\plane$ in Section \ref{sec:main-results}. We finalize our proof of the existence of strange attractors by showing that $\cl\ustmani{X}$ has an attracting neighbourhood of a certain significant form. At this point one can reprove a special case of Theorem 2.3 from \cite{simpson:robust-chaos}, that is we show.
\begin{theorem}\label{thm:mainA}
	Let  \begin{multline*}
	\Uee_{-}=\{(a,b)\in\plane\colon \sqrt{2}<a<2\\\text{ and  }\max\{\frac{1}{8} \left(3 a^2-8 a+ \sqrt{9 a^4-16 a^3}\right),2-\sqrt2 a\} <b<0\}.
	\end{multline*} Then for every $(a,b)\in\Uee_{-}$ the Lozi map $f_{(a,b)}$ has a unique saddle-type fixed point in the lower half plane and the closure of the unstable manifold of this point is a chaotic attractor.
\end{theorem}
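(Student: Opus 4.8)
The plan is to establish the three defining features of a chaotic attractor for $A:=\cl\ustmani{X}$: that $A$ is compact and $f_{(a,b)}$-invariant, that $f_{(a,b)}|_{A}$ is topologically mixing (so that $A$ is transitive and, together with density of periodic points, Devaney chaotic), and that $A$ admits a trapping region. Fix $(a,b)\in\Uee_{-}$; I would draw freely on the hyperbolicity package of Section~\ref{sec:hyperbolicity} --- invariant stable and unstable cone fields with uniform expansion and contraction rates, and the attendant description of $\ustmani{X}$ as a locally finite broken line. Solving $f_{(a,b)}(x,bx)=(x,bx)$ on $\{x>0\}$ and on $\{x<0\}$ gives, as in Section~\ref{sec:properties-of-lozi-family}, the point $X=\left(\tfrac{1}{1+a-b},\tfrac{b}{1+a-b}\right)$ in the fourth quadrant --- hence in the lower half plane, since $b<0$ --- and a second fixed point $Y$ in the left half plane. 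On $\{x>0\}$ the derivative is $\left(\begin{smallmatrix}-a&1\\b&0\end{smallmatrix}\right)$, with real negative eigenvalues $\lambda_{\pm}=\tfrac12\!\left(-a\pm\sqrt{a^{2}+4b}\right)$; I would check that $b>2-\sqrt2\,a$ forces $a+b>1$ on the range $\sqrt2<a<2$, whence $|\lambda_{-}|>1>|\lambda_{+}|$, so $X$ is a saddle. The sign $\lambda_{+}<0$ is the structural feature that separates this case from Misiurewicz's.

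For the mixing step I would follow Misiurewicz: show that the forward images of any short unstable arc eventually become dense in $A$. The unstable cone field forces $\ustmani{X}$ to cross the fold line $\{x=0\}$, and the expansion $|\lambda_{-}|>1$ makes the lengths of its linear pieces grow, while the condition $b>2-\sqrt2\,a$ --- the orientation-preserving analogue of Misiurewicz's $a>(b+2)/\sqrt2$ --- is what guarantees a transverse homoclinic intersection of $\ustmani{X}$ with $\stmani{X}$. Because $\lambda_{+}<0$, orbits on $\stmani{X}$ approach $X$ alternating sides, so I cannot reproduce Misiurewicz's one-sided homoclinic argument directly; instead I would combine the heteroclinic structure linking $X$ and $Y$ with an inclination-lemma argument to conclude that every subarc spreads over all of $\ustmani{X}$ under iteration, giving mixing of $f_{(a,b)}|_{A}$ and density of periodic orbits.

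The construction of the trapping region is the crux and the genuine point of departure from the orientation-reversing theory. The plan is to cut out a compact region $N$ whose boundary is assembled from a segment of the unstable manifold $\ustmani{Y}$ of the second fixed point together with a heteroclinic segment, arranged so that $A\subseteq N$ and $f_{(a,b)}(N)\subseteq\inter N$. Since $f_{(a,b)}$ is piecewise affine, checking $f_{(a,b)}(N)\subseteq\inter N$ reduces to tracking the images of finitely many boundary edges and of their breakpoints on $\{x=0\}$; the decisive vertices land inside $N$ precisely when $b>\tfrac18\!\left(3a^{2}-8a+\sqrt{9a^{4}-16a^{3}}\right)$, which is how that bound enters. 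Forward invariance then keeps $\ustmani{X}$ bounded and inside $N$, so $A\subseteq N$ is compact and invariant, and the contraction along the stable cone direction drives every point of $N$ toward $A$, exhibiting $N$ as a trapping region; with the previous step this yields that $A$ is a chaotic attractor.

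I expect the trapping region to be the main obstacle. Because the stable eigenvalue is negative, the homoclinic ``boundary'' exploited by Misiurewicz oscillates across $\stmani{X}$ and cannot bound a forward-invariant set, which forces the use of $\ustmani{Y}$ and a heteroclinic orbit instead; verifying $f_{(a,b)}(N)\subseteq\inter N$ is then a finite but parameter-sensitive computation, and it is exactly this computation that produces the cumbersome lower bound on $b$ and that a less careful candidate region --- such as the one of Cao and Liu \cite{cao-liu-98} --- satisfies only on a much smaller parameter set.
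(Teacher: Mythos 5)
Most of your outline tracks the paper's actual route: the fixed-point computation, the cone-field hyperbolicity of Section \ref{sec:hyperbolicity}, the Misiurewicz-style mixing argument, and above all the invariant region bounded by a piece of $\ustmani{Y}$ and a heteroclinic segment --- this is exactly the paper's triangle $G=\Delta YAZ$ (Lemma \ref{DeltaYAZisINVARIANT}), and you correctly locate the cumbersome bound $b>\frac{1}{8}\left(3a^{2}-8a+\sqrt{9a^{4}-16a^{3}}\right)$ in the verification of its invariance (it enters through Lemma \ref{BliesaboveWsY}, i.e.\ the position of $B=f(A)$ relative to $\stmani{Y}$). Two smaller inaccuracies: first, $b>2-\sqrt{2}a$ does not ``guarantee a transverse homoclinic intersection''; it is equivalent to $\beta>\sqrt{2}$ and is used in Lemma \ref{4} so that the expansion $\beta^{2}$ over two iterates beats the at-most-doubling of the number of linear pieces of an iterated unstable arc, which is what forces arcs to grow until they cross both axes. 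Second, a region whose boundary lies in $\ustmani{Y}\cup\stmani{Y}$ can be forward invariant but never strictly trapping: $Y$ and the invariant-manifold arcs in $\fr G$ map into $\fr G$, so $f(N)\subseteq\inter N$ is unattainable for such an $N$; the paper only claims $f(G)\subseteq G$, and builds a genuine open trapping neighbourhood much later and only on $\Uee_{\text{tan}}$.

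The genuine gap is your closing step: ``the contraction along the stable cone direction drives every point of $N$ toward $A$.'' Nothing in the hyperbolicity package yields this. The splitting exists only at points whose orbits avoid the switching line (hence only almost everywhere), and, more fundamentally, contraction inside stable cones says that orbits of nearby points converge \emph{to each other}, not that they converge to $\cl\ustmani{X}$; it is entirely compatible with the region containing further invariant sets disjoint from $\cl\ustmani{X}$. This is precisely the maximality problem that the paper singles out as the hardest step: it is \emph{not} proved on $\Uee_{-}$, but only on the smaller region $\Uee_{\text{tan}}$ (Theorem \ref{thm:mainB}), and only via Ou's renormalization of the first return map to $H_{0}$; the introduction even records that Cao and Liu attempted exactly your claim, that their Proposition 6 has a gap, and that this is consistent with a numerical counterexample of Glendinning and Simpson. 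Note that Theorem \ref{thm:mainA} does not need your claim: by Definitions \ref{def:attractor} and \ref{def:strange-chaotic-att} a chaotic attractor requires only a realm of attraction of positive Lebesgue measure together with minimality, and minimality follows from transitivity (a point of $A$ with dense orbit lies in $\rho(A)=\rho(A')$, forcing $A'\supseteq A$), while the positive-measure realm rests on the almost-everywhere existence of local stable manifolds rather than on attraction of the whole region. Your proposal also leaves density of periodic points to an inclination-lemma hand-wave, which mixing alone does not supply. As written, the last step of your plan would fail, and repairing it means replacing ``every point of $N$ is attracted'' by these weaker statements.
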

Then, we apply renormalization technique to reason that the attractor is maximal in the given neighbourhood. This step is the hardest one and is based on a detailed analysis of a first return map to a certain region of the phase space. We are not aware of any paper that shows maximality or uniqueness of the attractor of the Lozi family. Moreover, it seems that tools used in works mentioned earlier are by far insufficient to tackle such a problem. We show the following.
\begin{theorem}\label{thm:mainB}
	One can find an open parameter region $\Uee_{\text{tan}}\subset\Uee_{-}$ accumulating on $(2,0)$ so that $\cl\ustmani{X}$ is a strange attractor whenever $(a,b)\in \Uee_{\text{tan}}$. Additionally, in $\Uee_{\text{tan}}$ attractors change continuously with respect to the Hausdorff metric.
\end{theorem}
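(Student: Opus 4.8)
The statement has two independent parts, and I would take $N=N(a,b)$ to be the attracting neighbourhood constructed in the proof of Theorem \ref{thm:mainA}, so that $f_{(a,b)}(N)\subset\inter N$ and the maximal invariant set $\Lambda:=\bigcap_{n\ge 0}f_{(a,b)}^{n}(N)$ satisfies $\cl\ustmani{X}\subseteq\Lambda$. Since topological mixing (hence transitivity) on $\cl\ustmani{X}$ is already supplied by Theorem \ref{thm:mainA} on all of $\Uee_{-}\supseteq\Uee_{\text{tan}}$, the only new content in the first assertion is \emph{maximality}, i.e. the reverse inclusion $\Lambda\subseteq\cl\ustmani{X}$, which is exactly the extra condition distinguishing a strange attractor from a chaotic one (cf. Section \ref{sec:attractors}).

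To prove maximality I would isolate where an orbit could fail to approach $\cl\ustmani{X}$. As $f_{(a,b)}$ is piecewise affine and uniformly hyperbolic on $N$ (Section \ref{sec:hyperbolicity}), a point of $\Lambda\setminus\cl\ustmani{X}$ must have a full orbit bounded away from $\ustmani{X}$, and the geometry of $N$ (bounded by unstable arcs of the second fixed point together with a heteroclinic segment) confines such an orbit to return infinitely often to a distinguished region $R\subset N$ that the unstable manifold does not thread. The crux is the \emph{first return map} $F=f_{(a,b)}^{\tau}\colon R\to R$, where $\tau$ is the first return time: since $f_{(a,b)}$ is affine on each side of $\{x=0\}$, $F$ is affine on each atom of the induced partition of $R$, and by tracking how the seed segment folds once across $\{x=0\}$ before returning one verifies the key identity $\Phi\circ F\circ\Phi^{-1}=f_{(a',b')}$, where $\Phi$ is the affine rescaling normalising $R$ and $(a',b')=\mathcal{R}(a,b)$ are renormalised parameters.

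I would then take $\Uee_{\text{tan}}$ to be the set of $(a,b)\in\Uee_{-}$ for which $\mathcal{R}$ is well defined and its forward orbit $\{\mathcal{R}^{k}(a,b)\}_{k\ge 0}$ stays inside a compact part of $\Uee_{-}$; the label "tan" records that admissibility is governed by a transversality (non-tangency) condition on where the folded image of $R$ meets $\partial R$. Openness of $\Uee_{\text{tan}}$ follows from joint continuity of $\mathcal{R}$ and stability of the return combinatorics, and accumulation at $(2,0)$ is checked by expanding near the degenerate limit $a\to 2,\ b\to 0^{-}$, where $f_{(a,b)}$ collapses to the full tent map $x\mapsto 1-2|x|$: here $\Uee_{-}$ itself already pinches down onto $(2,0)$ and the non-tangency condition holds on an open sliver, furnishing admissible parameters arbitrarily close to $(2,0)$. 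Granting all this, the conjugacy $\Phi\circ F\circ\Phi^{-1}=f_{(a',b')}$ reduces the existence of extra dynamics in $R$ to the same question for $f_{(a',b')}$; iterating along the $\mathcal{R}$-orbit then drives the return points of any hypothetical orbit in $\Lambda\setminus\cl\ustmani{X}$ into a nested, geometrically shrinking sequence of renormalisation domains whose intersection lies in $\cl\ustmani{X}$, contradicting that the orbit avoids it, whence $\Lambda=\cl\ustmani{X}$. I expect the identity $\Phi\circ F\circ\Phi^{-1}=f_{(a',b')}$ and the verification that $\mathcal{R}$ preserves the admissible region to be the principal obstacle, as both rest on delicate bookkeeping of return times and of the exact linearity domains of $F$ near the corner of $R$.

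For the continuity statement I would exploit that $\cl\ustmani{X}$ is the closure of the increasing union $\bigcup_{n\ge 0}A_n$, where $A_0$ is a local unstable segment with $A_0\subset f_{(a,b)}(A_0)$ and $A_n:=f_{(a,b)}^{n}(A_0)$. Because $f_{(a,b)}(x,y)=(1+y-a|x|,bx)$ is jointly continuous in $(a,b,x,y)$ and the fixed point and unstable eigendata vary continuously, each compact arc $A_n$ varies continuously in the Hausdorff metric. Given $\epsilon>0$, compactness of $\cl\ustmani{X}$ yields a finite $n$ with $A_n$ being $\tfrac{\epsilon}{2}$-dense, and continuity of $A_n$ then gives lower semicontinuity of $(a,b)\mapsto\cl\ustmani{X}$; upper semicontinuity of $(a,b)\mapsto\Lambda$ is standard, following from robustness of the trapping condition $f_{(a,b)}(N)\subset\inter N$. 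Since maximality gives $\Lambda=\cl\ustmani{X}$ throughout $\Uee_{\text{tan}}$, the two one-sided estimates combine to full Hausdorff continuity of the attractor.
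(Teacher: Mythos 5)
Your continuity argument (lower semicontinuity from compact arcs $A_{n}\subset\ustmani{X}$ varying continuously, upper semicontinuity from robustness of the trapping condition) is sound, and it is genuinely different from the paper's route, which instead verifies the uniform-convergence hypothesis of Glendinning--Simpson (Lemma \ref{glen-key}) by an area estimate using $|\det Df_{\mu}|=|b|$. But continuity is the easy half of the theorem; the maximality half, which is the entire point of Theorem \ref{thm:mainB}, has a fatal gap. Your key identity $\Phi\circ F\circ\Phi^{-1}=f_{(a',b')}$ is not only unproven, it cannot hold as stated. By the structure of the first return map $h$ to $H_{0}$ (Proposition \ref{prop:renormalization-main}, taken from Ou's model), $h$ has return-time domains $C_{2},\dots,C_{p}$ on which $h=f^{n}$, and each $C_{n}$ splits into two affinity components $C_{n}^{l}$, $C_{n}^{r}$; thus $h$ has $2(p-1)$ pieces of affinity, whereas any Lozi map has exactly two, so no single affine change of coordinates can conjugate the return map to some $f_{(a',b')}$. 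With no such conjugacy there is no renormalization operator $\mathcal{R}$ on parameter space to iterate, and the induction ``reduce extra dynamics in $R$ to the same question for $f_{(a',b')}$'' collapses. Even granting $\mathcal{R}$, two further steps fail: (i) your $\Uee_{\text{tan}}$ is defined by requiring the \emph{entire} forward $\mathcal{R}$-orbit to stay in a compact part of $\Uee_{-}$, an intersection of infinitely many conditions, and openness does not follow from continuity of $\mathcal{R}$; (ii) the final claim that the nested renormalization domains intersect inside $\cl\ustmani{X}$ is exactly the statement of maximality and is nowhere argued. There is also a flaw at the outset: a point of $\Lambda\setminus\cl\ustmani{X}$ need not have orbit bounded away from $\cl\ustmani{X}$ (points whose forward orbit converges to the attractor while their backward orbit remains in the trapping region also lie in $\Lambda=\bigcap_{n}f^{n}(N)$), so the reduction to orbits ``bounded away from $\ustmani{X}$'' already loses the cases that matter.

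For comparison, the paper uses only \emph{one} level of renormalization and no operator on parameters. Lemma \ref{lem:prelim-tangency-param} (a consequence of Ou's Proposition 6.1 plus continuity) produces balls $B_{n}=\ball{a_{n},0}{\epsilon_{n}}\cap\Uee_{-}$ with $a_{n}\to2$, $\epsilon_{n}\to0$, on which the return combinatorics is one of two explicit near-tangency cases \ref{tangency-c1}, \ref{tangency-c2}; setting $\Uee_{\text{tan}}=\bigcup_{n}B_{n}$ makes openness and accumulation at $(2,0)$ immediate (you guessed correctly that ``tan'' refers to tangency, but the set is pinned to the tangency combinatorics, not to an invariance condition for a parameter dynamics). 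Maximality is then proved directly: every set $\bigcap_{k}h^{i_{k}}(C_{t_{k}})$ is shown to lie in $\cl\ustmani{X}$, either because the relevant images $h(C_{j})$ fall into discs bounded by arcs of $\ustmani{X}$ and $\stmani{X}$, whose $\omega$-limit sets lie in $\cl\ustmani{X}$, or, for symbol sequences staying forever in $C_{p-1}$ or $C_{p}$, because the successive intersections are unions of rectangles whose opposite sides are joined by arcs of $\ustmani{X}$ and whose areas shrink under the area-contracting $h$. If you want to salvage your proposal, this direct geometric argument is what has to replace the conjugacy-and-iteration scheme.
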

Lastly, in Section \ref{sec:continuity} we show that attractors vary continuously with respect to the Hausdorff metric.
\begin{theorem}\label{thm:mainC}
	In $\Uee_{\text{tan}}$ attractors change continuously with respect to the Hausdorff metric.
\end{theorem}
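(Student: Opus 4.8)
The plan is to prove that $(a,b)\mapsto\Aaa_{(a,b)}:=\cl\ustmani{X}$ is continuous from $\Uee_{\text{tan}}$ into the space $\mathcal K(\plane)$ of nonempty compact sets equipped with the Hausdorff metric $d_H$, by verifying lower and upper semicontinuity separately. Two structural facts drive the two halves. First, the attractor is the closure of the unstable manifold, $\Aaa_{(a,b)}=\cl\left(\bigcup_{m\ge0}L^{m}_{(a,b)}\right)$, an increasing union of the compact broken-line arcs $L^{m}_{(a,b)}:=f_{(a,b)}^{m}(\ustlocmani{X})$. Second, by Theorem \ref{thm:mainB} the attractor is the maximal invariant set of a trapping region $T_{(a,b)}$, so that $\Aaa_{(a,b)}=\bigcap_{n\ge0}f_{(a,b)}^{n}(T_{(a,b)})$. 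The first identity drives lower semicontinuity and the second drives upper semicontinuity.

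The first step is to show that each finite arc $L^{m}_{(a,b)}$ varies continuously in $d_H$. The fixed point $X=X_{(a,b)}$ and its unstable eigendirection are algebraic functions of $(a,b)$, hence continuous, and since $f_{(a,b)}$ is affine near $X$ the local segment $\ustlocmani{X}$ of fixed length depends continuously on the parameter. As $(a,b,x,y)\mapsto f_{(a,b)}(x,y)$ is jointly continuous, so is its $m$-fold iterate, whence $L^{m}_{(a,b)}=f^{m}_{(a,b)}(\ustlocmani{X})$ is $d_H$-continuous in $(a,b)$ for each fixed $m$; the changing number of folds of the arc along $\{x=0\}$ does not affect continuity of the underlying set. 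Lower semicontinuity of $\Aaa_{(a,b)}$ is then immediate: given $p\in\Aaa_{(a,b)}$ and $\varepsilon>0$, choose $m$ and $p'\in L^{m}_{(a,b)}$ with $|p-p'|<\varepsilon/2$, and use $d_H$-continuity of $L^m$ to produce, for all $(a',b')$ near $(a,b)$, a point of $L^{m}_{(a',b')}\subseteq\Aaa_{(a',b')}$ within $\varepsilon/2$ of $p'$.

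Upper semicontinuity is where maximality is essential. Let $(a_k,b_k)\to(a,b)$ in $\Uee_{\text{tan}}$ with $x_k\in\Aaa_{(a_k,b_k)}$ and $x_k\to x$; I must show $x\in\Aaa_{(a,b)}$. Fix $n$ and, using $\Aaa_{(a_k,b_k)}=\bigcap_{j}f^{j}_{(a_k,b_k)}(T_{(a_k,b_k)})$, write $x_k=f^{n}_{(a_k,b_k)}(y_k)$ with $y_k\in T_{(a_k,b_k)}$. Since the trapping regions are built from continuously varying unstable-manifold and heteroclinic data, $T_{(a_k,b_k)}\to T_{(a,b)}$ in $d_H$, so a subsequence of $(y_k)$ converges to some $y\in T_{(a,b)}$, and joint continuity of the iterate gives $x=f^{n}_{(a,b)}(y)\in f^{n}_{(a,b)}(T_{(a,b)})$. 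A diagonal argument over $n$ then yields $x\in\bigcap_{n\ge0}f^{n}_{(a,b)}(T_{(a,b)})=\Aaa_{(a,b)}$.

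The main obstacle is precisely this upper-semicontinuity step and its dependence on Theorem \ref{thm:mainB}: without maximality one has only $\Aaa_{(a,b)}\subseteq\bigcap_n f^n(T_{(a,b)})$, and limits of points $x_k$ could in principle escape the unstable manifold into another invariant piece of the trapping region, the very failure expected outside $\Uee_{\text{tan}}$. A secondary technical point to be treated with care is the $d_H$-continuity of $T_{(a,b)}$ itself (a polygon whose vertices are determined by $X_{(a,b)}$, its unstable manifold, and a heteroclinic orbit), which legitimizes the compactness argument producing $y\in T_{(a,b)}$. Granting these, combining lower and upper semicontinuity gives $d_H$-continuity of $(a,b)\mapsto\Aaa_{(a,b)}$ throughout $\Uee_{\text{tan}}$.
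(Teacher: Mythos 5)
Your proposal is correct in substance, but it takes a genuinely different route from the paper's. The paper proves Theorem \ref{thm:mainC} by reducing it to a criterion of Glendinning and Simpson (Lemma \ref{glen-key}): taking the compact trapping neighbourhoods $H_{\mu}$ (conditions \ref{A1} and \ref{A2}, with maximality supplied by Corollary \ref{trapping-region}), it remains to verify that $\dH{f^{n}_{\mu}(H_{\mu})}{\Aaa_{\mu}}\to 0$ \emph{uniformly} in $\mu$, and this is done quantitatively: since the Jacobian of $f_{\mu}$ equals $b$, one has $\lebmes{f^{n}_{\mu}(H_{\mu})}\leq |b|^{n}K_{\mu}$, so no disc of radius exceeding $\sqrt{K_{\mu}/\pi}\,|b|^{n/2}$ fits inside $f^{n}_{\mu}(H_{\mu})$, which yields the explicit rate $\dH{\Aaa_{\mu}}{f^{n}_{\mu}(H_{\mu})}\leq 2\sqrt{K_{\mu}/\pi}\,|b|^{n/2}$, bounded uniformly on a closed neighbourhood of $\mu_{0}$. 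You instead split continuity into its two halves and treat them by different mechanisms: lower semicontinuity by inner approximation of $\Aaa_{\mu}=\cl\ustmani{X}$ by the arcs $f^{m}_{\mu}(\ustlocmani{X})$, which uses only that the attractor is the closure of the unstable manifold and nothing about maximality; and upper semicontinuity by a soft, rate-free compactness argument from $\Aaa_{\mu}=\bigcap_{n}f^{n}_{\mu}(H_{\mu})$ together with one half of the continuity of $H_{\mu}$. The paper's route buys an explicit geometric convergence rate and the reuse of an off-the-shelf lemma; yours is more elementary and self-contained, and it isolates precisely where Theorem \ref{thm:mainB} enters --- only in the upper bound, exactly as you emphasize. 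Two details need tightening. First, in your lower-semicontinuity step the index $m$ depends on the point $p$, so before moving the parameter you should use compactness of $\Aaa_{(a,b)}$ and the monotonicity $L^{m}_{(a,b)}\subset L^{m+1}_{(a,b)}$ to fix a single $m$ with $\Aaa_{(a,b)}$ contained in the $\varepsilon/2$-neighbourhood of $L^{m}_{(a,b)}$; this is a one-line fix. Second, the Hausdorff continuity of the trapping regions, which you defer, is precisely the paper's condition \ref{A2}; the paper likewise asserts it without proof (it follows since $H_{\mu}$ is a finite union of polygons $f^{j}_{\mu}(H_{0})$, $0\leq j\leq p$, whose vertices vary continuously), so on this point you are on equal footing with the paper.
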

\begin{figure}[H]
	\caption{Attractor for the parameters $(1.78,-0.5)$.}
	\centering
	\includegraphics[width=\textwidth]{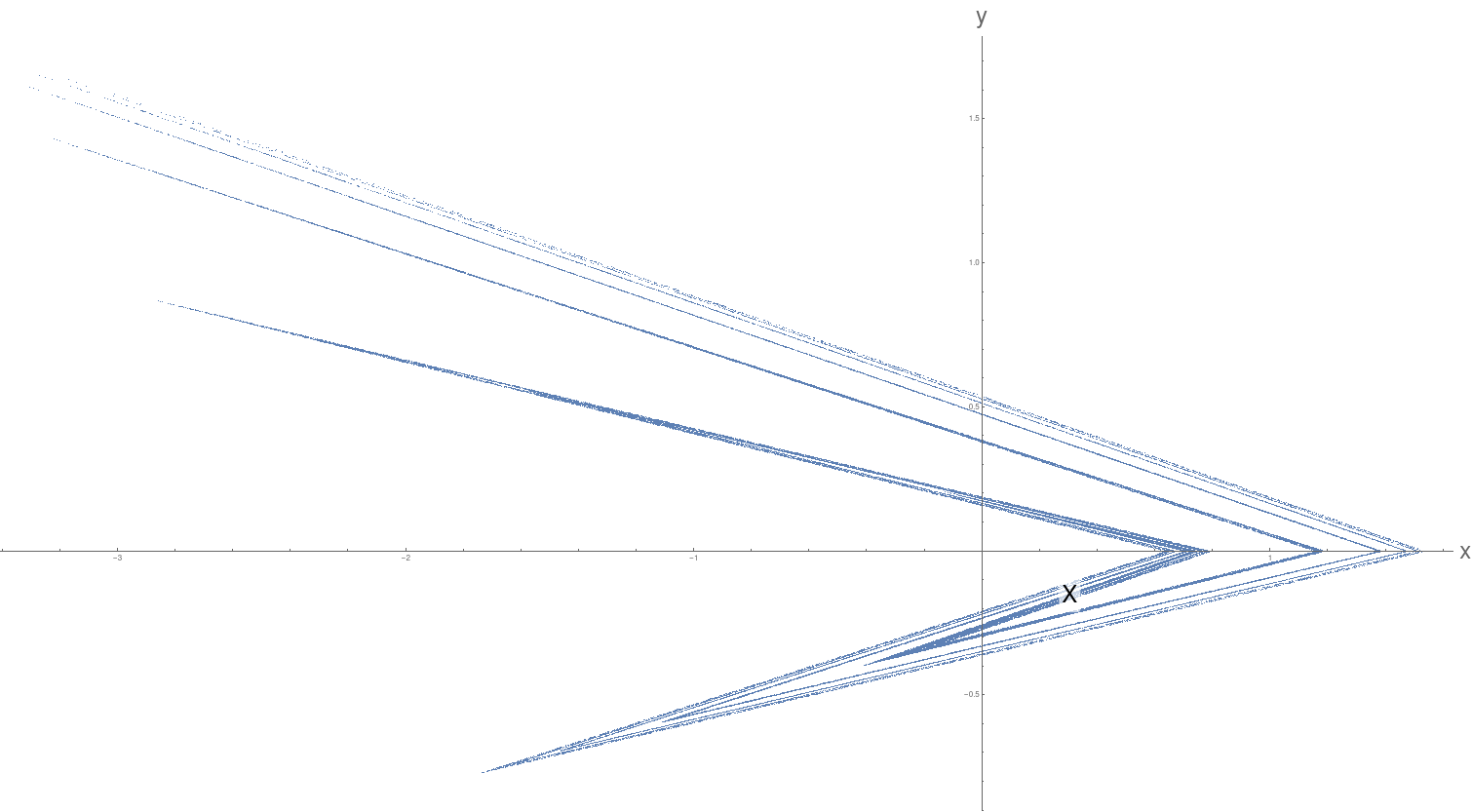}
	\label{fig:attractor}
\end{figure}
\section{Preliminaries}\label{sec:prelimiaries}
\subsection{Notational convention}
Real plane will be denoted by $\plane$. For a set $U\subset \plane$ we denote the interior $\inter U$, boundary $\fr U$ and closure $\cl U=\clu U$. Metric will be denoted by $d$. We will say that $U$ does not separate $\plane$ if $\plane\setminus U$ is connected. Any open (closed) superset $W$ of $U$ will be called an open (closed) neighbourhood of $U$. Directly constructed points will be denoted by capital letters, while points that serve as variables will be denoted by lowercase letters. The set $\nat=\{0,1,...\}$ is a set of natural numbers. Points defined explicitly using parameters will be denoted by capital letters. For a point $p\in \plane$, we let $p_{x}$ and $p_{y}$ be the $x$-coordinate and $y$-coordinate respectively. A line in the real plane $\plane$ defined by the formula $y=ux+v$ will be called positively directed, if $u>0$. For two points $q,r\in \plane$ we will denote by $\overline{q r}$ a segment with $q$, $r$ as endpoints. The Lebesgue measure of $A\subset\plane$ will be denoted by $\lebmes{A}$.
\subsection{Attractors}\label{sec:attractors}
As there is no agreement on the definition of an attractor in literature, to avoid any confusion we state it explicitly. We follow the definition introduced by John Milnor \cite{milnor:on-the-concept-of-attractor} Let $h\colon X\to X$ be a continuous map on a topological space $X$, and $F\subset X$ be a closed subset. We define the realm of attraction $\rho(F)$ of $F$ as all those points $x\in X$ such that $\omega(x)\subset F$. 
\begin{dfn}\label{def:attractor}
	A subset $A\subset X$ will be called an attractor if it satisfies the following\begin{enumerate}[label=(A\arabic*)]
		\item its realm of attraction $\rho(A)$ is of positive Lebesgue measure,
		\item for any closed $A'\subset A$ with $\rho(A')=\rho(A)$ we must have $A'=A$
	\end{enumerate}
\end{dfn}
Some authors require from the attractor additional properties. Nonetheless, we will be working with the above definition, as it allows wide variety of subsets to be attractors and so usually does not discriminate other definitions in use. Main object in this paper is an attractor that exhibits some chaotic properties. In papers on border collision normal form \cite{simpson:robust-chaos} and Lozi-like maps \cite{Lozi-likemaps} relevant to results in this article, there are two definitions of attractor with chaotic properties used. Let us first specify a few notions. An open subset $U\subset X$ will be called a trapping region if $h(\cl U)\subset U$. A compact set $F\subset X$ contained in a trapping region $U$ and satisfying $\bigcap_{n\in\nat}h^{n}(U)=F$ will be called maximal in $U$. A map $h$ is said to be transitive if for any open sets $W,V\subset X$ we can find $i\in\nat$ such that $h^{i}(W)\cap V\neq\emptyset$. On the other hand, if $h^{i}(W)\cap V\neq\emptyset$ holds for almost every $i\in\nat$, then $h$ is said to be mixing. We are ready to state.
\begin{dfn}\label{def:strange-chaotic-att}
	A compact, invariant subset $A\subset X$ will be called a strange attractor if it has a trapping region in which it is maximal. On the other hand, if $A\subset$ is an attractor such that $h|_{A}$ is transitive and periodic points are dense in $A$, $A$ will be called a chaotic attractor and said to have Devaney chaos.
\end{dfn}
\subsection{Hyperbolicity}
Let us recall the definition of the Lozi family. For $a,b\in\real$ we define $f(x,y)=(1+y-a|x|,bx)$. In Section \ref{sec:hyperbolicity} we prove that every Lozi map in a certain subset of the parameter space has a hyperbolic splitting, at points for which it is differentiable at every point of their orbit. It will follow that almost everywhere one can find a stable and unstable local linear manifolds, which in turn can be extended to manifolds of infinite lengths. These manifolds are broken segments, hence they are not differentiable at every point. Let us introduce the necessary definitions.
Following \cite{strange-attractor-mis}, for a homeomorphism $g$ of a metric space $M$ onto itself we define a local stable  (respectively, unstable) manifold at a point $q$ as 
\begin{multline*}
 W^{s}_{q,\epsilon,loc}=\\\{r\in M:\lim_{n\to\infty}d(g^{n}(q),g^{n}(r))=0\text{ and }d(g^{i}(q),g^{i}(r))<\epsilon\text{ for every } i\in\nat\} 
\end{multline*}and
\begin{multline*}
W^{u}_{q,\epsilon,loc}=\\\{r\in M:\lim_{n\to\infty}d(g^{-n}(q),g^{-n}(r))=0\text{ and }d(g^{-i}(q),g^{-i}(r))<\epsilon\text{ for every } i\in\nat\} 
\end{multline*}for some $\epsilon>0$. Whenever we will say that a local stable (unstable) manifold with certain properties exists, we will assume that one can find $\epsilon>0$ for which the manifold exists, dropping usually $\epsilon$ in notation. In this way, by a local linear stable (unstable) manifold it is meant a stable (unstable) manifold with $\epsilon>0$ for which the manifold is a segment. The global stable (unstable) manifold at a point $q$ is defined as $W_{q}^{s}=\bigcup_{n\in\nat}g^{-n}(W^{s}_{g^{n}(q),loc})$ ($W_{q}^{u}=\bigcup_{n\in\nat}g^{n}(W^{u}_{g^{-n}(q),loc})$). Note that global manifolds might not be an embedded manifolds in general. Let us recall that the $\omega$-limit set $\omegalim{S}$ of a set $S$ is defined as \[\omega(S)=\{x\in X\colon x=\lim_{i\to\infty}g^{n_{i}}(x_{i})\text{ for some sequences }\{n_{i}\}_{i\in\nat}\in \nat_{0}^{\nat}\text{ and }\{x_{i}\}_{i\in\nat}\in X^{\nat}\}.\]

\subsection{The parameter set}
As in \cite{strange-attractor-mis} we will consider various assumptions, which we shall list below, whereas not in every lemma every assumption is needed, and some of them are stronger than others. The functions in \ref{c3} and \ref{c6} are considered whenever they are defined in $\real$.
\begin{enumerate}[label=(C\arabic*)]
	\item \label{c-1} $ 1<a\leq2 $, $ -1<b<0 $ and $ a+b > 1 $,
	\item \label{c2} $1<a\leq 2\text{ and }\max\{1-a,(1-2a)/4\}< b\leq 0$,
	\item \label{c3} $1<a< 2\text{ and }\max\{1-a,(1-2a)/4,\frac{1}{8} \left(3 a^2-8 a+ \sqrt{9 a^4-16 a^3}\right) \}< b< 0$,
	\item \label{c5} $\sqrt{2}<a<2$ and $2-\sqrt2 a <b<0 $,
	\item \label{c6} $\sqrt{2}<a<2$ and $\max\{\frac{1}{8} \left(3 a^2-8 a+ \sqrt{9 a^4-16 a^3}\right),2-\sqrt2 a\} <b<0 .$
\end{enumerate}
Let us investigate how bounds on the parameter $b$ correlate with each other. Put $p_{1}(a)=1-a$, $p_{2}(a)=\frac{1}{4} (1-2 a)$, $p_{3}(a)=\frac{1}{8} \left(3 a^2-8 a+ \sqrt{9 a^4-16 a^3}\right)$ and $p_{4}(a)=2-\sqrt{2}a$. Relations of $p_{1},$ $p_{2}$,  $p_{3}$ and $p_{4}$ are shown on the figure \ref{fig:final} and can be summarised in the following fashion
\begin{center}
	\begin{tabular}{ c|c } 
		$ 1<a< \frac{3}{2}$ & $p_{1}(a)>p_{2}(a)$  \\
		\hline
		$ \frac{3}{2}<a<\frac{16}{9}$ & $p_{2}(a)>p_{1}(a)$  \\ 
		\hline
		$ \frac{16}{9}< a<2$ & $p_{3}(a)>p_{2}(a)>p_{1}(a)$  \\ 
		\hline
		$  \sqrt{2}<a<2$ & $\max\{p_{3}(a),p_{4}(a)\}>\max\{p_{1}(a),p_{2}(a)\}$  \\ 
	\end{tabular}
\end{center}
The set satisfying \ref{c6} is depicted as shaded region on Figure \ref{fig:final}. The two dashed lines are $a=3/2$ and $a=16/9$.
\begin{figure}[h]
	\caption{Bounds on the parameter $b$}
	\centering
	\includegraphics[width=\textwidth]{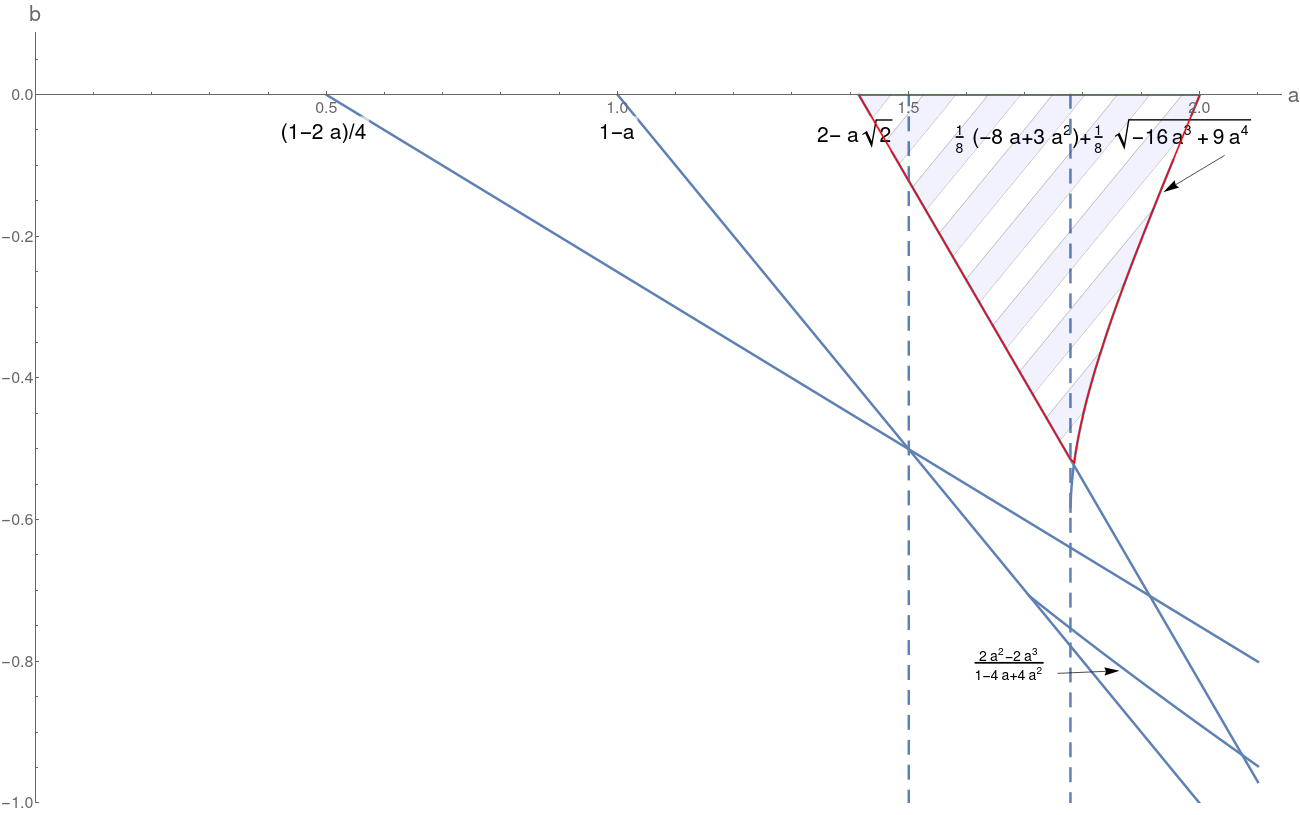}
	\label{fig:final}
\end{figure}

\section{Basic properties of the Lozi family}\label{sec:properties-of-lozi-family}
Throughout the rest of this paper we will often refer to properties and positions of points established in this section. It will be helpful to commit Figure \ref{fig:trap-reg} to memory. Let us assume \ref{c-1}. Since $-\frac{a^2}{4}<1-a< b$ for $1<a\leq2$, \ref{c-1} implies $-\frac{a^2}{4}<b\leq0$. Lozi map has two fixed points $X=(1/(1+a-b),b/(1+a-b))$ and $Y=(-1/(a+b-1),-b/(a+b-1))$, located in the second and fourth quadrant. They are hyperbolic with eigenvalues of $Df$ being \[ \alpha~\text{ and  }\beta\text{ at }Y,\]
 and 
\[ -\alpha ~\text{ and } -\beta \text{ at }X, \]where $\beta=\dfrac{a+\sqrt{a^2 +4b}}{2}$ and $\alpha=-\dfrac{b}{\beta}=\dfrac{a-\sqrt{a^2 +4b}}{2}$. Note that $$0<\alpha<1<\beta$$ for $(a,b)$ satisfying \ref{c-1}. The eigenvector corresponding to an eigenvalue $\lambda$ is $(\lambda,b).$ Draw a line, starting at $Y$ in the direction of the unstable manifold of $Y$, that is $(\beta,b)$. It will intersect the $y$-axis at the point $\xi$ in the upper half plane and the $x$-axis at the point $A=\left(Y_{x}(1-\beta),0\right)$ in the right half plane, note that $f(\xi)=A$. Denote $B=f(A)$ and note that $B$ lies in the lower half plane. Let $M$ be the point of intersection of the $y$-axis with the line containing $\seg{AB}$. 
\begin{figure}[h]
	\caption{Triangle $G=\Delta YAZ$ and $f(G)$.}
	\centering
	\includegraphics[width=\textwidth]{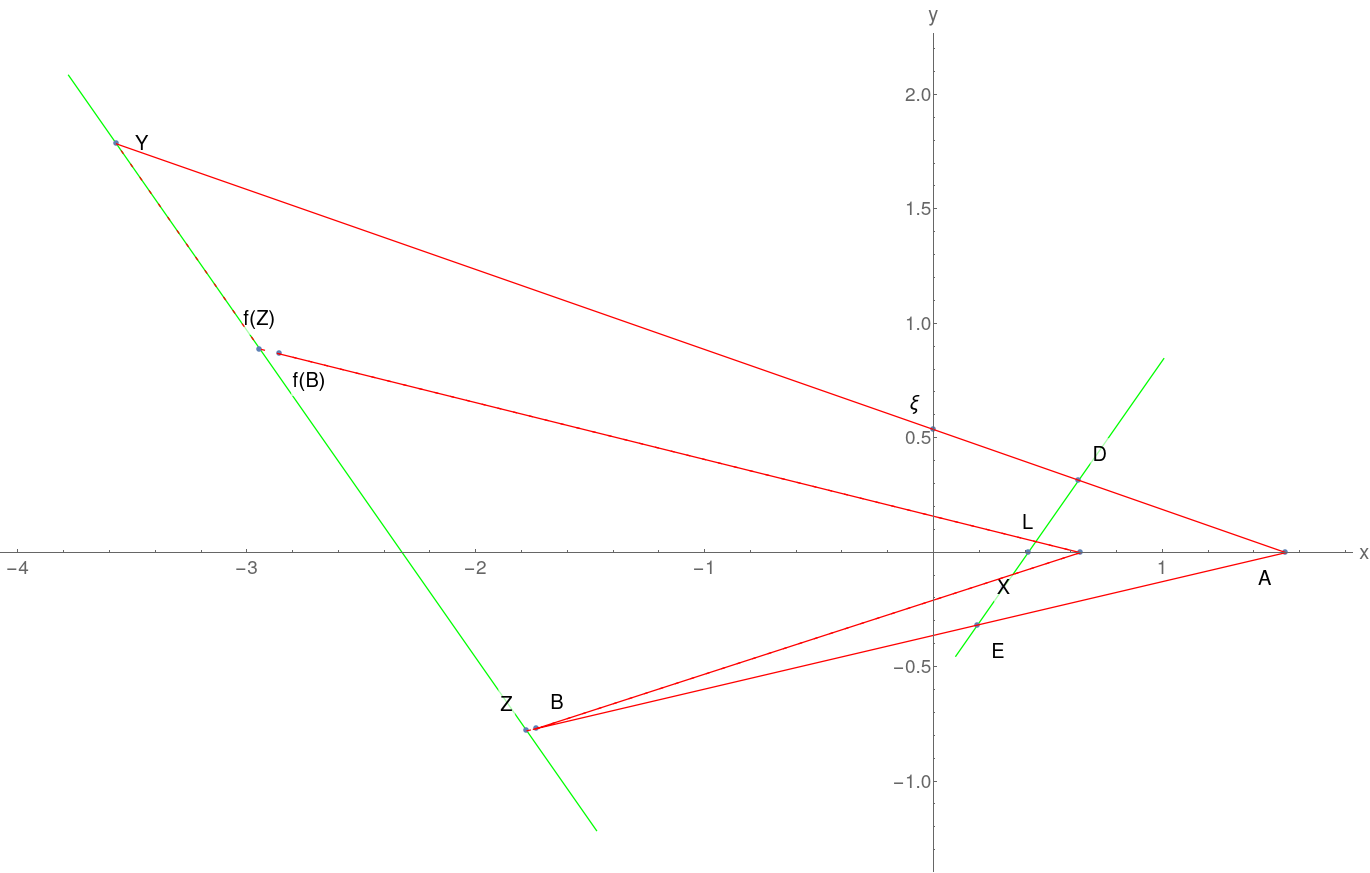}
	\label{fig:trap-reg}
\end{figure}

Let us define $L=\left(X_{x}(1+\alpha),0\right)$ to be the point of intersection of the line containing the local stable manifold of $X$ with the $x$-axis. Let us first compute $L_{x}-A_{x}$. After some transformations, we arrive at\[ L_{x}-A_{x}=\frac{a \left(1-\beta\right)}{(a+b-1) \left(\beta+1\right)}, \]which is negative if we assume \ref{c-1}. Hence, $ L_{x}<A_{x}$. Now, if $X$ is positioned below the segment $\seg{AB}$, then $\seg{AB}$ and $\seg{XL}$ intersect at the point $\tilde E$ above $X$ on $\ustlocmani{X}$. Note that $f(\tilde E)$ lies below $X$ on $\ustlocmani{X}$, which implies that $f(\seg{AM})$ lies below $\seg{AB}$. Consequently, $f(M)_{x}>A_{x}$, which contradicts that $y\mapsto f(0,y)_{x}$ is increasing. We have proved that $X$ lies above $\seg{AB}$. 

Moreover, under condition \ref{c-1} $B$ lies in the lower half-plane and consequently so does $M$. Suppose that $f(M)_{x}>L_{x}$. Then there exists the point $E$ of intersection of $\stlocmani{X}$ with $\seg{A B}$, as it is the common point of $\seg{f^{-1}(L) L}$, which connects the $x$-axis with the $y$-axis, and $\seg{A B}$. We also infer that $D=f^{-1}(E)$ is the intersection of $\stmani{X}$ and $\seg{A B}$. Moreover, $X\in\seg{ED}$, since the sign of the eigenvalue of $X$, corresponding to $\stmani{X}$ is negative, hence $E$ and $D$ must lie on the opposite sides of $X$. We can therefore define the triangle $H_{0}=\Delta ADE$. As $f(M)_{x}>L_{x}$, we have $f(M)\in H_{0}$. It also shows that $D_{x}>0$ and $E_{x}>0$. Indeed, if $X\in\seg{ED}$, then $E_{x}<0$ is only possible if $M_{y}<f^{-1}(L)_{y}$, but then we also have $f(M)_{x}<L_{x}$. Inequality $D_{x}>0$ is a consequence of the fact, that the stable eigenvector of $X$ has both coordinates either positive or negative. 
\begin{rem}\label{substitution}
	Let $a,b$ satisfy \ref{c-1}. We are going to use the change of variables $b=b_{a}(r)=(r^2-2ra)/4$ for $0\leq r\leq a$. For every $a>0$, it defines a decreasing bijection $$[0,a]\ni r\mapsto b_{a}(r)\in[-a^2/4,0]$$ with the inverse $b_{a}^{-1}(b)=a-\sqrt{a^2+4 b}$. Condition \ref{c-1} is then equivalent to 
	\begin{equation}\label{c1-varchange}
	 0< r<2~\text{ and }~ \frac{r+2}{2}<a\leq 2. \tag{C1$_{a,r}$}
	\end{equation}
\end{rem}

Now we find conditions under which $f(M)_{x}>L_{x}$ holds. By substituting $b=(r^2-2ra)/4$, for $(a,r)$ satisfying \eqref{c1-varchange}, we get
\[ f(M)_{x}=1+\dfrac{bA_{x}^{2}}{(a+1)A_{x}-1}=1-\dfrac{(2a-r)r}{(2a+r)(2-r)} \]and
\[ L_{x}=X_{x}\left(1+\alpha\right)= \dfrac{2}{2+2a-r}.\]Then we have
\[ f(M)_{x}-L_{x}=\frac{4 a (1-r) (2 a-r)}{(2-r) (2 a-r+2) (2 a+r)}. \]

It is positive if we additionally assume $0< r <1$. Hence, reversing variable change, inequality $f(M)_{x}>L_{x}$ holds for 
\begin{equation*}
1<a\leq 2\text{ and }\max\{1-a,(1-2a)/4\}< b< 0,
\end{equation*}which is condition \ref{c2}.
We can summarize 
\begin{lem}\label{f(M)in H0}
	If condition \ref{c2} is satisfied the triangle $H_{0}$ exists, is positioned in the right half plane and contains $f(M)$ in the interior and $X$ in the boundary.
\end{lem}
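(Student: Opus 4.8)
The statement is a summary of the configuration built up in the paragraphs above, so the plan is to package those observations and isolate the one substantive point: that the governing inequality $f(M)_x>L_x$ holds exactly on the region \ref{c2}. First I would record the logical skeleton. The existence of $H_0=\Delta ADE$ is equivalent to $f(M)_x>L_x$: when this holds, the segment $\seg{f^{-1}(L)L}$ carrying the local stable manifold $\stlocmani{X}$ (which runs from the $y$-axis to the $x$-axis) must cross the segment $\seg{AB}$ (which descends from $A$ on the positive $x$-axis through $M$ on the $y$-axis into the lower half-plane), and this crossing is $E$, with $D=f^{-1}(E)$ the second intersection of $\stmani{X}$ with $\seg{AB}$. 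For the boundary claim I would invoke the sign of the stable eigenvalue $-\alpha<0$: since $f$ acts on the stable segment near $X$ by a negative scalar, the points $E$ and $D=f^{-1}(E)$ lie on opposite sides of $X$, so $X\in\seg{ED}$, and $\seg{ED}$ is a side of $H_0$; hence $X\in\fr H_0$.

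Next I would check that $H_0$ sits in the right half-plane and that $f(M)$ is interior. The three vertices have positive abscissa: $A_x>0$ by construction, $E_x>0$ (else $M_y<f^{-1}(L)_y$, which would force $f(M)_x<L_x$, against the hypothesis), and $D_x>0$ (the stable eigenvector $(-\alpha,b)$ has coordinates of a single sign); so the whole triangle lies in $\{x>0\}$. For interiority of $f(M)$ I would use that $f(M)=(f(M)_x,0)$ lies on the $x$-axis, and a short computation (using $\alpha=r/2$, $A_x=2/(2-r)$, $L_x=2/(2+2a-r)$ in the variables of Remark \ref{substitution}) gives $L_x<f(M)_x<A_x$. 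Thus on the $x$-axis $f(M)$ sits strictly between $L$ and $A$; since $\fr H_0$ meets the $x$-axis only at the vertex $A$ and at the single point $L$ on the side $\seg{ED}$ (the stable line being positively directed, hence non-horizontal), the strict inequalities place $f(M)$ off $\fr H_0$, i.e. $f(M)\in\inter H_0$.

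Finally, the crux is to identify where $f(M)_x>L_x$ holds. Here I would pass to the substitution $b=b_a(r)=(r^2-2ra)/4$ of Remark \ref{substitution}, under which \ref{c-1} becomes \eqref{c1-varchange}, and compute the difference in closed form:
\[
f(M)_x-L_x=\frac{4 a (1-r) (2 a-r)}{(2-r) (2 a-r+2) (2 a+r)}.
\]
Under \eqref{c1-varchange} the denominator and the factor $2a-r$ are positive (indeed $a>r$ there), so the sign is governed solely by $1-r$; hence $f(M)_x>L_x$ is equivalent to $r<1$. As $b_a$ is decreasing, $r<1$ corresponds to $b>b_a(1)=(1-2a)/4$. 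The ambient positions of $A,B,M,L$ already require \ref{c-1}, in particular $b>1-a$; intersecting the two lower bounds yields $b>\max\{1-a,(1-2a)/4\}$, which is precisely \ref{c2}. The only real obstacle is computational: deriving the closed forms for $f(M)_x$ and $L_x$ and factoring their difference, so that the sign test under \eqref{c1-varchange} reduces to inspecting $1-r$; once the difference is in the displayed factored form the equivalence with \ref{c2} is immediate.
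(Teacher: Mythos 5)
Your proposal is correct and follows essentially the same route as the paper: the paper's own proof is exactly the preceding construction of $E=\stlocmani{X}\cap\seg{AB}$, $D=f^{-1}(E)$ and $H_{0}=\Delta ADE$ (with $X\in\seg{ED}$ from the negative stable eigenvalue and $E_{x},D_{x}>0$ by the same contrapositive and eigenvector-sign arguments), combined with the identical substitution $b=(r^{2}-2ra)/4$ and the identical factored expression for $f(M)_{x}-L_{x}$, whose sign reduces to that of $1-r$, i.e.\ to condition \ref{c2}. The only differences are cosmetic: you justify $f(M)\in\inter H_{0}$ more carefully (via $L_{x}<f(M)_{x}<A_{x}$ and the fact that $\fr H_{0}$ meets the $x$-axis only at $A$ and $L$) where the paper merely asserts it, and you repeat the paper's own loose side remark that $D$ lies on $\seg{AB}$, which is not needed for the statement.
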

We will need the following
\begin{lem}\label{BliesaboveWsY}
	If \ref{c3} is satisfied, then $B$ lies above the line of the stable manifold of $Y$ in the left half plane.
\end{lem}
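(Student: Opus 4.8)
The plan is to turn the statement into a single polynomial inequality by passing to the coordinates $(a,r)$ of Remark~\ref{substitution}. Under $b=b_a(r)=(r^2-2ra)/4$ one has $\alpha=r/2$ and $\beta=a-r/2$, so every point constructed so far becomes a rational function of $a$ and $r$. A direct computation gives $A_x=2/(2-r)$, and since $B=f(A)=(1-aA_x,\,bA_x)$ with $A_x>0$,
\[
B_x=\frac{2-r-2a}{2-r},\qquad B_y=\frac{r(r-2a)}{2(2-r)}.
\]
Because $1<a$ and $0<r<2$ we have $2-r-2a<0$, so $B_x<0$ and $B$ genuinely lies in the left half plane, which is where the claim is to be read.

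Next I would write down the line $\ell$ containing the local stable manifold of $Y$: it passes through $Y$ in the stable eigendirection $(\alpha,b)$, so its slope is $m=b/\alpha=(r-2a)/2$ and its intercept is $c=Y_y-mY_x=-(2a-r)/s$, where I abbreviate $s=2a-2-r$. Note that $s>0$ under \ref{c-1}, since the hypothesis $a+b>1$ reads $a+b-1=s(2-r)/4>0$. Saying that $B$ lies above $\ell$ means $Q:=B_y-(mB_x+c)>0$. Substituting the expressions above and factoring, everything collapses to
\[
Q=\frac{2a-r}{s\,(2-r)}\,\bigl(r^2-ar-2a^2+4a\bigr).
\]
The three factors $2a-r$, $s$ and $2-r$ in the prefactor are all positive in the region \ref{c-1}, so the sign of $Q$ coincides with the sign of $N:=r^2-ar-2a^2+4a$.

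It then remains to show that \ref{c3} forces $N>0$ and to match the threshold with $p_3(a)$. Reversing the substitution through $r=b_a^{-1}(b)=a-\sqrt{a^2+4b}$, the equation $N=0$ becomes $4b-a^2+4a=a\sqrt{a^2+4b}$; putting $w=\sqrt{a^2+4b}$ turns this into the quadratic $w^2-aw-2a^2+4a=0$, whose positive root $w=\tfrac12\bigl(a+\sqrt{9a^2-16a}\bigr)$ yields precisely $b=\tfrac18\bigl(3a^2-8a+\sqrt{9a^4-16a^3}\bigr)=p_3(a)$. Viewing $N$ as a quadratic in $r$ with roots $r_\pm=\tfrac12\bigl(a\pm\sqrt{9a^2-16a}\bigr)$, in the relevant range $0<r<1$ one checks that $N>0$ exactly for $r<r_-$, which under the decreasing change of variables $r\mapsto b_a(r)$ corresponds to $b>p_3(a)$. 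Thus \ref{c3}, which imposes $b>p_3(a)$, gives $N>0$, hence $Q>0$, which is the assertion.

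The computation itself is routine; the only delicate points are the sign bookkeeping — confirming $s>0$, $2a-r>0$ and $2-r>0$ throughout \ref{c-1}, so that the sign of $Q$ is the sign of $N$ — and selecting the correct branch of the square root (equivalently, the correct root $r_-$ of $N$) so that the locus $N=0$ is identified with $b=p_3(a)$ and not with a spurious second solution. This is exactly the place where one uses that we sit in the small-$r$ part of the region, i.e.\ the part where $b$ is close to $0$, singled out by \ref{c3}.
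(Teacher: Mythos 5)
Your proposal is correct and follows essentially the same route as the paper's own proof: the same change of variables $(a,b)\mapsto(a,r)$ from Remark~\ref{substitution}, the same reduction of the geometric claim to the sign of the quadratic $N=r^2-ar-2a^2+4a$ (the paper's condition $2a^2+a(r-4)-r^2<0$ is exactly $-N<0$, and your prefactor $\frac{2a-r}{s(2-r)}$ is $\beta$ times the paper's positive denominator factor), and the same identification of the locus $N=0$ with $b=p_3(a)$ via the correct branch of the square root. The only points to tighten are cosmetic: for $1<a<16/9$ the roots $r_\pm$ are not real and $p_3(a)$ is undefined (so \ref{c3} drops that bound), but there $N>0$ holds automatically since the discriminant $9a^2-16a$ is negative, which deserves one explicit sentence; and the word ``exactly'' in ``$N>0$ exactly for $r<r_-$'' fails in the sliver $16/9\le a<(3+\sqrt{17})/4$, where $r_+<1$, though this is harmless because your argument only uses the implication $r<r_-\Rightarrow N>0$, which is valid for an upward parabola.
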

\begin{proof}
	Assume \ref{c3} and note that it is stronger then \ref{c-1}, which by Remark \ref{substitution} is equivalent to \eqref{c1-varchange}. We compute conditions under which $B$ lies above the line of the stable manifold of $Y$, that is line given by \[ 
	\det\begin{pmatrix}
	x-Y_{x} & \alpha\\
	y-Y_{y} & b
	\end{pmatrix}=0,
	 \]or
	\begin{equation}\label{B1}
	\dfrac{1}{\beta}(y-Y_{y})+x-Y_{x}=0.
	\end{equation}
	Substituting coordinates of $B$ to the left hand side we get 
	
	\begin{equation}\label{subB}
	\frac{2 b-a \left(\beta-2\right)}{a+b-1}=0.
	\end{equation}
	 Since coefficient of $y$ in \eqref{B1} is positive, if \eqref{subB} is positive, $B$ lies above the line of the stable manifold of $Y$. Substituting  $b=(r^2-2ra)/4$ for $(a,r)$ satisfying \eqref{c1-varchange} and $r\in[0,a]$, we obtain 
	\begin{equation*}\label{k1}
	\frac{4 a^2+2 a (r-4)-2 r^2}{(r-2) (2 a-r-2)}=0.
	\end{equation*} Assuming \eqref{c1-varchange}, the left hand side is positive if $2 a^2+a (r-4)-r^2<0$. Hence it is satisfied on the set of $(a,r)$ fulfilling one of the following conditions
	\begin{enumerate}[label=(\roman*)]
		\item\label{r1} $1<a<\frac{16}{9}\text{ and  } 0<r<2 a-2$ 
		\item\label{r2} $\frac{16}{9}\leq a<2\text{ and  } \left(0<r<\frac{a}{2}-\frac{1}{2} \sqrt{9 a^2-16 a}\text{ or  } \frac{1}{2} \sqrt{9 a^2-16 a}+\frac{a}{2}<r<2 a-2\right).$ 
	\end{enumerate}If we reverse the variable change via $r\mapsto a-\sqrt{a^2+4b}$ we obtain
	\begin{enumerate}[label=(\arabic*)]
		\item $ 1<a<\frac{16}{9}\text{ and  } 1-a<b<0$,
		\item $ \frac{16}{9}\leq a<2$ and $ \frac{1}{8} \left(3 a^2-8 a\right)+\frac{1}{8} \sqrt{9 a^4-16 a^3}<b<0$ or \\$ 1-a<b<\frac{1}{8} \left(3 a^2-8 a\right)-\frac{1}{8} \sqrt{9 a^4-16 a^3}).$
	\end{enumerate} which is weaker then condition \ref{c3}.
\end{proof}

We assume \ref{c3}. Let $ Z $ be the point of intersection of the stable manifold of $Y$ with the line containing $ \seg{AB} $. Denote by $G$ the closed triangle $\Delta YAZ$. The following lemma was proved in a more general setting, for border collision normal form, in \cite{simpson:devaney-chaos}.
\begin{lem}\label{DeltaYAZisINVARIANT}
	Assuming \ref{c3}, the triangle $G$ is forward-invariant under $f$.
\end{lem}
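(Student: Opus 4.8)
The plan is to exploit that $f$ is affine on each of the closed half-planes $\{x\le 0\}$ and $\{x\ge 0\}$, so that invariance of the convex set $G$ reduces to tracking finitely many points. The $y$-axis splits $G$ into the two convex polygons $G_{-}=G\cap\{x\le 0\}$ and $G_{+}=G\cap\{x\ge 0\}$, and the restriction of $f$ to either piece is affine, hence carries that piece onto the convex hull of the images of its vertices. Since $G$ itself is convex, it therefore suffices to check that the $f$-image of every vertex of $G_{-}$ and of $G_{+}$ lies in $G$; this gives $f(G_{\pm})\subset G$ at once, and so $f(G)\subset G$.

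First I would pin down the combinatorial picture. The vertices $Y$ and $Z$ lie in $\{x\le 0\}$ while $A$ lies in $\{x\ge 0\}$, so the $y$-axis meets $\partial G$ in exactly two points: the point $\xi$ where it crosses the unstable edge $\seg{YA}$, and the point $M$ where it crosses the edge $\seg{AZ}$ carried by the line $\ell_{AB}$ through $A$ and $B$. Consequently $G_{+}$ is the triangle $\Delta A\xi M$ and $G_{-}$ is the quadrilateral with vertices $Y,\xi,M,Z$, so the points whose images I must locate are precisely $Y$, $A$, $Z$, $\xi$ and $M$.

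The images are then identified one at a time. We have $f(Y)=Y$ since $Y$ is fixed, while $f(\xi)=A$ and $f(A)=B$ by construction; membership $A\in G$ is immediate, and $B\in G$ follows from Lemma \ref{BliesaboveWsY}, since $B$ lies on $\ell_{AB}$ strictly above the line of $\stmani{Y}$ and hence on the subsegment of $\seg{AZ}$ between $A$ and the crossing point $Z=\ell_{AB}\cap\stmani{Y}$. For $f(Z)$, I would use that $Z$ lies on the stable manifold of $Y$ inside $\{x\le 0\}$, where $f$ coincides with the affine map whose linear part is $Df$ at $Y$; since the stable eigenvalue satisfies $0<\alpha<1$, the point $f(Z)=Y+\alpha(Z-Y)$ lies on the edge $\seg{YZ}\subset G$. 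Finally $M=(0,M_{y})$ yields $f(M)=(1+M_{y},0)$, a point of the $x$-axis, and since $G$ meets the $x$-axis in the chord running from $A$ to the intercept of the stable edge $\seg{YZ}$, the membership $f(M)\in G$ amounts to locating $f(M)_{x}$ inside this interval; this is exactly the kind of parameter estimate already carried out for $f(M)_{x}>L_{x}$ under \ref{c2}, and is compatible with Lemma \ref{f(M)in H0}.

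I expect the main obstacle to be the verification that $f(M)\in G$: unlike the other vertices, $M$ is not carried by an invariant manifold, so its image must be pinned down by the direct parameter-dependent computation, and it is precisely here that the lower bounds on $b$ in \ref{c3}—via the change of variables $b=(r^{2}-2ra)/4$ together with Lemmas \ref{BliesaboveWsY} and \ref{f(M)in H0}—come into play. A secondary but genuine point to nail down is the combinatorial claim underlying the reduction, namely that $Z$ lies in $\{x\le 0\}$ and that $B$ falls between $A$ and $Z$ along $\ell_{AB}$, on which both the vertex splitting and the contraction argument for $f(Z)$ depend.
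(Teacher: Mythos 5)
Your proof is correct, and it rests on the same two pillars as the paper's proof---Lemma \ref{BliesaboveWsY} and the location of $f(M)$ secured by Lemma \ref{f(M)in H0}---but it is organized differently, and the difference is worth recording. The paper does not split $G$ along the $y$-axis into $G_{\pm}$; it follows the boundary arcs instead: $f(\seg{YZ})\subset\seg{YZ}$ because the eigenvalues at $Y$ are positive (your computation $f(Z)=Y+\alpha(Z-Y)$), the memberships $B\in\seg{AZ}$ and $f(M)\in G$ are inherited from the two preceding lemmas, and the one point the paper analyzes afresh is $f(B)$: it lies in the upper half-plane (since $B$ is in the third quadrant), it lies above the line of $\stmani{Y}$ (linearity of $f$ on $\{x\le 0\}$, positivity of the expanding eigenvalue, Lemma \ref{BliesaboveWsY}), and it lies on the correct side of $\seg{YA}$ because $\seg{f(B)f(M)}$ is an arc of $\ustmani{Y}$ and unstable manifolds have no self-intersections. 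Your vertex reduction makes that entire last step superfluous: since $B\in\seg{MZ}\subset\{x\le 0\}$ and $f$ is affine there, $f(B)$ lies on $\seg{f(M)f(Z)}$ and is in $G$ for free once $f(M),f(Z)\in G$ are known; so your argument is purely convex-geometric and avoids the paper's only soft topological ingredient. The price is that $f(M)\in G$ becomes an explicit obligation, which you flag as the main obstacle; in fact no new estimate is needed, because the paper already supplies both bounds: $f(M)_{x}>L_{x}>0$ under \ref{c2} (equivalently, $f(M)\in\inter H_{0}$ by Lemma \ref{f(M)in H0}, and $H_{0}\subset G$ since its vertices lie on $\fr G$), and $f(M)_{x}<A_{x}$ from the monotonicity of $y\mapsto f(0,y)_{x}$ noted in Section \ref{sec:properties-of-lozi-family}; together these place $f(M)$ on the chord $G\cap\{y=0\}$, whose left endpoint has negative $x$-coordinate. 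Your two flagged combinatorial points also hold: under \ref{c3}, Lemma \ref{BliesaboveWsY} puts $B$ strictly above the line of $\stmani{Y}$, and since that line has negative slope while the line through $A$ and $B$ has positive slope, the crossing $Z$ lies to the left of and below $B$, hence in $\{x\le 0\}$, with $B$ strictly between $M$ and $Z$ on $\seg{AZ}$.
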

\begin{proof}
	Note that $\seg{YZ}\subset\stmani{Y}$ and the eigenvalues at $Y$ are positive, hence $f(Z)\in G$. Under the condition \ref{c-1} $B$ lies in the third quadrant, so it must be mapped by $f$ to the upper half-plane. Moreover, as $f$ is linear in the left half plane and expanding eigenvalue at $Y$ is positive, by Lemma \ref{BliesaboveWsY} $f(B)$ must be located above $\seg{YZ}$. Lastly, $\seg{f(B)f(M)}$ cannot cross $\seg{YA}$, as unstable manifolds have no self intersections. Consequently, $f(B)$ is contained in $G$.
\end{proof}

\section{Hyperbolicity}\label{sec:hyperbolicity}
Following Misiurewicz, we will prove that Lozi map is in a certain way hyperbolic; i. e. at almost all points there exists a hyperbolic splitting. To cope with the case of negative $b$ we will follow closely Misiurewicz's proof of the following theorem. Put $c=(a-\sqrt{a^2-4b})/2.$ Let us denote by $T_{z}$ the tangent space at $z\in\plane$.
\begin{thm}[see {\cite[Thm. 3.]{strange-attractor-mis}}]Let $0<b<1$, $a>0$ and $a-b>1$. Then
	\begin{enumerate}
		\item If $z\in \real^2$ is such that $f$ is differentiable at all points $f^n(z)$, $n=0,1,2,...,$ then there exists a one-dimensional subspace $E^{s}_{z}\subset T_{z}$ such that
		\begin{equation}\label{c^n}
		\norma{Df^n(v)}\leq c^n \norma{v}\text{ for every }v\in E_{z}^{s}.
		\end{equation}Moreover, $Df_{z}(E^{s}_{z})=E^{s}_{f(z)}.$
		\item If $z\in \real^2$ is such that $f^{-1}$ is differentiable at all points $f^{-n}(z)$, $n=0,1,2,...,$ then there exists a one-dimensional subspace $E^{u}_{z}\subset T_{z}$ such that
		\begin{equation}\label{b/c^n}
		\norma{Df^n(v)}\geq \Big(\dfrac{b}{c}\Big)^n \norma{v}\text{ for every }v\in Df^{-n}(E_{z}^{u}).
		\end{equation}Moreover, $Df^{-1}_{z}(E^{u}_{z})=E^{u}_{f^{-1}(z)}.$
		\item if $z\in \real^2$ is such that $f$ is differentiable at all points $f^{n}(z)$, $n=0,\pm 1,\pm 2,...,$then $T_{z}=E^{s}_{z}\oplus E^{u}_{z}$. 
	\end{enumerate}
\end{thm}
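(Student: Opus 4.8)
The plan is to produce two invariant cone fields for the two constant matrices that $Df$ can assume and to read off all three assertions from sharp contraction/expansion estimates. First I would record that wherever $f$ is differentiable $Df$ equals one of $A_{+}=\begin{pmatrix}-a&1\\b&0\end{pmatrix}$ or $A_{-}=\begin{pmatrix}a&1\\b&0\end{pmatrix}$, both with $\det=-b$, and that $c$ together with $c':=b/c=(a+\sqrt{a^2-4b})/2$ are exactly the roots of $t^2-at+b$, so $c+c'=a$ and $cc'=b$. The hypotheses are precisely what make this work: $a>1+b$ gives $a^2-4b>(1-b)^2\ge 0$ (real, distinct roots), and $0<b<1$ together with $a-b>1$ force $0<c<1<c'$. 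Writing a direction by its slope $m$, the forward action of $A_{s}$ sends $(1,m)$ to $(m-as,b)$, i.e. $m\mapsto b/(m-as)$. I would then check that the shallow cone $C^{u}=\{\,|m|\le c\,\}$ is forward invariant, $A_{s}(C^{u})\subset C^{u}$ for both signs $s$, which reduces to $c^{2}-ac+b\le 0$ and holds because $c$ is a root; and, crucially, that the expansion estimate is sharp, $\norma{A_{s}v}\ge c'\norma{v}=(b/c)\norma{v}$ for all $v\in C^{u}$, with equality on $\partial C^{u}$. The tight constant is exactly the identity $cc'=b$, which forces $(m-as)^{2}+b^{2}-(c')^{2}(1+m^{2})$ to be nonnegative on $C^u$ and to vanish on its boundary.

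Given this, I would construct $E^{u}_{z}$ as the nested intersection $\bigcap_{n\ge 0}Df^{n}_{f^{-n}(z)}\bigl(C^{u}_{f^{-n}(z)}\bigr)$; this uses only differentiability of $f^{-1}$ along the backward orbit $f^{-n}(z)$, matching the hypothesis of (2), and strict cone contraction makes the intersection one–dimensional. Telescoping the per-step bound gives $\norma{Df^{n}v}\ge (b/c)^{n}\norma{v}$, while $Df_{z}(E^{u}_{z})=E^{u}_{f(z)}$ is immediate from forward invariance. For the stable statement (1) I would avoid redoing the analysis by observing that $f^{-1}(x,y)=(y/b,\,x-1+(a/b)|y|)$ is conjugate, through the orthogonal change of coordinates $\Phi(x,y)=(-y,-x)$ (hence norm–preserving), to the Lozi map with parameters $(\hat a,\hat b)=(a/b,1/b)$. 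Its associated roots are $1/c'$ and $1/c$, and its analogue of $a-b>1$, namely $\hat a-\hat b=(a-1)/b>1$, holds precisely because $a-b>1$. Thus the unstable machinery above applies to this map and yields a one–dimensional $E^{s}_{z}$ (the $f^{-1}$–unstable line, lying in the steep cone $\{|m|\ge c'\}$) on which $f^{-1}$ expands by $\ge (1/c)^{n}$; substituting $w=Df^{n}v$ converts this into $\norma{Df^{n}v}\le c^{n}\norma{v}$ for $v\in E^{s}_{z}$, and the construction uses exactly differentiability of $f$ along the forward orbit, as (1) requires.

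For (3), when $f$ is differentiable along the whole bi-infinite orbit both subspaces exist, and since $E^{u}_{z}\subset\{|m|\le c\}$ while $E^{s}_{z}\subset\{|m|\ge c'\}$ with $c<1<c'$, the two lines are transverse and $T_{z}=E^{s}_{z}\oplus E^{u}_{z}$. I expect the main obstacle to be pinning down the \emph{exact} rates $c^{n}$ and $(b/c)^{n}$ rather than merely some exponential rates: a naive single-step Euclidean estimate loses a factor arising from the slope dependence $\norma{(1,m)}=\sqrt{1+m^{2}}$, and it is only the algebraic coincidences $cc'=b$ and $c+c'=a$ that make the cone estimate tight on the nose. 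The secondary delicate point is verifying the cone conditions simultaneously for $A_{+}$ and $A_{-}$ (equivalently for both signs in $Df^{-1}$), since it is the sign–switching along the orbit, not any single eigendirection, that dictates the uniform constants $c$ and $b/c$.
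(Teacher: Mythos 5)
Your core strategy --- slope cones determined by the roots $c$ and $c'=b/c$ of $t^{2}-at+b$, nested intersections of cone images to define the invariant lines, and transversality of the two cones for the splitting --- is exactly the approach of the paper (Lemma \ref{inv-cones} and Theorem \ref{hyp}, adapting Misiurewicz), and your invariance and sharp-expansion computations are correct (the paper obtains the same sharp constant more simply, by componentwise estimates on $\bar{t}$ and $\bar{r}$). But there is a genuine gap at the decisive step: the sentence ``strict cone contraction makes the intersection one--dimensional.'' The cone field here is \emph{not} strictly invariant, as your own sharpness computation already shows (equality on part of $\partial C^{u}$). Concretely, under $A_{+}$ the boundary slope $c$ maps to the boundary slope $-c$, and under $A_{-}$ the slope $-c$ maps back to $c$; indeed $(1,c)$ is an eigendirection of $A_{-}A_{+}$ with eigenvalue $-(b/c)^{2}$. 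So along a backward orbit alternating between the half-planes (such orbits exist, e.g.\ the period-two orbit), every finite intersection $\bigcap_{k\le n}Df^{k}\bigl(C^{u}_{f^{-k}(z)}\bigr)$ still contains a boundary ray of $C^{u}_{z}$, and ``strictness'' cannot force the limit to be a line. The conclusion is true, but it needs an argument. Two standard fixes: (i) the paper's route --- show $E^{u}_{z}\subset J^{u}_{z}:=\{v:\norma{Df^{-n}v}\to 0\}$ (immediate from your expansion estimate), note that $J^{u}_{z}$ is a linear subspace which misses the stable cone (vectors there expand under $Df^{-n}$), hence $\dim J^{u}_{z}\le 1$, while $E^{u}_{z}$ contains a line by compactness of nested closed cones; or (ii) a projective contraction estimate --- the derivative of the induced map on directions is bounded on $C^{u}$ by $|\det A_{\pm}|\cdot(c/b)^{2}=c/c'<1$, so the projectivized images shrink geometrically even though the cones share boundary rays. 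Without one of these, the existence of the lines $E^{u}_{z}$ (and hence $E^{s}_{z}$) is unsupported.

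A secondary issue concerns your conjugation trick for part (1), which is otherwise an attractive alternative to the paper's symmetric treatment of both cones: the conjugated map has parameters $(\hat a,\hat b)=(a/b,1/b)$ with $\hat b=1/b>1$, so it does \emph{not} satisfy the standing hypothesis $0<b<1$, and the theorem cannot be invoked for it as a black box. What rescues the reduction is that the machinery only needs $0<\hat c<1<\hat c'$ (plus $\hat a^{2}-4\hat b>0$ and $\hat b>0$), and these follow from your observation that the roots are $1/c'$ and $1/c$; but this must be said explicitly, since the implication ``$0<b<1$ and $a-b>1$ imply $0<c<1<c'$'' that you proved for the original parameters is not available for $(\hat a,\hat b)$. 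Also, a small imprecision: the quadratic $(m-as)^{2}+b^{2}-(c')^{2}(1+m^{2})$ vanishes only on one of the two boundary rays of $C^{u}$ for each sign $s$ (it equals $4ac>0$ on the other); the inequality on all of $C^{u}$ follows since this quadratic in $m$ is concave and nonnegative at both endpoints $m=\pm c$.
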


Let us now consider the case of $b<0$. Note that, as Misiurewicz mentions in his work, constants $c=(a-\sqrt{a^2-4b})/2$ and $b/c=(a+\sqrt{a^2-4b})/2$ are eigenvalues of a two-periodic point. On the other hand, the eigenvalues of one of the fixed points, that is $\alpha=\alpha_{b}=(a-\sqrt{a^2+4b})/2$ and $\beta=\beta_{b}=(a+\sqrt{a^2+4b})/2$ seem to work as well. It does not matter which point we take, if we additionally take the absolute value of eigenvalues. As it turns out, $\alpha_{-b}=c$ and $\beta_{-b}=b/c$ for $b>0$. Hence, given our negative $b$, we can replace in the reasoning of Misiurewicz's proof $b$ by $-b$. For completeness we enclose the proof below.

First, we need to find invariant families of stable and unstable cones. The proof of their existence can be also found in \cite{rychlik:invariant-measures}, although the author used different methods. We present a proof based on \cite{Lozi-likemaps}. For $z=(x,y)\in\plane$, let us define
\[
C^{s}_{z}:=\left\{\binom{t}{r}\in T_{z}~:~\modul{t}\leq \dfrac{1}{\beta} |r|\right\},
\] and
\[
C^{u}_{z}:=\left\{\binom{t}{r}\in T_{z}~:~|r|\leq \alpha |t|\right\}.
\]As the following holds, $C^{s}_{z}$ and $C^{u}_{z}$ will be called respectively stable and unstable cones at $z$. 
\begin{lem}\label{inv-cones}
	Assume \ref{c-1}. Let $z=(x,y)\in\plane$ be such that $x\neq 0$, then $Df^{-1}_{f(z)}$ ($Df_{z}$)  maps the stable cone $ C^{s}_{f(z)} $ (unstable cone $C^{u}_{z}$) into $ C^{s}_{z} $ ($ C^{u}_{f(z)} $) and expands all its vectors by a factor of at least $1/\alpha$ ($\beta$).
\end{lem}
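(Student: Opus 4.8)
The plan is to verify the cone conditions by explicit linear algebra, exploiting that $f$ is piecewise linear so that away from the $y$-axis (i.e.\ $x\neq 0$) the derivative $Df_{z}$ is one of exactly two constant matrices. First I would record these matrices: for $x>0$ we have $Df_{z}=\begin{pmatrix} -a & 1\\ b & 0\end{pmatrix}$ and for $x<0$ we have $Df_{z}=\begin{pmatrix} a & 1\\ b & 0\end{pmatrix}$, with inverse of the general form $\begin{pmatrix} 0 & 1/b\\ 1 & \pm a/b\end{pmatrix}$. Since both cones are defined by symmetric conditions $|t|\le \tfrac{1}{\beta}|r|$ and $|r|\le \alpha|t|$ that are invariant under negation of the vector, the sign of $x$ (which only flips $a\mapsto -a$ in the first row) will not affect the containment estimates once absolute values are taken. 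This is the structural reason the lemma holds uniformly for all $z$ with $x\neq 0$, and I would state it early to reduce the work to a single matrix computation.

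Next I would treat the unstable cone directly. Take $\binom{t}{r}\in C^{u}_{z}$, so $|r|\le \alpha|t|$, and compute its image $Df_{z}\binom{t}{r}=\binom{\pm a t + r}{b t}$. I must show the image lies in $C^{u}_{f(z)}$, i.e.\ that the new second coordinate is bounded by $\alpha$ times the absolute value of the new first coordinate: $|bt|\le \alpha\,|{\pm a t + r}|$. Using $|r|\le\alpha|t|$ I would bound $|{\pm at+r}|\ge |a||t|-|r|\ge (a-\alpha)|t|$, so it suffices to check $|b|\le \alpha(a-\alpha)$. Recalling $\alpha=(a-\sqrt{a^2+4b})/2$ and $\beta=(a+\sqrt{a^2+4b})/2$, one has $\alpha\beta=-b$ and $\alpha+\beta=a$, whence $\alpha(a-\alpha)=\alpha\beta=-b=|b|$ (note $b<0$). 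Thus the inequality holds with equality on the boundary direction, giving invariance. For the expansion factor I would show the first coordinate grows: $|{\pm at+r}|\ge (a-\alpha)|t|=\beta|t|\ge \beta\cdot(\text{suitable norm of }\binom{t}{r})$, extracting the claimed factor $\beta$ after comparing with the norm of the original vector, which is controlled by $|t|$ on the unstable cone.

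For the stable cone I would argue dually using $Df^{-1}_{f(z)}$. Writing $Df^{-1}=\begin{pmatrix} 0 & 1/b\\ 1 & \mp a/b\end{pmatrix}$ and taking $\binom{t}{r}\in C^{s}_{f(z)}$ with $|t|\le \tfrac{1}{\beta}|r|$, its preimage is $\binom{r/b}{\,t\mp ar/b\,}$, and I would verify $|r/b|\le \tfrac{1}{\beta}|t\mp ar/b|$, i.e.\ that the result stays in $C^{s}_{z}$, together with the expansion factor $1/\alpha$. The same relations $\alpha\beta=-b$, $\alpha+\beta=a$ reduce every estimate to an algebraic identity, and condition \ref{c-1} guarantees $0<\alpha<1<\beta$ so that the factors $\beta$ and $1/\alpha$ genuinely exceed $1$. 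The main obstacle, such as it is, is bookkeeping rather than conceptual: I must fix a single vector norm, be careful that the triangle-inequality bounds $|{\pm at+r}|\ge a|t|-|r|$ are saturated precisely along the eigendirections so the claimed factors are sharp, and confirm that the cone definitions are arranged so the preimage/image estimates both come out as the advertised $1/\alpha$ and $\beta$. Once the identities $\alpha\beta=-b$ and $\alpha(a-\alpha)=\alpha\beta$ are in hand, the proof is a short uniform computation valid for either sign of $x$.
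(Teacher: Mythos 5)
Your proposal is correct and follows essentially the same route as the paper: both reduce the lemma to a direct computation with the two constant matrices $Df_{z}=\begin{pmatrix}-\sigma a & 1\\ b & 0\end{pmatrix}$, the relations $\alpha+\beta=a$, $\alpha\beta=-b=|b|$, and triangle-inequality bounds that are saturated along the eigendirections. The only cosmetic difference is that the paper normalizes one coordinate and proves the expansion component-wise (hence in any monotone norm), whereas you bound the dominant coordinate and implicitly work in the sup norm; the missing second-coordinate bound $|bt|=\alpha\beta|t|\ge\beta|r|$ on the unstable cone is immediate from $\alpha\beta=|b|$.
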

\begin{proof}
	Note that if $x\neq 0$, then $f(z)_{y}\neq 0$ and $Df^{-1}_{f(z)}$ exists. Let $z=(x,y)\in \plane$ be such that $x\neq 0$. We can compute
	\[ 
	Df_{z}=\begin{pmatrix}
	-\sigma a & 1\\
	b & 0
	\end{pmatrix},
	\] where $\sigma=\sgn x$ is the sign of $x$. Let
	\[ 
	Df_{z}
	\binom{t}{r}=\binom{\bar{t}}{\bar{r}}\in T_{f(z)},
	\]for $(t,r)\in T_{z}$. Then \[ 
	Df^{-1}_{f(z)}
	\binom{\bar{t}}{\bar{r}}=\binom{t}{r}.
	\]
	It is enough to show that the following holds
	\begin{enumerate}[label=(\arabic*)]
		\item\label{app2} if $|\bar{t}|\leq \dfrac{1}{\beta}|\bar{r}|$, then $|t|\leq \dfrac{1}{\beta}|r|$, $|\bar{t}|\leq \alpha|t|$ and $|\bar{r}|\leq \alpha|r|$,
		\item\label{app3} if $|r|\leq \alpha|t|$, then $|\bar{r}|\leq \alpha|\bar{t}|$, $|\bar{r}|\geq \beta|r|$ and $|\bar{t}|\geq \beta|t|,$
	\end{enumerate}
	Both $\alpha$ and $\beta=|b|/\alpha$ are roots of $\lambda^2-a\lambda+\modul b=0$, so $\alpha+\beta=a$. We may assume that $\bar{r}=1$ and $|\bar{t}|\leq \alpha/|b|$. Then $t=1/b$ and $r=\bar{t}-\sigma a/b$, and we have to prove that 
	\begin{equation}\label{app1}
	1\leq \alpha \Big| \bar{t}-\dfrac{\sigma a}{b}\Big|\text{, }\dfrac{\alpha}{|b|}\leq \alpha\cdot \dfrac{1}{|b|}\text{ and }1\leq \alpha \Big|\bar{t}-\dfrac{\sigma a}{b}\Big|
	\end{equation}
	The second inequality holds and the first and third are identical. We have 
	\[ \Big| \bar{t}-\dfrac{\sigma a}{b}\Big|\geq\dfrac{a}{|b|}-|\bar{t}|\geq \dfrac{a}{|b|}-\dfrac{\alpha}{|b|}=\dfrac{1}{\alpha},
	\]so \eqref{app1} holds.
	In a similar way we prove \ref{app3}, we may assume that $t=1$ and $|r|\leq \alpha$. Then $\bar{t}=\sigma a+r$ and $\bar{r}=b$. It immediately follows that $\beta|r|\leq \beta\alpha=|b|=|\bar{r}|$. It remains to prove second and fourth inequality in \ref{app3}. That is
	\begin{equation*}
	|b|\leq \alpha |\sigma a+r|\text{ and }|\sigma a+r|\geq \dfrac{|b|}{\alpha}.
	\end{equation*} As in the case \ref{app2}, they are equivalent. We have
	\[ |\sigma a+r|\geq a-\alpha=\beta=\dfrac{|b|}{\alpha},
	\]which concludes our proof.
\end{proof}
\begin{thm}\label{hyp}Let us assume \ref{c-1}. Then
	\begin{enumerate}[label=(\roman*)]
		\item\label{hyp1} If $q\in \real^2$ is such that $f$ is differentiable at all points $f^n(q)$, $n=0,1,2,...,$ then there exists a one-dimensional subspace $E^{s}_{q}\subset T_{q}$ such that
		\begin{equation}\label{c^nb<0}
		\norma{Df^n(v)}\leq \alpha^n \norma{v}\text{ for every }v\in E_{q}^{s}.
		\end{equation}Moreover, $Df_{q}(E^{s}_{q})=E^{s}_{f(q)}.$
		\item\label{hyp2} If $r\in \real^2$ is such that $f^{-1}$ is differentiable at all points $f^{-n}(r)$, $n=0,1,2,...,$ then there exists a one-dimensional subspace $E^{u}_{r}\subset T_{r}$ such that
		\begin{equation}\label{b/c^nb<0}
		\norma{Df^n(v)}\geq \beta^n \norma{v}\text{ for every }v\in Df^{-n}(E_{r}^{u}).
		\end{equation}Moreover, $Df^{-1}_{r}(E^{u}_{r})=E^{u}_{f^{-1}(r)}.$
		\item\label{hyp3} if $r\in \real^2$ is such that $f$ is differentiable at all points $f^{n}(r)$, $n=0,\pm 1,\pm 2,...,$then $T_{r}=E^{s}_{r}\oplus E^{u}_{r}$.
	\end{enumerate}
\end{thm}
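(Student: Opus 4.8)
The plan is to build the invariant splitting directly from the invariant cone families of Lemma~\ref{inv-cones}, following Misiurewicz's scheme but now with the constants $\alpha,\beta$ in place of $c,b/c$. For part~\ref{hyp1}, given $q$ whose entire forward orbit avoids the $y$-axis (so that $f$ is differentiable along it), I would consider the pulled-back stable cones
\[
K_{n}:=\big(Df^{n}_{q}\big)^{-1}\big(C^{s}_{f^{n}(q)}\big)\subset T_{q},\qquad n\geq 0,
\]
where $Df^{n}_{q}=Df_{f^{n-1}(q)}\circ\cdots\circ Df_{q}$. By the stable-cone clause of Lemma~\ref{inv-cones}, $Df^{-1}_{f(z)}$ carries $C^{s}_{f(z)}$ into $C^{s}_{z}$, so the $K_{n}$ form a nested decreasing sequence of nonempty closed symmetric cones, and I would set $E^{s}_{q}:=\bigcap_{n\geq 0}K_{n}$.

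Then I would show $E^{s}_{q}$ is exactly a line in three steps. \textbf{Nonemptiness:} intersecting each $K_{n}$ with the unit circle of $T_{q}$ gives a nested family of nonempty compact arcs, whose intersection is nonempty, so $E^{s}_{q}\neq\{0\}$. \textbf{Contraction estimate \eqref{c^nb<0}:} if $v\in E^{s}_{q}$ then $Df^{n}(v)\in C^{s}_{f^{n}(q)}$ for all $n$, so the ``expands by at least $1/\alpha$'' clause of Lemma~\ref{inv-cones}, applied to the vector $Df^{n}(v)$ at the point $f^{n-1}(q)$, gives $\norma{Df^{n}(v)}\leq\alpha\norma{Df^{n-1}(v)}$, whence $\norma{Df^{n}(v)}\leq\alpha^{n}\norma{v}$ by induction. \textbf{At most one dimension:} here the area estimate enters. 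If $E^{s}_{q}$ contained independent $v_{1},v_{2}$ spanning a parallelogram of area $A_{0}>0$, its image under $Df^{n}_{q}$ would have area $|\det Df^{n}_{q}|\,A_{0}=(\alpha\beta)^{n}A_{0}$, using $|\det Df_{z}|=|b|=\alpha\beta$; but the contraction bound forces that area to be $\leq\alpha^{2n}\norma{v_{1}}\norma{v_{2}}$, so $(\beta/\alpha)^{n}\leq\norma{v_{1}}\norma{v_{2}}/A_{0}$, which is impossible for large $n$ since $\beta/\alpha>1$. As a nonzero symmetric cone containing no two independent vectors, $E^{s}_{q}$ is a one-dimensional subspace. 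Invariance $Df_{q}(E^{s}_{q})=E^{s}_{f(q)}$ then follows by reindexing: applying $Df_{q}$ drops the $n=0$ constraint, giving $Df_{q}(E^{s}_{q})\subseteq E^{s}_{f(q)}$, and since both are lines this inclusion is equality.

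Part~\ref{hyp2} is the mirror image built from the backward orbit: I would set $E^{u}_{r}:=\bigcap_{n\geq 0}Df^{n}_{f^{-n}(r)}\big(C^{u}_{f^{-n}(r)}\big)$, using that $Df_{z}$ maps $C^{u}_{z}$ into $C^{u}_{f(z)}$ and expands by at least $\beta$; the same compactness and determinant arguments produce a line with $Df^{-1}_{r}(E^{u}_{r})=E^{u}_{f^{-1}(r)}$, while the expansion clause yields $\norma{Df^{-n}(u)}\leq\beta^{-n}\norma{u}$ for $u\in E^{u}_{r}$, which is exactly \eqref{b/c^nb<0} after the reindexing $v=Df^{-n}(u)$. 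Finally part~\ref{hyp3}: when $f$ is differentiable along the whole two-sided orbit both lines exist, and since $E^{s}_{r}\subset C^{s}_{r}$ and $E^{u}_{r}\subset C^{u}_{r}$ while $C^{s}_{r}\cap C^{u}_{r}=\{0\}$ (a common vector would satisfy $|t|\leq|r|/\beta$ and $|r|\leq\alpha|t|$, forcing $\alpha/\beta\geq 1$), the two lines are transverse and span $T_{r}$. The main obstacle is the one-dimensionality of the intersected cone; the cleanest route is the determinant/area comparison above, which sidesteps any delicate estimate of how the cone angles shrink and relies only on $\beta/\alpha>1$ together with $|\det Df|=\alpha\beta$.
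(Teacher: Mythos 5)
Your proposal is correct, and its skeleton is the paper's: both build $E^{s}_{q}=\bigcap_{n\geq 0}Df^{-n}_{f^{n}(q)}\big(C^{s}_{f^{n}(q)}\big)$ and $E^{u}_{r}=\bigcap_{n\geq 0}Df^{n}_{f^{-n}(r)}\big(C^{u}_{f^{-n}(r)}\big)$ from the invariant cones of Lemma \ref{inv-cones}, read off the estimates \eqref{c^nb<0} and \eqref{b/c^nb<0} from the cone contraction/expansion clauses, get invariance from non-degeneracy of $Df$ plus one-dimensionality, and settle part \ref{hyp3} by noting that the stable and unstable cones meet only at the origin. The one step where you genuinely diverge is the proof that the intersected cone is exactly one-dimensional. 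The paper argues softly: $E^{s}_{q}$ sits inside $J^{s}_{q}=\{v\in T_{q}:\lim_{n\to\infty}\|Df^{n}(v)\|=0\}$, which is a vector subspace, and $J^{s}_{q}\neq T_{q}$ because vectors of the unstable cone at $q$ are expanded by Lemma \ref{inv-cones}; hence $J^{s}_{q}$, and with it $E^{s}_{q}$, is at most a line. You instead run an area comparison: two independent vectors in $E^{s}_{q}$ would span a parallelogram whose image under $Df^{n}_{q}$ has area exactly $(\alpha\beta)^{n}A_{0}$ (since $|\det Df|=|b|=\alpha\beta$) but at most $\alpha^{2n}\|v_{1}\|\|v_{2}\|$ by the contraction bound, contradicting $\beta/\alpha>1$. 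Both are valid; your determinant argument is more quantitative and self-contained (for part \ref{hyp1} it never invokes the unstable cones), while the paper's argument avoids determinants and recycles the cone expansion. Two small bookkeeping points you should make explicit: your one-dimensionality conclusion uses that $E^{s}_{q}$ is symmetric and scale-invariant (true, as it is an intersection of linear preimages of scale-invariant cones), and in part \ref{hyp2} the application of Lemma \ref{inv-cones} along the backward orbit needs the paper's observation that differentiability of $f^{-1}$ at $f^{-n}(r)$ means $f^{-n}(r)_{y}\neq 0$, hence $f^{-n-1}(r)_{x}\neq 0$, which is exactly the hypothesis under which the lemma applies at those points.
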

\begin{proof}
We are ready to construct the stable and unstable subspaces of the tangent space. Take points $q$ and $r$ such that $f$ is differentiable at $f^{n}(q)$ and $f^{-1}$ is differentiable at $f^{-n}(r)$, $n\in\nat$. Therefore, $f^{n-1}(q)_{x}\neq 0$ for $n\geq 1$ and we can apply Lemma \ref{inv-cones} to the sequence $\{f^{n-1}(q)\}_{n\geq 1}$. We then obtain $Df_{f(f^{n-1}(q))}^{-1}=Df_{f^{n}(q)}^{-1}$ maps the stable cone $C^{s}_{f^{n}(q)}$ into $C^{s}_{f^{n-1}(q)}$ and expands all its vectors by a factor of at least $1/\alpha$. As $f^{-1}$ is differentiable at $f^{-n}(r)$ for every $n\geq 0$, we have $f^{-n}(r)_{y}\neq0$ and so $f^{-n-1}(r)_{x}\neq0$. We can then apply Lemma \ref{inv-cones} to the sequence $\{f^{-n}(r)\}_{n\geq1}$. We learn that $Df_{f^{-n}(r)}$ maps the unstable cone $C^{u}_{f^{-n}(r)}$ into $C^{u}_{f^{-n+1}(r)}$ and expands all its vectors by a factor of at least $\beta$. Note that both $1/\alpha$ and $\beta$ are larger then 1. Define
\[ 
E^{s}_{q}=\bigcap_{n=0}^{\infty}Df^{-n}_{f^{n}(q)}(C^{s}_{f^{n}(q)}),
\]
\[ 
 E^{u}_{r}=\bigcap_{n=0}^{\infty}Df^{n}_{f^{-n}(r)}(C^{u}_{f^{-n}(r)}).
\]The inequalities \eqref{c^nb<0} and \eqref{b/c^nb<0} follow from the properties of stable and unstable cones. The sets $E^{s}_{q}$ and $E^{u}_{r}$ are intersections of descending sequences of non-empty closed cones, therefore, they both contain one-dimensional spaces. We have 
\[ E^{s}_{q}\subset J^{s}_{q}=\{v\in T_{q}:~\lim_{n\to\infty}\norma{Df^{n}(v)}=0\}\text{ and  } \] \[ E^{u}_{r}\subset J^{u}_{r}=\{v\in T_{r}:~\lim_{n\to\infty}\norma{Df^{-n}(v)}=0\}. \] Moreover, $J^{s}_{q}$ and $J^{u}_{r}$ are vector spaces. Hence, in order to show that $E^{s}_{q}$ and $E^{u}_{r}$ are one dimensional spaces, it is enough to show that the sets $T_{q}\setminus J^{s}_{q}$ and $T_{r}\setminus J^{u}_{r}$ are non-empty. But they contain interiors of unstable and stable cones, respectively. Since $Df_{q}$ and $Df_{r}^{-1}$ are non-degenerate and the spaces $E^{s}_{q}$, $E^{s}_{f(q)}$, $E^{u}_{r}$ and $E^{u}_{f^{-1}(r)}$ are one-dimensional, we have $Df_{q}(E^{s}_{q})=E^{s}_{f(q)}$ and $Df_{r}^{-1}(E^{u}_{r})=E^{u}_{f^{-1}(r)}$. This completes the proof of statements \ref{hyp1} and \ref{hyp2}.

Statement \ref{hyp3} follows from the fact that the intersection of stable and unstable cones is 
\[ \Bigg\{\binom{0}{0}\Bigg\}. \qedhere\]\end{proof}

Since $f$ is linear in the left and right half-planes and $f^{-1}$ is linear in the lower and upper half-planes, the following proposition follows immediately from Theorem \ref{hyp}.

\begin{prop}[see {\cite[Prop. 3.]{strange-attractor-mis}}]\label{proposition 3}
	Let $a$, $b$ satisfy \ref{c-1}. Then, for a point $q\in \real^{2}$, we have 
	\begin{enumerate}[label=(\roman*)]
		\item if $d(f^{n}(q), \text{vertical axis})\geq \theta \alpha^{n}$ for some $\theta>0$ and all $n\in\nat$, then the segment $\{q+tv^{s}_{q}:~|t|<\theta \}$ (where $v^{s}_{q}$ is a unit vector in the stable direction at $q$) is a local stable manifold at $q$,
		\item if $d(f^{-n}(q), \text{horizontal axis})\geq \theta /\beta^{n}$ for some $\theta>0$ and all $n\in\nat$, then the segment $\{q+tv^{u}_{q}:~|t|<\theta \}$ (where $v^{u}_{q}$ is a unit vector in the unstable direction at $q$) is a local unstable manifold at $q$.
	\end{enumerate}
\end{prop}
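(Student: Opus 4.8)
The plan is to leverage the piecewise-affine structure of $f$ together with the splitting produced by Theorem~\ref{hyp}. I treat assertion (i) in detail; (ii) is its mirror image. First note that $\theta>0$ and $\alpha^n>0$ force $|f^n(q)_x|=d(f^n(q),\text{vertical axis})\geq\theta\alpha^n>0$ for all $n\in\nat$, so $f$ is differentiable along the forward orbit of $q$; thus Theorem~\ref{hyp}\ref{hyp1} applies and yields a stable line $E^s_q$ with $\norma{Df^n(v)}\leq\alpha^n\norma{v}$ for $v\in E^s_q$. Let $v^s_q$ be a unit spanning vector and set $S=\{q+tv^s_q:|t|<\theta\}$.

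The heart of the argument is the inductive claim that $f^n|_S$ is affine, so that
\[
f^n(S)=\{f^n(q)+t\,Df^n(v^s_q):|t|<\theta\}
\]
is an open segment centred at $f^n(q)$, directed along $E^s_{f^n(q)}$, of half-length $\theta\norma{Df^n(v^s_q)}\leq\theta\alpha^n$, and contained in a single open half-plane $\{x>0\}$ or $\{x<0\}$. Here the distance hypothesis does the work: by the invariance $Df(E^s)=E^s$, the unit direction of $f^n(S)$ lies in the stable cone $C^s_{f^n(q)}$, so its $x$-component is at most $1$ in modulus and the horizontal extent of $f^n(S)$ is bounded by its half-length $\theta\alpha^n$; since $f^n(q)$ sits at distance $\geq\theta\alpha^n$ from the vertical axis, the whole segment avoids that axis. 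Consequently $f$ acts on $f^n(S)$ through one of its two affine branches, carrying $E^s_{f^n(q)}$ to $E^s_{f^{n+1}(q)}$ and contracting lengths exactly as recorded in Theorem~\ref{hyp}, which closes the induction.

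Granting the claim, every $r=q+t_0v^s_q\in S$ satisfies $f^i(r)=f^i(q)+t_0\,Df^i(v^s_q)$, whence $d(f^i(q),f^i(r))=|t_0|\,\norma{Df^i(v^s_q)}\leq|t_0|\alpha^i<\theta$ for all $i$ and tends to $0$ as $i\to\infty$; this is precisely the defining property of a local stable manifold with $\epsilon=\theta$. For assertion (ii) I would run the same induction backwards using that $f^{-1}$ is affine on the upper and lower half-planes: the bound $d(f^{-n}(q),\text{horizontal axis})\geq\theta/\beta^n$ makes $f^{-1}$ differentiable along the backward orbit, Theorem~\ref{hyp}\ref{hyp2} supplies $E^u_q$ with $\norma{Df^{-n}(v)}\leq\beta^{-n}\norma{v}$ for $v\in E^u_q$, and, writing $S^u=\{q+tv^u_q:|t|<\theta\}$ for a unit $v^u_q\in E^u_q$, since a unit vector of the unstable cone has $y$-component at most $1$, the identical induction shows each $f^{-n}(S^u)$ stays a segment of half-length $\leq\theta\beta^{-n}$ off the horizontal axis.

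The main obstacle is the matching of rates in the inductive step: one must verify that the transverse (horizontal, resp.\ vertical) extent of the iterated segment, which shrinks like $\alpha^n$ (resp.\ $\beta^{-n}$), never exceeds the lower bound $\theta\alpha^n$ (resp.\ $\theta/\beta^n$) on the distance from its centre to the relevant axis. This balance is exactly what the exponents in the hypotheses are engineered to guarantee, and it is the only point where piecewise-linearity enters in an essential way; the remaining invariance and contraction statements are read off directly from Theorem~\ref{hyp}.
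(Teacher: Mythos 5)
Your proposal is correct and takes essentially the same approach as the paper: the paper offers no detailed proof at all, stating only that the proposition ``follows immediately from Theorem~\ref{hyp}'' because $f$ is linear on the left and right half-planes and $f^{-1}$ on the upper and lower half-planes. Your induction---the distance hypothesis keeps each (open) iterated segment, whose half-length contracts at rate $\alpha^{n}$ (resp.\ $\beta^{-n}$), strictly inside a single half-plane of affinity, so the contraction estimates of Theorem~\ref{hyp} propagate step by step---is precisely the argument that remark compresses.
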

Let $K$ be the region to the right of the line passing through $Y$ in its stable direction; i. e.
\begin{equation}\label{K}
K=\Big\{(x,y):~	y-Y_{y}+\beta(x-Y_{x})\geq 0\Big\}.
\end{equation}

\begin{lem}[see {\cite[Lem. 2.]{strange-attractor-mis}}]\label{lemma 2}
	Let $a$, $b$ satisfy \ref{c3}, then 
	\begin{enumerate}[label=(\roman*)]
			\item \label{invariance}$ f^{-1}(K)\subset K $
			\item \label{intersection}there exists a constant $\gamma>0$ such that, for every $y\leq 1$, the intersection of the horizontal line $\real\times\{y\}$ with the set $ f^{-1}(K) $ has length bounded by $\gamma$
			\item \label{subset}$ G\subset K $
	\end{enumerate}
\end{lem}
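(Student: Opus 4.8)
The plan is to reduce all three claims to a single affine identity describing how the linear functional $g(x,y):=(y-Y_{y})+\beta(x-Y_{x})$, for which $K=\{g\geq 0\}$ by \eqref{K}, transforms under $f$. First I would record the eigenvalue relations $\beta^{2}=a\beta+b$, $\alpha\beta=-b$, $\alpha+\beta=a$ and $0<\alpha<1<\beta<a$. Because $Y$ is fixed and lies in the left half-plane, on $\{x\leq 0\}$ the map $f$ is the affine branch fixing $Y$, and the expansion of $g\circ f$ around $Y$ collapses (using $\beta a+b=\beta^{2}$) to
\[
g(f(x,y))=\beta\, g(x,y)\qquad (x\leq 0).
\]
On $\{x>0\}$ the other affine branch gives $g(f(x,y))=\beta g(x,y)-2a\beta x+C_{0}$, and the constant $C_{0}=\beta+(\beta-1)(\beta Y_{x}+Y_{y})$ vanishes precisely because $\beta^{2}-a\beta-b=0$, so
\[
g(f(x,y))=\beta\, g(x,y)-2a\beta x\qquad (x>0).
\]
These two formulas are the engine for everything that follows.

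Next I would read off the exact shape of $f^{-1}(K)=\{q:g(f(q))\geq 0\}$. On $\{x\leq 0\}$ the first identity gives $f^{-1}(K)\cap\{x\leq 0\}=K\cap\{x\leq 0\}$, while on $\{x>0\}$ the second gives $f^{-1}(K)\cap\{x>0\}=\{x>0:\ g\geq 2ax\}$. For the latter set $g\geq 2ax>0$, so it lies in the interior of $K$; together with the left piece this yields $f^{-1}(K)\subset K$, which is \ref{invariance}. The same description shows that $f^{-1}(K)$ is the region lying above the broken line through $(0,c_{1})$, $c_{1}=\beta Y_{x}+Y_{y}$, whose left arm has slope $-\beta$ and whose right arm has slope $2a-\beta$; since $\beta<a$ we have $2a-\beta>0$, so this broken line is a genuine upward-opening valley.

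For \ref{intersection} I would intersect $f^{-1}(K)$ with the horizontal line at height $y$: it is empty for $y<c_{1}$ and otherwise the segment from $x=(c_{1}-y)/\beta$ to $x=(y-c_{1})/(2a-\beta)$, of length $(y-c_{1})\bigl(\tfrac{1}{\beta}+\tfrac{1}{2a-\beta}\bigr)$, which increases with $y$. Restricting to $y\leq 1$ therefore bounds the length by $\gamma:=\max\{0,1-c_{1}\}\bigl(\tfrac{1}{\beta}+\tfrac{1}{2a-\beta}\bigr)$; the hypothesis $y\leq 1$ is exactly what is needed for finiteness, since the valley opens upward. Finally, for \ref{subset} I would use convexity of the half-plane $K$, so that $G=\Delta YAZ\subset K$ follows from placing its three vertices in $K$: both $Y$ and $Z$ lie on $\partial K=\stmani{Y}$ (by definition $Z$ is the intersection of the stable line with the line containing $\seg{AB}$), so $g=0$ there, while substituting $A=(Y_{x}(1-\beta),0)$ gives $g(A)=(\beta^{2}+b)/(a+b-1)=\beta(\beta-\alpha)/(a+b-1)>0$.

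I do not expect a deep obstacle here; the work is bookkeeping. The points requiring care are keeping the two affine branches separate, checking the signs $2a-\beta>0$ and $\beta^{2}+b>0$ from the eigenvalue identities, and confirming the vanishing of the constant $C_{0}$ on the right branch — the one computation genuinely worth verifying, since it is what makes both \ref{invariance} and the clean valley shape work. Condition \ref{c3} itself is needed only indirectly, to guarantee that the configuration built in the previous section (existence and placement of $A$, $B$, $Z$, and hence of the triangle $G$, via Lemma \ref{BliesaboveWsY} and Lemma \ref{DeltaYAZisINVARIANT}) is the one assumed; the core inequalities already hold under \ref{c-1}.
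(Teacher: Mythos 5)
Your proposal is correct and follows essentially the same route as the paper: both identify $f^{-1}(K)$ explicitly as the region above an upward-opening broken line whose left arm coincides with $\fr K$ and whose right arm has positive slope $2a-\beta=a+\alpha$, which yields \ref{invariance} and \ref{intersection} at once (the paper takes $\gamma=\diam\Delta$ for the resulting triangle $\Delta=f^{-1}(K)\cap\{y\leq 1\}$ rather than your explicit length formula), and both reduce \ref{subset} to the position of the triangle $G$ relative to the half-plane $K$ (the paper via Lemma \ref{BliesaboveWsY}, you via convexity and the sign of $g(A)$). The one imprecision is your identification $\fr K=\stmani{Y}$: the stable manifold coincides with the line $\{g=0\}$ only in the left half plane and bends at $(0,\beta Y_{x}+Y_{y})$, but under \ref{c3} the point $Z$ does lie on this straight portion (it sits beyond $B$, which Lemma \ref{BliesaboveWsY} places above the line in the left half plane), so your argument stands as written.
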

\begin{proof}
	We can compute
	\[
	f^{-1}(K)=\left\{(x,y):~y\geq \dfrac{a\beta\sgn x-b}{\beta}x+Y_{x}-1+\dfrac{Y_{y}}{\beta}\right\}.
	\]Using that $\alpha+\beta=a$ and $\alpha\beta=-b,$ we learn that the boundary of $f^{-1}(K)$ in the left half plane coincides with that of $K$ in the left half plane, on the other hand for  $x>0$ the boundary of $f^{-1}(K)$ is a positively directed half line. Hence the region $\Delta=f^{-1}(K)\cap\{(x,y):~y\leq1\}$ is a triangle, $f^{-1}(K)\subset K$ and \ref{intersection} holds with $\gamma=\diam \Delta$. 
	
	Condition \ref{subset} follows from Lemma \ref{BliesaboveWsY}.
\end{proof}

Denote by $K_{\nu}^{s}$ ($K_{\nu}^{u}$) the set of these points  $q\in G$ at which the segment $\{q+tv_{q}^{s}:~|t|<\nu\}$ ($\{q+tv_{q}^{u}:~|t|<\nu\}$) is not a local stable (unstable) manifold at  $q$.
\begin{lem}[see {\cite[Lem. 3.]{strange-attractor-mis}}]\label{lemma 3}
	Let $a$, $b$ satisfy \ref{c3}, then there exists a constant $\delta>0$ such that $\lebmes{K_{\nu}^{s}}\leq\delta\nu$ and $\lebmes{K_{\nu}^{u}}\leq\delta\nu$ for all $\nu>0$.
\end{lem}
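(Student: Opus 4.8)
The plan is to reduce the statement to a summable geometric estimate by way of Proposition \ref{proposition 3}. By part (i) of that proposition, if $d(f^{n}(q),\text{vertical axis})\geq\nu\alpha^{n}$ for every $n\in\nat$, then $\{q+tv_{q}^{s}:\modul{t}<\nu\}$ is a local stable manifold at $q$. Taking the contrapositive, every $q\in K_{\nu}^{s}$ must violate this for some $n$, so
\[
K_{\nu}^{s}\subset\bigcup_{n=0}^{\infty}E_{n}^{s},\qquad E_{n}^{s}:=\{q\in G:~d(f^{n}(q),\text{vertical axis})<\nu\alpha^{n}\},
\]
and symmetrically, using part (ii), $K_{\nu}^{u}\subset\bigcup_{n}E_{n}^{u}$ with $E_{n}^{u}:=\{q\in G:~d(f^{-n}(q),\text{horizontal axis})<\nu\beta^{-n}\}$. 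It therefore suffices to bound $\lebmes{E_{n}^{s}}$ and $\lebmes{E_{n}^{u}}$ by a summable sequence $C\nu\lambda^{n}$ with $\lambda<1$. The common mechanism is that $\modul{\det Df}=\modul{b}=\alpha\beta$ wherever $f$ is differentiable, so $f$ contracts area by $\alpha\beta<1$ and $f^{-1}$ expands it by $(\alpha\beta)^{-1}$.

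For the stable terms I would use forward invariance of $G$ (Lemma \ref{DeltaYAZisINVARIANT}). Writing $S_{n}$ for the vertical strip of width $2\nu\alpha^{n}$ about the $y$-axis, we have $E_{n}^{s}=G\cap f^{-n}(S_{n})$, hence $f^{n}(E_{n}^{s})=f^{n}(G)\cap S_{n}\subset G\cap S_{n}$. Since $G$ is bounded, $\lebmes{G\cap S_{n}}\leq 2\nu\alpha^{n}h$, where $h$ is the vertical extent of $G$. Pulling back by $f^{-n}$ and using $\alpha/(\alpha\beta)=1/\beta$ gives
\[
\lebmes{E_{n}^{s}}=(\alpha\beta)^{-n}\lebmes{f^{n}(E_{n}^{s})}\leq 2h\nu\Big(\frac{\alpha}{\alpha\beta}\Big)^{n}=2h\nu\,\beta^{-n},
\]
and summing the series in $\beta^{-1}<1$ yields $\lebmes{K_{\nu}^{s}}\leq\delta_{s}\nu$ with $\delta_{s}=2h\beta/(\beta-1)$.

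The unstable terms are the crux, and this is exactly where Lemma \ref{lemma 2} enters: $f^{-n}(G)$ need not lie in the bounded set $G$, so the naive strip-area argument fails. Instead, iterating $f^{-1}(K)\subset K$ shows the sets $f^{-m}(K)$ decrease, so $f^{-n}(K)\subset f^{-1}(K)$ for $n\geq1$; combined with $G\subset K$ this gives $f^{-n}(G)\subset f^{-1}(K)$. Letting $R_{n}$ be the horizontal strip $\{\modul{y}<\nu\beta^{-n}\}$, we get $f^{-n}(E_{n}^{u})\subset f^{-1}(K)\cap R_{n}$ for $n\geq1$. When $\nu\leq1$ the strip $R_{n}$ lies in $\{y\leq1\}$, so by part \ref{intersection} of Lemma \ref{lemma 2} every horizontal slice of $f^{-1}(K)\cap R_{n}$ has length at most $\gamma$, whence $\lebmes{f^{-1}(K)\cap R_{n}}\leq 2\gamma\nu\beta^{-n}$. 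Since $f^{-1}$ expands area by $(\alpha\beta)^{-1}$, applying $f^{n}$ and using $(\alpha\beta)/\beta=\alpha$ gives
\[
\lebmes{E_{n}^{u}}=(\alpha\beta)^{n}\lebmes{f^{-n}(E_{n}^{u})}\leq 2\gamma\nu(\alpha\beta)^{n}\beta^{-n}=2\gamma\nu\,\alpha^{n},
\]
while the $n=0$ term $E_{0}^{u}=G\cap\{\modul{y}<\nu\}$ is bounded directly by boundedness of $G$. Summing the series in $\alpha<1$ yields $\lebmes{K_{\nu}^{u}}\leq\delta_{u}\nu$ for $\nu\leq1$, and for $\nu>1$ the bound is trivial since $K_{\nu}^{u}\subset G$ forces $\lebmes{K_{\nu}^{u}}\leq\lebmes{G}\leq\lebmes{G}\nu$. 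Taking $\delta=\max\{\delta_{s},\delta_{u},\lebmes{G}\}$ completes the argument. The main obstacle, and the only genuinely delicate point, is the unstable estimate: without the uniform slice bound $\gamma$ for $f^{-1}(K)$ and the backward invariance $f^{-1}(K)\subset K$ the backward images of $G$ are uncontrolled, and the threshold $y\leq1$ in Lemma \ref{lemma 2}(ii) is what forces the separation of the regimes $\nu\leq1$ and $\nu>1$.
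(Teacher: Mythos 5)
Your proof is correct and follows essentially the same route as the paper's: the same covering of $K_{\nu}^{s}$, $K_{\nu}^{u}$ via Proposition \ref{proposition 3}, the same use of $f(G)\subset G$ and the Jacobian $|b|=\alpha\beta$ for the stable strips, and the same use of Lemma \ref{lemma 2} (backward invariance of $K$ plus the slice bound $\gamma$, with the $\nu\le 1$ versus $\nu>1$ split absorbed into $\max\{\cdot,\lebmes{G}\}$) for the unstable strips. The only differences are cosmetic: explicit constants and a separate treatment of the $n=0$ unstable term, which the paper instead covers by noting $G\subset f^{-1}(K)$.
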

\begin{proof}
	Follows as usual without any unobvious changes. We only have to use hyperbolic constant for the case of $b<0$, that is we have to substitute $\textbf{b}\rightsquigarrow|b|=-b$ as before. For clarity let us write everything down.
	
	By Proposition \ref{proposition 3}, we have $K^{s}_{\nu}\subset\bigcup_{n\in\nat}f^{-n}(L^{s}_{\nu,n})\cap G$ and $K^{u}_{\nu}\subset\bigcup_{n\in\nat}f^{n}(L^{u}_{\nu,n})\cap G$, where $L^{s}_{\nu, n}=\{(x,y)\in\real^{2}:~|x|<\nu\alpha^{n}\}$, $L^{u}_{\nu, n}=\{(x,y)\in\real^{2}:~|x|<\nu/\beta^{n}\}$.
	
	Since $G$ is bounded, there exists a constant $\phi>0$ such that $\lebmes{L^{s}_{\nu, n}\cap G}\leq \nu\phi\alpha^{n}$ for all $\nu$ and $n$. The absolute value of the Jacobian of $f^{-1}$ is equal to $1/|b|$ and we have $f(G)\subset G$. Therefore, $\lebmes{f^{-n}(L^{s}_{\nu, n})\cap G}\leq \lebmes{f^{-n}(L^{s}_{\nu, n}\cap G)}\leq\nu\phi/\beta^{n}$. We have $1/\beta=\alpha/|b|<1$, hence $\lebmes{K^{s}_{\nu}}\leq\nu\phi \dfrac{1}{1-1/\beta}$.
	
	By Lemma \ref{lemma 2} we have 
	\[ 	f(G)\subset G\subset K. \]
 	Hence, $G\subset f^{-1}(K)$. Again by Lemma \ref{lemma 2}, we get $f^{-n}(G)\subset f^{-1}(K)$ for all $n\geq0$. Consequently, $\lebmes{f^{n}(L^{u}_{\nu, n}}\cap G)=|b|^{n}\lebmes{L^{u}_{\nu, n}\cap f^{-n}(G)}\leq\nu\gamma\alpha^{n} $ for all $n\in\nat$ and $0<\nu\leq 1$. Hence, $\lebmes{K^{u}_{\nu}}\leq \nu\gamma/(1-\alpha)$ for all $0<\nu\leq 1$.
	
	Now, we set $\delta=\max\{\phi /(1-1/\beta),\gamma /(1-\alpha),m(G)\}$, and we get $\lebmes{K^{u}_{\nu}}\leq\delta\nu$ and $\lebmes{K^{s}_{\nu}}\leq\delta\nu$ for all $\nu>0$.
\end{proof}
\begin{thm}[see {\cite[Thm. 4.]{strange-attractor-mis}}]\label{hyperbolicity}
	Let $a$, $b$ satisfy \ref{c3}. Then at almost all $q\in G$, there exist linear local stable and unstable manifolds and broken linear global stable and unstable manifolds of infinite length.
\end{thm}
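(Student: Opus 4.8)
The plan is to split the statement into a local assertion and a global one, and to feed the size estimate of Lemma~\ref{lemma 3} into a Borel--Cantelli argument. Throughout I use that $Df_z$ has constant Jacobian $|b|$ and that $f$ is affine on each of $\{x>0\}$, $\{x<0\}$ while $f^{-1}$ is affine on each of $\{y>0\}$, $\{y<0\}$.

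First I would dispose of the local manifolds. A point $q\in G$ admits no linear local stable manifold exactly when the segment $\{q+tv^{s}_{q}:|t|<\nu\}$ fails to be one for \emph{every} $\nu>0$, i.e. $q\in\bigcap_{\nu>0}K^{s}_{\nu}$. The family $K^{s}_{\nu}$ is increasing in $\nu$, since a subsegment of a local stable manifold is again a local stable manifold, so by Lemma~\ref{lemma 3} this intersection has measure $\leq\inf_{\nu>0}\delta\nu=0$; the same holds for $\bigcap_{\nu>0}K^{u}_{\nu}$. Hence off a null subset of $G$ every point carries both a linear local stable and a linear local unstable manifold.

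Next I turn to the global manifolds. By the piecewise affine structure, forward images of a local unstable segment are finite unions of segments broken only on the vertical axis, and backward images of a local stable segment break only on the horizontal axis; thus $\ustmani{q}=\bigcup_{n}f^{n}(\ustlocmani{f^{-n}(q)})$ and $\stmani{q}=\bigcup_{n}f^{-n}(\stlocmani{f^{n}(q)})$ are broken segments. It remains to force infinite length. By Lemma~\ref{inv-cones}, $f$ stretches every segment tangent to the unstable cone by a factor $\geq\beta>1$, so the arc $f^{n}(\ustlocmani{f^{-n}(q)})\subset\ustmani{q}$ through $q$ has length $\geq\beta^{n}\length{\ustlocmani{f^{-n}(q)}}$, which by Proposition~\ref{proposition 3}(ii) is bounded below by a positive multiple of $\inf_{k\geq n}\beta^{k}\dist(f^{-k}(q),\text{horizontal axis})$. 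It therefore suffices to prove that $\beta^{k}\dist(f^{-k}(q),\text{horizontal axis})\to\infty$ for almost every $q$. Put $E^{C}_{k}=\{q\in G:\dist(f^{-k}(q),\text{horizontal axis})\leq C\beta^{-k}\}$. Since $f^{-k}(G)\subset f^{-1}(K)$, the horizontal cross-sections of $f^{-1}(K)$ have length $\leq\gamma$ by Lemma~\ref{lemma 2}(ii), while the Jacobian of $f^{k}$ equals $|b|^{k}$; hence $\lebmes{E^{C}_{k}}\leq 2C\gamma(|b|/\beta)^{k}=2C\gamma\,\alpha^{k}$, which is summable. Borel--Cantelli applied over $C\in\nat$ gives $\beta^{k}\dist(f^{-k}(q),\text{horizontal axis})\to\infty$ a.e.; in particular the whole backward orbit stays off the horizontal axis, so the local unstable manifolds used above genuinely exist, and $\length{\ustmani{q}}=\infty$. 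The stable case is symmetric: $f^{-1}$ expands stable-cone segments by $1/\alpha>1$, and with $h$ the vertical extent of $G$ the sets $\{q\in G:\dist(f^{n}(q),\text{vertical axis})\leq C\alpha^{n}\}$ have measure $\leq 2Ch\,(\alpha/|b|)^{n}=2Ch\,\beta^{-n}$, using the forward-invariance of the bounded set $G$ (Lemma~\ref{DeltaYAZisINVARIANT}) in place of Lemma~\ref{lemma 2}(ii), so the same argument yields $\length{\stmani{q}}=\infty$ a.e.

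I expect the main obstacle to be precisely this last quantitative step. The difficulty is that the local-manifold sizes may degenerate along the orbit, and one must show that the geometric expansion by $\beta$ (respectively $1/\alpha$) outruns any approach of the orbit to the relevant coordinate axis. The summable measure bounds above, obtained from the cross-section estimate of Lemma~\ref{lemma 2} together with the constant Jacobian $|b|$ and the cancellations $|b|/\beta=\alpha$ and $\alpha/|b|=1/\beta$, are exactly what make the Borel--Cantelli argument close and thereby force infinite length for almost every $q\in G$.
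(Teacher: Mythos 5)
Your proof is correct, and its first half (linear local manifolds exist a.e.\ because $\bigcap_{\nu>0}K^{s}_{\nu}$ and $\bigcap_{\nu>0}K^{u}_{\nu}$ are null by Lemma \ref{lemma 3}) is exactly the paper's argument. Where you genuinely diverge is the infinite-length step. The paper handles it in one sentence: a broken stable (unstable) manifold at $q$ of length $\geq\nu$ is obtained by taking a local manifold of length $\nu\alpha^{k}$ (resp.\ $\nu/\beta^{k}$) at $f^{k}(q)$ (resp.\ $f^{-k}(q)$) and applying $f^{-k}$ (resp.\ $f^{k}$); implicitly this rests on reusing Lemma \ref{lemma 3} along the orbit, via $\lebmes{f^{-k}(K^{s}_{\nu\alpha^{k}})}\leq|b|^{-k}\delta\nu\alpha^{k}=\delta\nu\beta^{-k}\to0$ and its unstable counterpart, since mere membership of $q$ in $\tilde G$ does not guarantee that $f^{k}(q)$ carries a local manifold as long as $\nu\alpha^{k}$ for some $k$. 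You instead return to Proposition \ref{proposition 3} and run Borel--Cantelli on the axis-distance sets $E^{C}_{k}$, re-deriving the measure bounds from Lemma \ref{lemma 2}\ref{intersection}, the forward invariance and boundedness of $G$, and the constant Jacobian, using the same cancellations $|b|/\beta=\alpha$ and $\alpha/|b|=1/\beta$ that power Lemma \ref{lemma 3}. In substance the two arguments coincide --- your $E^{C}_{k}$ are, up to relabelling, the sets $f^{k}(L^{u}_{C,k})\cap G$ and $f^{-n}(L^{s}_{C,n})\cap G$ appearing in the proof of Lemma \ref{lemma 3} --- but your packaging buys something real: it makes the paper's terse global step fully quantitative, at the cost of re-proving estimates that Lemma \ref{lemma 3} already encapsulates. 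Two small caveats you should record: Lemma \ref{lemma 2}\ref{intersection} bounds cross-sections only for $y\leq1$, so your estimate on $\lebmes{E^{C}_{k}}$ is valid once $C\beta^{-k}\leq1$, which excludes only finitely many $k$ and leaves summability intact; and the Borel--Cantelli conclusion controls $\dist(f^{-k}(q),\text{horizontal axis})$ only for large $k$, so to ensure the backward (and forward) orbit avoids the axes entirely --- hence that the directions $v^{u}_{q}$, $v^{s}_{q}$ and the local manifolds you push forward exist --- you must additionally discard the null sets $\bigcup_{k}f^{k}(\{y=0\})$ and $\bigcup_{k}f^{-k}(\{x=0\})$, which your parenthetical remark does only implicitly.
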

\begin{proof}
	By the definition of the sets $K^{s}_{\nu}$ and $K^{u}_{\nu}$, linear local stable and unstable manifolds exist for all $q\in \tilde{G}:=G\setminus(\bigcap_{\nu>0}K^{s}_{\nu}\cup \bigcap_{\nu>0}K^{u}_{\nu})$. By Lemma \ref{lemma 3} these manifolds exist almost everywhere in $G$.
	
	A broken linear stable (unstable) manifold at $q\in \tilde G$ of length at least $\nu$ can be obtained by first taking a local linear stable (unstable) manifold of length $\nu\alpha^{k}$ ($ \nu/\beta^{k} $) at $f^{k}(q)$ ($ f^{-k}(q) $) and then its image under $f^{-k}$ ($ f^{k}$).
\end{proof}

\section{Main results}\label{sec:main-results}
We now prove that $\cl\ustmani{X}$ is a strange attractor in a certain parameter region. The main tool in the proof is the renormalization model introduced by Dyi-Shing Ou. He essentially proved, that the first return map to $H_{0}$ has a special form, locally sharing many properties with the Lozi family. From now on, unless otherwise stated, we assume \ref{c6}. Let $H=\bigcup_{n=0}^{\infty}f^{n}(H_{0})$.
\subsection*{The renormalization model}
Note that Ou considered family that is conjugate to the orientation preserving Lozi family in the form presented in this article. Nonetheless, all the properties of renormalization model are preserved. Note first, that it follows from \cite{dyi-shing-ou:critical-points-I} that $H:=\bigcup_{j\geq0}\inter f^{j}(H_{0})$ is, in fact, a finite union, and hence a compact set. Let $p\in\nat$ be the smallest satisfying $H=\bigcup_{0\leq j\leq p}\inter f^{j}(H_{0})$. Moreover, let $q\in\nat$ be the smallest satisfying $\cl W^{u}_{X}\subset H_{X}:=\bigcup_{0\leq j\leq q}\inter f^{j}(H_{0})$. Naturally, $q\leq p$.

Let $U_{1}:=f(H_{0})$. For $n>1$, define inductively $U_{n}:=f(U_{n-1})\cap H_{0}$ and $C_{n}:=f^{-n}(C_{n})$. The numbers $p$ and $q$ can be alternatively defined as the smallest numbers with $H_{0}\cap C_{p}\neq \emptyset$ and $\cl\ustmani{X}\cap C_{q}\neq\emptyset$. We will signify the dependence on parameters by $p=p(a,b)$ and $q=q(a,b)$. On the line $L_{\eta}=\real\times\{\eta\}$, for $\eta\in \real$, we consider the ordering inherited from the projection on $\real$. Let $R_{l}, R_{r}\subset \plane$. We will say that $R_{r}$ is to the right (left) of $R_{l}$ if for any $\eta\in\real$, $R_{r}\cap L_{\eta}$ is greater (lesser) then $R_{l}\cap L_{\eta}$. The existence of the renormalization model can be summarised in the following fashion. Originally results were stated for parameters satisfying $a>-3b+1\text{ and  }-1\leq b \leq 0$, which readily holds whenever \ref{c6} is assumed. Majority of statements made in this section are direct consequences of the construction in \cite{dyi-shing-ou:critical-points-I}. Nonetheless, the construction itself requires more notation than is necessary for our purposes. Hence, we do not present it here and refer the reader to \cite{dyi-shing-ou:critical-points-I} for details.
\begin{prop}\label{prop:renormalization-main}
	The first return map $h\colon H_{0}\to H_{0}$ is well defined and the following holds for $n=2,...,p-1$ (see Fig. \ref{fig:Cn-Un})
	\begin{enumerate}[label=(\roman*)]
		\item\label{renorm:prop1} the sets $C_{n}$, are nonempty rectangles with boundary comprising of segments $\fr C_{n}^{u,-}$, $\fr C_{n}^{u,+}$ of $\ustmani{Y}$, positioned respectively in the lower and upper half plane, and segments $\fr C_{n}^{s,l}$, $\fr C_{n}^{s,r}$ of $\stmani{X}$, with $\fr C_{n}^{s,l}$ to the left of $\fr C_{n}^{s,r}$,
		\item\label{renorm:prop2} $C_{p}$ is a triangle with the boundary comprising of segments $\fr C_{p}^{u,-}$, $\fr C_{p}^{u,+}$ of $\ustmani{Y}$, positioned respectively in the lower and upper half plane, and the segment $\fr C_{p}^{s,l}$ of $\stmani{X}$,  which lies to the left of $\fr C_{p}^{u,-}\cup\fr C_{p}^{u,+}$,
		\item\label{renorm:prop3} $C_{n}$ is to the left of $C_{n+1}$, and $C_{n'}\cap C_{m'}=\fr C_{n'}^{s,r}=\fr C_{m'}^{s,l}$ for $n'=m'-1$ and otherwise empty
		\item\label{renorm:prop4} $h_{n}:=h|_{C_{n}}=f^{n}|_{C_{n}}$ and $h\colon C_{n}\to U_{n}$ is a piecewise affine map for $n=2,...,p$
		\item\label{renorm:prop5} there is a segment $R_{n}\subset \inter C_{n}$ with endpoints on $\fr C_{n}^{u,-}$ and $\fr C_{n}^{u,+}$ for which $h_{n}$ is affine on components $C_{n}^{l}$ and $C_{n}^{r}$ of $C_{n}\setminus R_{n}$. Moreover, $h(R_{n}):=T_{n}\subset \{y=0\}$,
		\item\label{renorm:prop6} $C_{n}^{l}$ is to the left of $C_{n}^{r}$,
		\item\label{renorm:prop7} $C_{n}$ is a union of two rectangles bordering on $R_{n}$
		\item\label{renorm:prop8} $U_{n}$ is a union of two rectangles, namely $U_{n}^{-}:=h(C_{n}^{l})$ and $U_{n}^{+}:=h(C_{n}^{r})$ bordering on $T_{n}$, positioned, respectively, in the lower and upper half planes,
		\item\label{renorm:prop9} $\fr U_{n}^{u,r}:=h(\fr C_{n}^{u,-})$ is to the right of $\fr U_{n}^{u,l}:=h(\fr C_{n}^{u,+})$
	\end{enumerate}
\end{prop}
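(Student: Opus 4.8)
The plan is to obtain the nine items by specializing the renormalization construction of \cite{dyi-shing-ou:critical-points-I} to our map $f$ and triangle $H_0$, and then to verify by hand the few coordinate-dependent assertions---the placement of the folds on the horizontal axis and the left/right and upper/lower orientations---which are particular to the normalization used here. First I would check that every $(a,b)$ satisfying \ref{c6} lies in Ou's admissible region $a>-3b+1$, $-1\le b\le 0$, and that, under the conjugacy identifying our orientation preserving family with the one Ou studies, our $H_0$ is exactly the domain whose first return map he renormalizes; this is what licenses transporting his conclusions. Granting this, the first return map is well defined by Ou's construction, and the finiteness $H=\bigcup_{0\le j\le p}\inter f^{j}(H_0)$ bounds the return time by $p$; the sets $C_n=f^{-n}(U_n)$ are exactly the points of $H_0$ first returning at time $n$, so $\{C_n\}_{2\le n\le p}$ tiles $H_0$ up to shared boundaries and $h|_{C_n}=f^{n}|_{C_n}$, giving \ref{renorm:prop4}.

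For the boundary description in \ref{renorm:prop1}--\ref{renorm:prop3} I would argue that each $U_n$ is cut out of $H_0$ by the side $\seg{DE}\subset\stmani{X}$ together with the long edges of $f^{n}(H_0)$; by forward invariance of $G=\Delta YAZ$ (Lemma \ref{DeltaYAZisINVARIANT}) and $\seg{YA}\subset\ustmani{Y}$ these long edges stretch along and accumulate on $\ustmani{Y}$. Pulling back by $f^{-n}$ and using invariance of the global manifolds (that $f^{-1}$ sends pieces of $\stmani{X}$ into $\stmani{X}$ and $\ustmani{Y}$ into $\ustmani{Y}$) then shows that $C_n$ is bounded on the left and right by arcs of $\stmani{X}$ and above and below by arcs of $\ustmani{Y}$; the extreme column $C_p$ degenerates to a triangle because its right-hand stable side collapses where the two unstable boundaries meet at a tip, which is \ref{renorm:prop2}. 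The left-to-right ordering and the adjacency $C_{n'}\cap C_{m'}=\fr C_{n'}^{s,r}=\fr C_{m'}^{s,l}$ for $n'=m'-1$ then follow from monotonicity of the return-time partition along the stable direction, that is \ref{renorm:prop3}.

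The genuinely coordinate-dependent point is the fold. Since $Df$ is discontinuous exactly on the vertical axis and $f(0,y)=(1+y,0)$, so that $f(\{x=0\})=\{y=0\}$, I would show that within the return window of $C_n$ exactly one iterate crosses $\{x=0\}$ and that this crossing is terminal; its preimage in $C_n$ is the segment $R_n$, the two branches $h_n|_{C_n^{l}}$ and $h_n|_{C_n^{r}}$ are affine, and $T_n=h(R_n)\subset\{y=0\}$, which gives \ref{renorm:prop5}, \ref{renorm:prop7} and the piecewise-affine half of \ref{renorm:prop4}. Splitting $h(C_n)=U_n^{-}\cup U_n^{+}$ across $T_n$ and reading the sign of the second coordinate $bx$ on the two branches places $U_n^{-}$ below and $U_n^{+}$ above the horizontal axis, that is \ref{renorm:prop8}; since $\det Df=-b>0$ each branch is orientation preserving, and tracking this orientation through $f^{n}$ yields the left/right statements \ref{renorm:prop6} and \ref{renorm:prop9}.

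The hard part will be the faithful importation of Ou's construction: checking that the conjugacy identifies the two first-return systems exactly, that \ref{c6} really sits inside his parameter region, and in particular that the single-fold-per-column property and the rectangle-versus-triangle dichotomy at $n=p$ are precisely the combinatorial output of his renormalization rather than artifacts of it. The one estimate I expect to carry out directly is that no column is folded more than once and that its fold is terminal---equivalently, that only the final iterate of each return crosses the vertical axis---which I would deduce from the thinness of $H_0$ about $X$ together with the cone and expansion bounds of Section \ref{sec:hyperbolicity}.
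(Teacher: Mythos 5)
Your proposal takes essentially the same route as the paper: the paper offers no self-contained proof of this proposition, simply observing in the preceding text that \ref{c6} implies Ou's parameter condition $a>-3b+1$, $-1\leq b\leq 0$, and declaring the items direct consequences of the construction in \cite{dyi-shing-ou:critical-points-I}, to which the reader is referred. Your plan (verify parameter admissibility and the conjugacy, transport Ou's construction, then check the fold placement and left/right and upper/lower orientations by hand) is the same citation-based approach, spelled out in somewhat more detail than the paper itself records.
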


Note that Proposition \ref{prop:renormalization-main} does not say much about the image of $C_{p}$. It turns out that the behaviour of $h_{p}\colon C_{p}\to U_{p}$ determines whether $\Aaa=\cl\ustmani{X}$. While it may be possible, presenting more involved argument, to prove $\Aaa=\cl\ustmani{X}$ under some weaker restrictions of the parameter space, we will work assuming that the vertex $A$ of $H_{0}$ is close to heteroclinic tangency with $\stmani{X}$ and $b<0$ is small enough. Intuitively, the parameter $a$ is responsible for the shift to the right of $\Aaa$, while on the parameter $b$ depends diameter of cross-section $H\cap \{y=0\}$. Hence, parameters for which $A$ is close to tangency form an open set accumulating on $(2,0)$. To formalize the above, let us state the following consequence of Proposition 6.1 from \cite{dyi-shing-ou:critical-points-I}.
\begin{lem}\label{lem:prelim-tangency-param}
	One can find sequences $(a_{n})_{n\geq3}$ and $(\epsilon_{n})_{n\geq3}$, with $\lim_{n\to\infty}a_{n}= 2$ and $\lim_{n\to\infty}\epsilon_{n}=0$, such that if $(a,b)\in B_{n}:=\ball{a_{n},0}{\epsilon_{n}}\cap \Uee_{-}$ then either
	\begin{enumerate}[label=(T\arabic*)]
		\item\label{tangency-c1} $q(a,b)=p(a,b)=n$ and $h(H_{0})\cap\{y=0\}\subset C^{r}_{n}$ (see Fig. \ref{fig:before-tangency1} and \ref{fig:before-tangency2}), or
		\item\label{tangency-c2} $q(a,b)=p(a,b)=n+1$ and $h(H_{0})\cap\{y=0\}\subset C^{l}_{n+1}$ (see Fig. \ref{fig:after-tangency1} and \ref{fig:after-tangency2}).
	\end{enumerate}
	Moreover, we have $h(H_{0}\cap\{y=0\})\subset U_{2}$.
\end{lem}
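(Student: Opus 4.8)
The plan is to read the statement off Proposition~6.1 of \cite{dyi-shing-ou:critical-points-I} and then to promote its conclusions, which are phrased for the first-return combinatorics, to a genuine two-dimensional neighbourhood inside $\Uee_{-}$ by a continuity argument. The governing object is the critical value on the horizontal axis: by Proposition~\ref{prop:renormalization-main} each branch $h_{n}$ folds $C_{n}$ along the segment $R_{n}$, whose image $T_{n}$ lies on $\{y=0\}$, and the right-most extent of the cross-section $h(H_{0})\cap\{y=0\}$ is the critical point whose position governs the combinatorics. Ou's construction tracks this tip relative to the segments of $\stmani{X}$ that form the common boundaries $\fr C_{n}^{s,r}=\fr C_{n+1}^{s,l}$ of consecutive domains. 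As $(a,b)\to(2,0)$ one has $\alpha\to 0$, the domains $C_{n}$ accumulate on $\stmani{X}$, and, by Ou's analysis, the number of branches $p=p(a,b)$ grows without bound; Proposition~6.1 supplies, along $b=0$, a sequence of heteroclinic tangency parameters at which the tip meets $\stmani{X}$, and these accumulate at $a=2$. I would take the centres $a_{n}$ to be (or to approximate) these tangency values, so that $a_{n}\to 2$.

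The second step is continuity. The fixed points $X,Y$, the vertex $A$, the eigenvalues $\alpha,\beta$ and hence the triangle $H_{0}$ depend on $(a,b)$ through the explicit formulas of Section~\ref{sec:properties-of-lozi-family}, and on any open set where $p$ is constant the renormalization data $C_{n},U_{n},R_{n},T_{n}$ and the splittings $C_{n}=C_{n}^{l}\cup C_{n}^{r}$ of Proposition~\ref{prop:renormalization-main} vary continuously with $(a,b)$ on $\Uee_{-}$. Consequently both the cross-section $h(H_{0})\cap\{y=0\}$ and the separating segment $\fr C_{n}^{s,r}$ move continuously, and near each $(a_{n},0)$ the tangency locus is a curve cutting the parameter plane into a ``before'' and an ``after'' side. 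On the before side the tip remains to the left of $\fr C_{n}^{s,r}$, so the whole cross-section sits in the right rectangle $C_{n}^{r}$ of the terminal domain and the covering number is $n$; on the after side a new terminal domain $C_{n+1}$ has opened and the cross-section has passed across into $C_{n+1}^{l}$, giving covering number $n+1$. Choosing $\epsilon_{n}>0$ small, with $\epsilon_{n}\to0$, makes $B_{n}=\ball{a_{n},0}{\epsilon_{n}}\cap\Uee_{-}$ split by this locus into the two pieces \ref{tangency-c1} and \ref{tangency-c2} and keeps the cross-section from straddling the boundary. The equality $q=p$ in each case is forced because the tip lies on $\cl\ustmani{X}$ and is precisely the point reaching the terminal domain, so $\cl\ustmani{X}$ is not covered by iterates of $H_{0}$ until all of $H$ is.

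Finally, the ``moreover'' clause reduces to verifying $H_{0}\cap\{y=0\}\subset C_{2}$, i.e. that the base of $H_{0}$ on the horizontal axis returns to $H_{0}$ after exactly two iterates, the vertex $A$ being sent below the axis to $B=f(A)$ and then back into $H_{0}$; then $h=f^{2}$ on this base by Proposition~\ref{prop:renormalization-main} and $h(H_{0}\cap\{y=0\})\subset U_{2}$, uniformly over $B_{n}$ after shrinking $\epsilon_{n}$ if necessary. The main obstacle is twofold: faithfully transporting Ou's Proposition~6.1 through the conjugacy between his family and the form $f_{(a,b)}$ used here, and ruling out the straddling configuration, that is, proving that for small $\epsilon_{n}$ the short cross-section meets only one of $C_{n}$, $C_{n+1}$ and lies entirely within a single rectangle $C_{n}^{r}$ or $C_{n+1}^{l}$. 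This is exactly where the standing assumption is used, namely that $A$ is $C^{1}$-close to heteroclinic tangency with $\stmani{X}$ and that $b$ is small, so that the diameter of the cross-section is controlled by that of $H\cap\{y=0\}$ and stays small compared with its distance to the folds $R_{n}$ flanking the separating stable segment.
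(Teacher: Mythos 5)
Your proposal is correct and follows essentially the same route as the paper: the paper's own proof likewise reads the $b=0$ case off Proposition~6.1 of \cite{dyi-shing-ou:critical-points-I} (restricting to the degenerate line, with the centres $a_{n}$ coming from the tangency parameters there) and then extends to the balls $B_{n}$ by continuity of the point $A$ and of the intersections of $\stmani{X}$ with the $x$-axis. Your write-up is in fact more explicit than the paper's three-sentence proof about the before/after-tangency dichotomy, the non-straddling of the cross-section, and the reduction of the ``moreover'' clause to $H_{0}\cap\{y=0\}\subset C_{2}$, but these elaborations fill in the same continuity argument rather than constituting a different method.
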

\begin{proof}
	For the case of the degenerate Lozi family, that is the case of $b=0$, restricting the parameter set to $\{(a,b)\colon b=0\}$ one obtains the lemma as a direct consequence of Prop. 6.1 from \cite{dyi-shing-ou:critical-points-I}. As the point $A$ and intersections of $\stmani{X}$ with the $x$-axis depend continuously on parameters, we can extend our argument by continuity, proving the lemma. 
\end{proof}
We can now list crucial properties that will determine the behaviour of $h_{p}$.
\begin{lem}\label{lem:behaviour-hp}
	There exists an open set of parameters $\Uee_{\text{tan}}$ with $(2,0)$ in the boundary such that either $U_{p}\subset C_{2}$ (see Fig. \ref{fig:after-tangency1} and \ref{fig:after-tangency2}) or the following holds (see Fig. \ref{fig:before-tangency1} and \ref{fig:before-tangency2})
	\begin{enumerate}[label=(P\arabic*)]
		\item\label{Cp->Up(1)} there is a segment $R_{p}\subset \inter C_{p}$ with endpoints on $\fr C_{p}^{u,-}$ and $\fr C_{p}^{u,+}$ for which $h_{p}$ is affine on components $C_{p}^{l}$ and $C_{p}^{r}$ of $C_{p}\setminus R_{p}$. Moreover, $h(R_{p}):=T_{p}\subset \{y=0\}$,
		\item\label{Cp->Up(2)} $U_{p}$ is a broken triangle, positioned to the right of $U_{n}$, for $n=2,...,p-1$, and comprising of a rectangle $U_{p}^{-}:=h(C_{p}^{l})$ and a triangle $U_{p}^{+}:=h(C_{p}^{r})$, positioned respectively in the lower and upper half planes,
		\item\label{Cp->Up(3)}  the boundary $\fr U_{p}$ comprises of the segment $h(\fr C^{s,l}_{p})\subset \stmani{X}$ and broken segments $\fr U_{p}^{u,l}=h(\fr C_{p}^{u,+})$ and $\fr U_{p}^{u,r}=h(\fr C_{p}^{u,-})$ with the former lying to the left of the latter,
		\item\label{Cp->Up(4)} the triangle $U_{p}^{+}$ connects $C_{p}$ with $C_{2}$ and $h(A)\in C_{2}$.
	\end{enumerate}
\end{lem}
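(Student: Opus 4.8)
The plan is to build $\Uee_{\text{tan}}$ out of the balls supplied by Lemma \ref{lem:prelim-tangency-param}. I would set $\Uee_{\text{tan}}=\bigcup_{n\geq 3}B_{n}$ with $B_{n}=\ball{a_{n},0}{\epsilon_{n}}\cap\Uee_{-}$; each $B_{n}$ is open, being the intersection of an open ball with the open set $\Uee_{-}$, so $\Uee_{\text{tan}}$ is open, and since $a_{n}\to 2$ and $\epsilon_{n}\to 0$ the union accumulates on $(2,0)$, while $(2,0)\notin\Uee_{-}$ places $(2,0)$ on its boundary. For a fixed $(a,b)\in\Uee_{\text{tan}}$ we lie in some $B_{n}$, and Lemma \ref{lem:prelim-tangency-param} offers the dichotomy \ref{tangency-c1}/\ref{tangency-c2}. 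I would match \ref{tangency-c2} (past tangency, $q=p=n+1$, with the $x$-axis crossing of $h(H_{0})$ inside $C^{l}_{n+1}$) to the alternative $U_{p}\subset C_{2}$, and \ref{tangency-c1} (before tangency, $q=p=n$, crossing inside $C^{r}_{n}$) to the alternative (P1)--(P4). In the first case the crossing inside $C^{l}_{n+1}$ prevents the terminal tongue from escaping, and a direct estimate on the affine image $h(C_{p})=f^{p}(C_{p})$, using the positions recorded in Proposition \ref{prop:renormalization-main}, shows $U_{p}\subset C_{2}$.

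Assume henceforth case \ref{tangency-c1}. Properties (P1)--(P3) are the terminal-index analogues of Proposition \ref{prop:renormalization-main}\ref{renorm:prop5}--\ref{renorm:prop9} and can be read off from Ou's renormalization construction \cite{dyi-shing-ou:critical-points-I}. The fold segment $R_{p}$ is the intersection of $C_{p}$ with the appropriate preimage of the critical line $\{x=0\}$; because $C_{p}$ is a triangle by Proposition \ref{prop:renormalization-main}\ref{renorm:prop2}, $R_{p}$ joins its two $\ustmani{Y}$-boundary segments $\fr C_{p}^{u,-}$ and $\fr C_{p}^{u,+}$, and $h_{p}=f^{p}$ is affine on each side; since $f$ carries $\{x=0\}$ into the $x$-axis, $T_{p}=h(R_{p})\subset\{y=0\}$, which is (P1). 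The component $C_{p}^{l}$ maps to a rectangle $U_{p}^{-}$, while $C_{p}^{r}$, the component carrying the apex of the triangle $C_{p}$, maps to a triangle $U_{p}^{+}$, giving the broken-triangle shape; the right-most position relative to the $U_{n}$, $n<p$, and the left/right ordering of the unstable boundary arcs follow from the orderings in Proposition \ref{prop:renormalization-main}\ref{renorm:prop3} and \ref{renorm:prop9} together with the rightward shift induced by $h$. Finally $h(\fr C_{p}^{s,l})\subset\stmani{X}$ by invariance of $\stmani{X}$, and the two outer boundary arcs are images of $\ustmani{Y}$-segments, which is (P3).

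The heart of the matter is (P4), and this is where the near-tangency hypothesis is used. The key observation is that the vertex $A=(Y_{x}(1-\beta),0)$ lies on the $x$-axis, so $A\in H_{0}\cap\{y=0\}$, and the ``moreover'' clause of Lemma \ref{lem:prelim-tangency-param} then pins $h(A)\in U_{2}$ at once. Since $A$ is the right-most vertex of $H_{0}$ and $C_{p}$ is the right-most domain piece, one checks $A\in C_{p}^{r}$, so that $h(A)$ is the tip of the tongue $U_{p}^{+}$. I would then compute $h(A)=f^{p}(A)$ from the affine formula for $h_{p}$ on $C_{p}^{r}$ and compare it with the stable boundary segments $\fr C_{2}^{s,l},\fr C_{2}^{s,r}$ of $C_{2}$: under \ref{tangency-c1} the $x$-axis crossing of $h(H_{0})$ sitting in $C^{r}_{p}$ forces the tongue to stretch leftward from $C_{p}$ across to the region occupied by $C_{2}$, its tip $h(A)$ landing strictly inside $C_{2}$; this is precisely the bridging statement of (P4).

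The step I expect to be the main obstacle is this last one: the terminal piece $C_{p}$ is a triangle rather than a rectangle, so the uniform control enjoyed by the intermediate pieces in Proposition \ref{prop:renormalization-main} does not transfer verbatim, and one must quantify how far the tongue $U_{p}^{+}$ actually reaches. It is here that the closeness of $A$ to the heteroclinic tangency with $\stmani{X}$, together with the smallness of $|b|$ controlling the width of the cross-section $H\cap\{y=0\}$, becomes indispensable: jointly they guarantee that $U_{p}^{+}$ is long and thin enough to join $C_{p}$ to $C_{2}$ while keeping its tip $h(A)$ inside $C_{2}$, and they also ensure that the dichotomy is exhaustive on each $B_{n}$, so that the stated alternatives hold throughout the open set $\Uee_{\text{tan}}$.
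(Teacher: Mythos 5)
Your proposal follows essentially the same route as the paper's own proof: the same choice $\Uee_{\text{tan}}=\bigcup_{n}B_{n}$ from Lemma \ref{lem:prelim-tangency-param}, the same matching of case \ref{tangency-c2} to the alternative $U_{p}\subset C_{2}$ and of case \ref{tangency-c1} to \ref{Cp->Up(1)}--\ref{Cp->Up(4)}, with \ref{Cp->Up(1)} read off from Ou's construction, \ref{Cp->Up(2)}--\ref{Cp->Up(3)} from orientation/ordering considerations, and \ref{Cp->Up(4)} from the clause $h(H_{0}\cap\{y=0\})\subset U_{2}$. If anything, you supply slightly more detail than the paper (e.g.\ noting explicitly that $A\in H_{0}\cap\{y=0\}$ and attempting to bridge $h(A)\in U_{2}$ to $h(A)\in C_{2}$, a step the paper leaves implicit), so the proposal is consistent with the published argument.
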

\begin{proof}
	We first restrict the parameter set to the one satisfying \ref{c6}. Applying Lemma \ref{lem:prelim-tangency-param} we obtain sequences $(a_{n})_{n>1}$ and $(\epsilon_{n})_{n>1}$ and corresponding sequence of open balls $B_{n}:= \ball{a_{n},0}{\epsilon_{n}}$. We claim that $\Uee_{\text{tan}}=\bigcup_{n>1}B_{n}$ is the required set of parameters. 
	
	Let us first note that \ref{Cp->Up(1)} follows simply from \cite{dyi-shing-ou:critical-points-I}.
	
	Fix $n>1$ and let $(a,b)\in B_{n}$. We then have $p(a,b)=q(a,b)$. If the case \ref{tangency-c2} occurs, then, using property \ref{renorm:prop2} from Prop. \ref{prop:renormalization-main}, naturally $h(A)$ will be close to $h(\fr C_{p}^{s,l})\subset \seg{ED}$ and so $C_{p}\subset U_{2}$.
	
	Assume now that the case \ref{tangency-c1} occurs. By property \ref{Cp->Up(1)} $U_{p}$ must comprise of a triangle and a rectangle, adjacent to each other along $T_{p}$. The rectangle must be positioned in the lower half plane, since it follows from \cite{dyi-shing-ou:critical-points-I} that $\fr U_{p}^{s,l}=\stmani{X}\cap\fr U_{p}$ lies lower on $\seg{ED}$ then the fixed point $X$. Considering any orientation of $\fr C_{p}$ and taking into account fact that $h$ preserves orientation, the only possibility is that $\fr U_{p}^{u,l}$ is to the left of $\fr U_{p}^{u,r}$.
	
	The last property follows from $h(H_{0}\cap\{y=0\})\subset U_{2}$ assured by Lemma \ref{lem:prelim-tangency-param}.
\end{proof}
\begin{figure}[p]
	\caption{Renormalization in the case \ref{tangency-c2} where $U_{p}\subset C_{2}$}
	\centering
	\includegraphics[width=\textwidth]{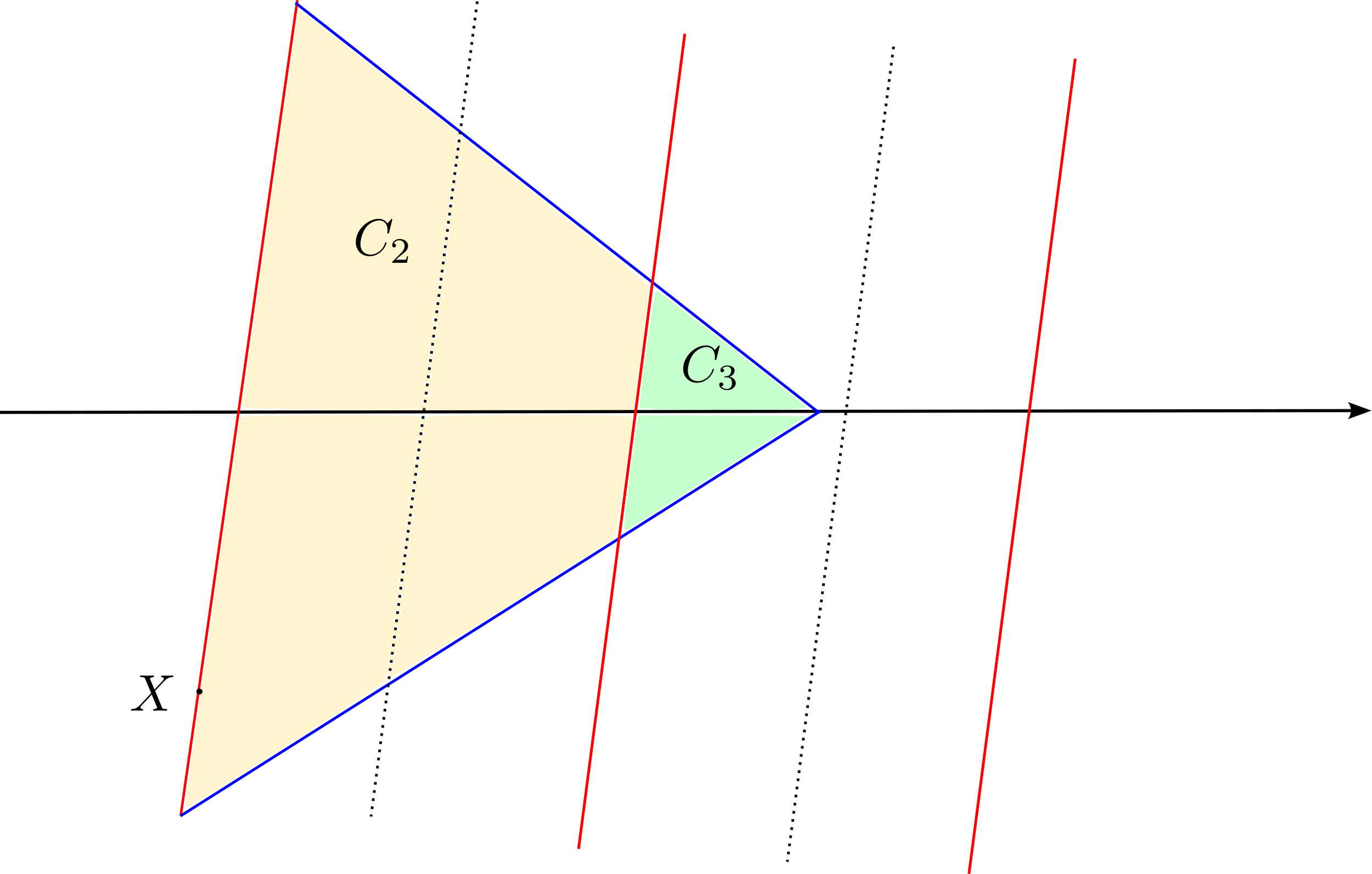}
	\label{fig:after-tangency1}
\end{figure}
\begin{figure}[p]
	\caption{The image of $H_{0}$ under $h$ for the case \ref{tangency-c2}}
	\centering
	\includegraphics[width=\textwidth]{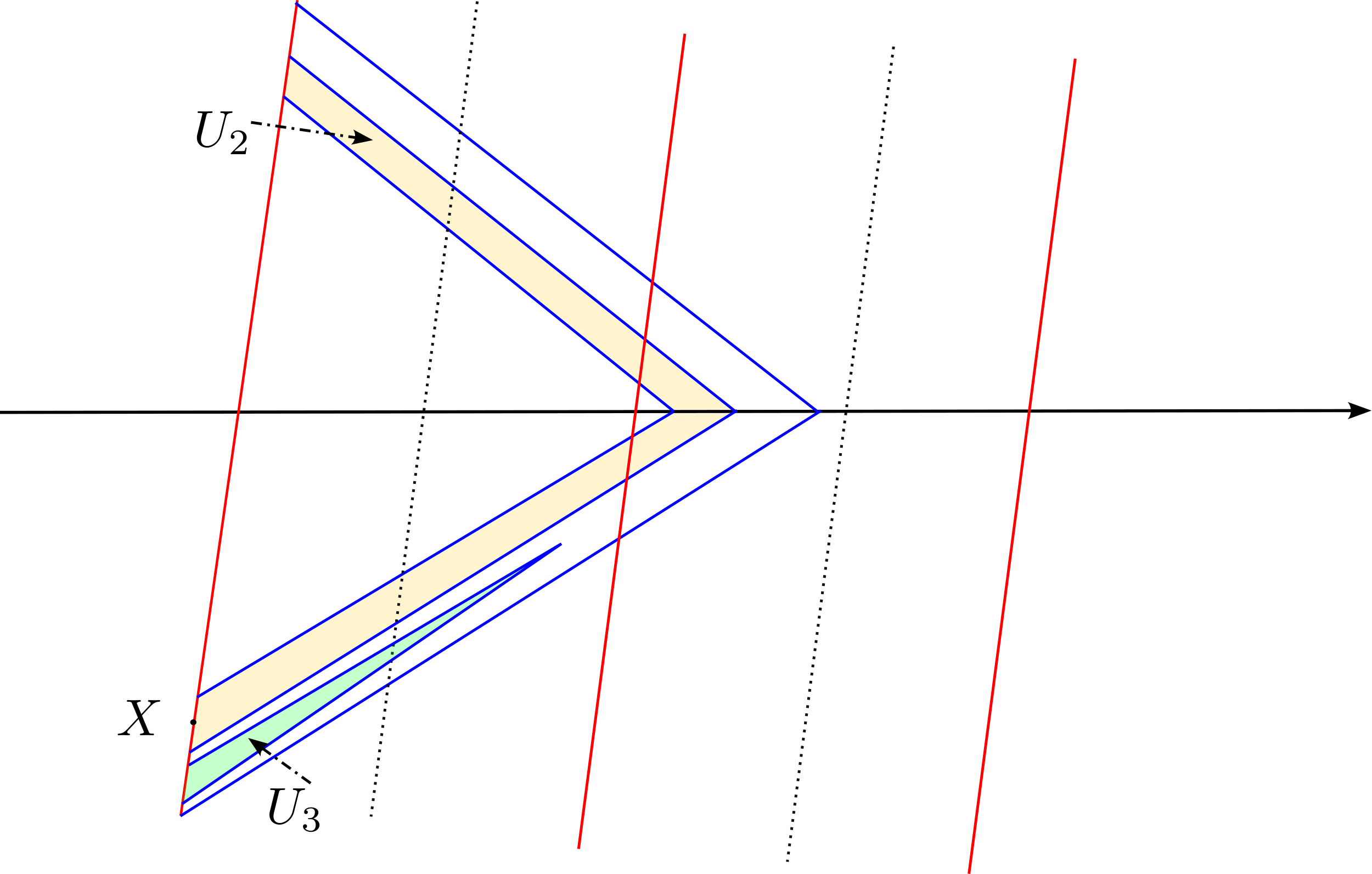}
	\label{fig:after-tangency2}
\end{figure}
\begin{figure}[p]
	\caption{Renormalization in the case alternative to $U_{p}\subset C_{2}$, \ref{tangency-c1}}
	\centering
	\includegraphics[width=\textwidth]{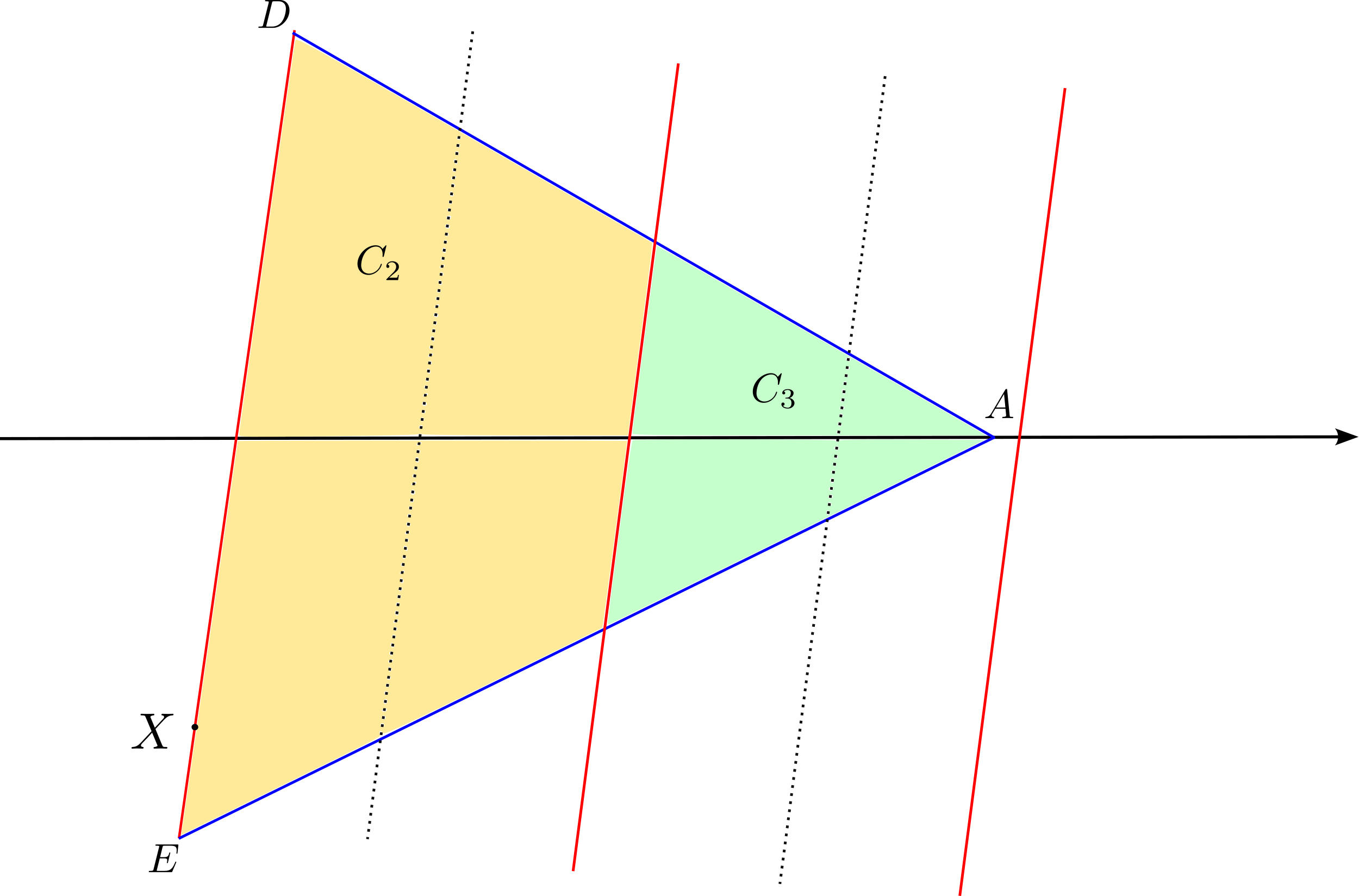}
	\label{fig:before-tangency1}
\end{figure}
\begin{figure}[p]
	\caption{The image of $H_{0}$ under $h$, the case \ref{tangency-c1}. The purple dashed line signifies the region $W'$}
	\centering
	\includegraphics[width=\textwidth]{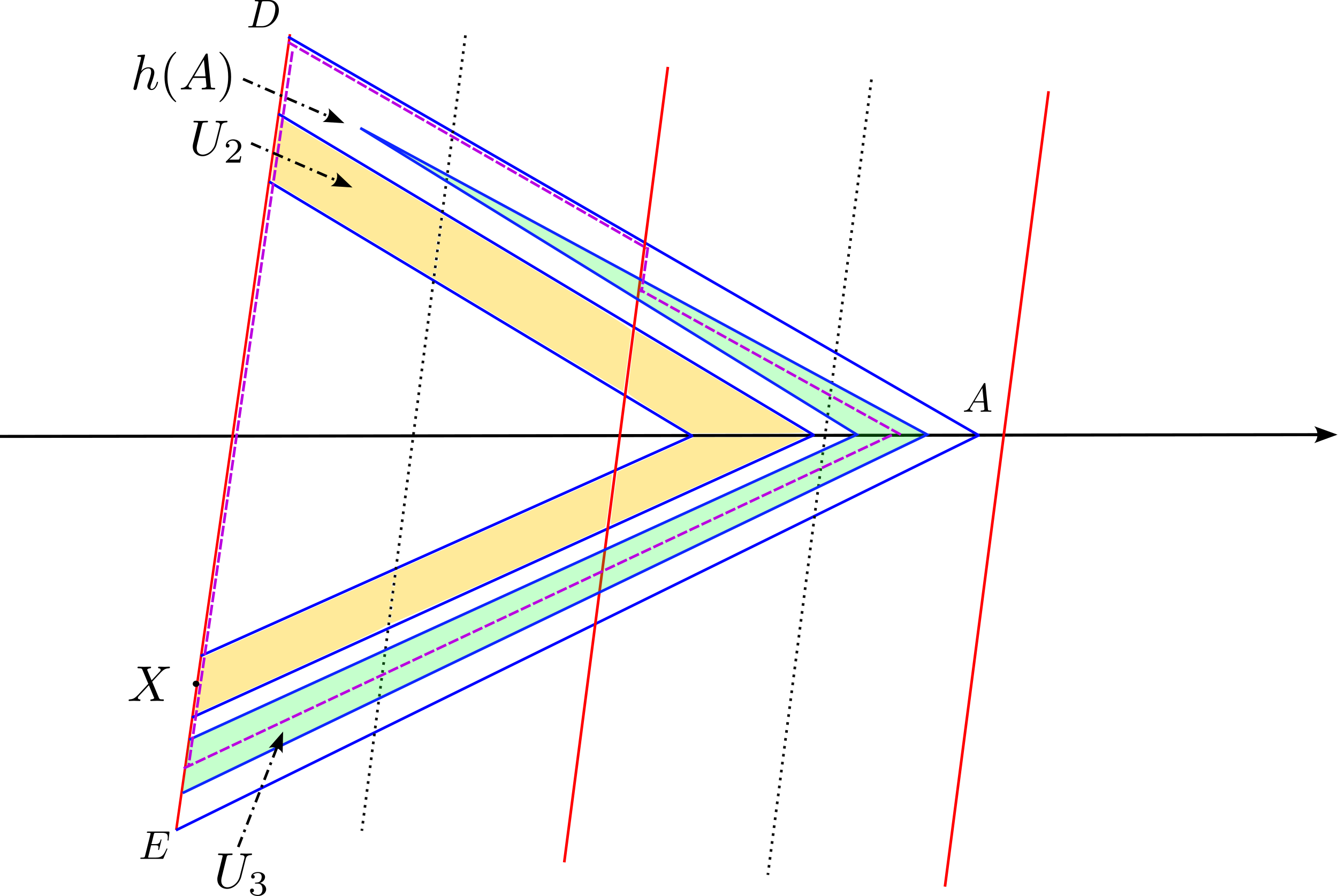}
	\label{fig:before-tangency2}
\end{figure}
\begin{figure}[p]
	\caption{The set $C_{2}$ together with its image $U_{2}$}
	\centering
	\includegraphics[width=\textwidth]{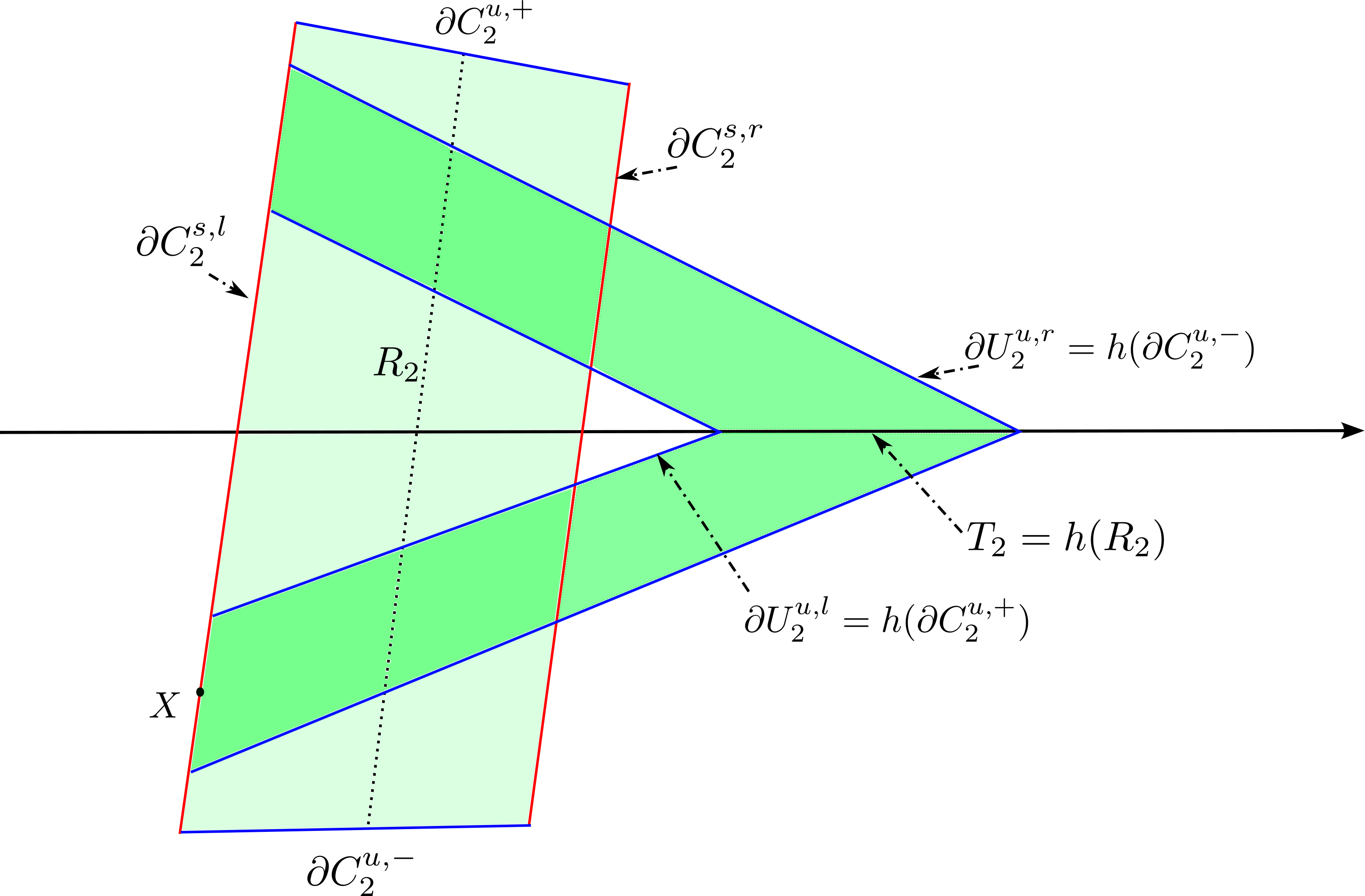}
	\label{fig:Cn-Un}
\end{figure}
\begin{prop}
	For parameters in $\Uee_{\text{tan}}$ we have $\bigcap_{i\in\nat}h^{i}(H_{0})=H_{0}\cap\cl \ustmani{X}$.
\end{prop}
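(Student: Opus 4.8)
The plan is to prove the two inclusions separately. Observe first that since $h(H_{0})\subset H_{0}$, the sets $h^{i}(H_{0})$ form a decreasing nested family of compacta, so $\bigcap_{i}h^{i}(H_{0})$ is their Hausdorff limit; it therefore suffices to control the two one-sided Hausdorff distances between $h^{i}(H_{0})$ and $H_{0}\cap\cl\ustmani{X}$.

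For the inclusion $\supseteq$ I would argue by invariance. Since $X$ is fixed, $f(\ustmani{X})=\ustmani{X}$, whence $f(\cl\ustmani{X})=\cl\ustmani{X}$ as $f$ is a homeomorphism. By construction $\cl\ustmani{X}\subset H_{X}\subset H=\bigcup_{0\le j\le p}\inter f^{j}(H_{0})$, so every $z\in H_{0}\cap\cl\ustmani{X}$ has a first forward $f$-return to $H_{0}$ and a first backward $f$-return to $H_{0}$, and by invariance both lie again in $\cl\ustmani{X}$. Consequently $h$ restricts to a bijection (indeed a homeomorphism) of $H_{0}\cap\cl\ustmani{X}$ onto itself, giving $H_{0}\cap\cl\ustmani{X}=h^{i}(H_{0}\cap\cl\ustmani{X})\subset h^{i}(H_{0})$ for every $i$, which is exactly one of the one-sided estimates.

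For the inclusion $\subseteq$ I would combine hyperbolicity with the renormalization geometry. On each domain $C_{n}$ one has $h=f^{n}$ with $n\ge 2$ by Proposition \ref{prop:renormalization-main}\,\ref{renorm:prop4}, so $h^{i}(H_{0})$ is a finite union of images $f^{m}(\,\cdot\,)$ with $m\ge 2i$; by the stable-cone contraction of Lemma \ref{inv-cones} and Theorem \ref{hyp}, any two points of a single strip of $h^{i}(H_{0})$ that differ by a stable-cone vector are moved at most $\diam(H_{0})\,\alpha^{2i}$ apart, so the extent of each strip transverse to the unstable direction is at most $\diam(H_{0})\,\alpha^{2i}\to 0$. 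Hence $\bigcap_{i}h^{i}(H_{0})$ is a union of unstable leaves of zero transverse width. It then remains to pin these leaves to $\cl\ustmani{X}$: by Proposition \ref{prop:renormalization-main}\,\ref{renorm:prop1}--\ref{renorm:prop2} and \ref{renorm:prop9}, together with Lemma \ref{lem:behaviour-hp}, every unstable boundary arc of each $U_{n}$, and hence of every strip of $h^{i}(H_{0})$, lies on $\ustmani{Y}$, so each thin strip is pinched between arcs of $\ustmani{Y}$; combined with the already established $H_{0}\cap\cl\ustmani{X}\subset h^{i}(H_{0})$ this should force $h^{i}(H_{0})\subset \ball{\cl\ustmani{X}}{\epsilon_{i}}$ with $\epsilon_{i}\to 0$, the second one-sided estimate.

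The hard part will be this last identification, namely showing that the surviving zero-width lamination is exactly $\cl\ustmani{X}$ and not a strictly larger $h$-invariant lamination whose leaves lie on $\ustmani{Y}$ but off the attractor. This is precisely where the delicate behaviour of the critical return $h_{p}\colon C_{p}\to U_{p}$ enters. Lemma \ref{lem:behaviour-hp} (the case $U_{p}\subset C_{2}$, and the broken-triangle case in which by \ref{Cp->Up(4)} the tip $A\in\ustmani{Y}$ satisfies $h(A)\in C_{2}$, dictated by the tangency cases \ref{tangency-c1}--\ref{tangency-c2} of Lemma \ref{lem:prelim-tangency-param}) guarantees that the image of $A$ re-enters $C_{2}$, so that the boundary arcs of consecutive strips connect up and no invariant gap transverse to the unstable foliation can persist. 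Equivalently, it yields $\ustmani{Y}\subset\cl\ustmani{X}$ along the attractor, which is what allows the pinching argument to close and delivers $\bigcap_{i}h^{i}(H_{0})\subset\cl\ustmani{X}$; intersecting with $H_{0}$ gives the desired inclusion.
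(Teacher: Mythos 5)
Your inclusion $\supseteq$ is fine (the paper treats it as immediate: it just notes it suffices to prove the other inclusion), and your opening move for $\subseteq$ — hyperbolic/area contraction collapsing the transverse width of the surviving strips, so that $\bigcap_{i}h^{i}(H_{0})$ is a zero-width lamination — is also the same mechanism the paper uses. But the step you yourself flag as ``the hard part'' is a genuine gap, and your proposed resolution does not close it. You pinch the strips between arcs of $\ustmani{Y}$ (which is what Proposition \ref{prop:renormalization-main} gives you, since the unstable boundaries $\fr C_{n}^{u,\pm}$ and their $h$-images lie on $\ustmani{Y}$), and then assert that Lemma \ref{lem:behaviour-hp}, via $h(A)\in C_{2}$, ``yields $\ustmani{Y}\subset\cl\ustmani{X}$ along the attractor'' so that ``no invariant gap transverse to the unstable foliation can persist.'' This is not an argument; it is a restatement of what must be proved. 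The statement that the $\omega$-limit of the $\ustmani{Y}$-arcs bounding $H$ lies in $\cl\ustmani{X}$ is of essentially the same depth as the proposition itself, it is nowhere established in the paper, and nothing in Lemma \ref{lem:behaviour-hp} or Lemma \ref{lem:prelim-tangency-param} delivers it directly: those lemmas only control where the images of $C_{p}$, of $A$, and of $H_{0}\cap\{y=0\}$ land.

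What the paper does instead is to avoid $\ustmani{Y}$ entirely and pin the surviving sets against curves of $\ustmani{X}$ itself. It decomposes $\bigcap_{i}h^{i}(H_{0})$ into itineraries $\Raa(\bar{i},\bar{t})=\bigcap_{k}h^{i_{k}}(C_{t_{k}})$ and runs a case analysis on which indices $t_{k}$ occur infinitely often. Two concrete constructions make this work: (a) Lemma \ref{lem:prelim-tangency-param} produces a curve $\gamma\subset C_{p-1}\cap\ustmani{X}$ (and, in the case \ref{Cp->Up(1)}--\ref{Cp->Up(4)}, a curve $\tau\subset C_{p}\cap\ustmani{X}$) whose $h$-image, together with $\seg{ED}\subset\stmani{X}$, bounds discs $W$, $W'$ with $\fr W\subset\ustmani{X}\cup\stmani{X}$, hence $\omega(W)\subset\cl\ustmani{X}$; every itinerary that visits the low-index $C_{j}$'s infinitely often gets trapped in such a disc. (b) For itineraries eventually locked in $C_{p-1}$ or $C_{p}$, the paper shows inductively that $\Raa(\bar{i},\bar{t},n)$ is a union of rectangles each of which is \emph{crossed} by a curve of $\ustmani{X}$ connecting opposite sides; area contraction of $h$ then forces these rectangles to collapse onto $\cl\ustmani{X}$. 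Your proposal is missing exactly these two constructions: without an $\ustmani{X}$-curve crossing each surviving rectangle (or a disc with boundary in $\ustmani{X}\cup\stmani{X}$ absorbing it), zero transverse width only tells you the limit is a lamination, not that the lamination is $H_{0}\cap\cl\ustmani{X}$.
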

\begin{proof}
	Note that to prove the proposition it is enough to show $\Raa (\bar{i},\bar{t}):=\bigcap_{k\in\nat}h^{i_{k}}(C_{t_{k}})\subset \cl \ustmani{X}$, where $\bar{i}=\{i_{k}\}_{k\in\nat}$ is any increasing sequence with $i_{0}=0$ and $\bar{t}=\{t_{k}\}_{k\in\nat}$ satisfies $t_{k}\in\{1,..,p\}$. For later use, we define $\Raa (\bar{i},\bar{t},n):=\bigcap_{k\leq n}h^{i_{k}}(C_{t_{k}})$ for $n\in\nat$.
	
	Moreover, it is visible that if $h^{j}(C_{t'})\subset W$ for some disc $W$ with $\omega(W)\subset \cl\ustmani{X}$, $j\in\nat$ and $t'\in\{1,...p\}$, and if $t'=t_{k}$ for infinitely many $k\in\nat$, we must have $\Raa(\bar{i},\bar{t})\subset \cl\ustmani{X}$. At last, note that any disc $W$ with $\fr W\subset \ustmani{X}\cup\stmani{X}$, must also have $\omega(W)\subset\cl\ustmani{X}$.
	
	Note that by Lemma \ref{lem:prelim-tangency-param} one can find a curve $\gamma\subset C_{p-1}\cap \ustmani{X}$ which connects $\fr C^{u,l}_{p-1}$ and $\fr C^{u,r}_{p-1}$. Thus, $h(\gamma)$ connects $h(\fr C^{u,l}_{p-1})$ and $h(\fr C^{u,r}_{p-1})$, and by \ref{renorm:prop6} $h(\gamma)$ is to the right of every $U_{j}$, for $j=2,...,p-1$. Hence, $h(R)\subset W$, where $W$ is bounded by $\seg{ED}\subset\stmani{X}$ and $h(\gamma)\subset \ustmani{X}$, and $R=H_{0}\setminus (C_{p-1}\cup C_{p})$. It proves $\Raa(\bar{i},\bar{t})\subset\cl\ustmani{X}$ in the case $t_{k}<p-1$ for $k\in\nat$. 
	
	We split our discussion into two cases, according to conditions of Lemma \ref{lem:behaviour-hp}. Assume first, that $U_{p}\subset C_{2}$. Then, by our previous discussion, since $C_{2}\subset R$, there must be $\Raa(\bar{i},\bar{t})\subset \cl\ustmani{X}$ if $t_{k}=p$ for infinitely many $k\in\nat$. Consider now sets $\Raa(\bar{i},\bar{t})$ with $t_{k}= p-1$ for infinitely many $k\in\nat$. It is enough to treat sets with $t_{k}= p-1$ for every $k\in\nat$. We will show that any sequence $\bar{i}$ defines $\Raa(\bar{i},\bar{t},n)$, $n\geq 0$, which is a disjoint union of rectangles $T$ and every $T$ has its opposite sides connected by $\ustmani{X}$. As $h$ is area contracting, it shows $\Raa(\bar{i},\bar{t})\subset\cl\ustmani{X}$. Using the last claim of Lemma \ref{lem:prelim-tangency-param} we see that $\Raa(\bar{i},\bar{t},1)$ is a disjoint union of rectangles $T$ with mentioned form. Note that the image $h(T)$ of a rectangle $T$ is a union of two rectangles bordering on $T_{p-1}$, both connecting $h(\fr C_{p-1}^{s,l})$ with $h(\fr C_{p-1}^{s,r})$. Moreover, $h(T)\cap  T_{p-1}\subset C_{p}$. Consequently, $h(T)\cap C_{p-1}$ is a union of two rectangles with prescribed properties. Proceeding by induction, we see that one can apply the above reasoning for any $n\in\nat$, proving our claim.
	
	We turn to the case, when properties \ref{Cp->Up(1)} to \ref{Cp->Up(4)} hold. Using Lemma \ref{lem:prelim-tangency-param} and property \ref{Cp->Up(4)} one can find $\tau\subset C_{p}\cap\ustmani{X}$ with $h(\tau)$ connecting $\seg{ED}$, $T_{p}$ and $\fr C_{2}^{s,r}$. Let $g\in h(\tau)\cap \fr C_{p-1}^{s,r}$. Let $S_{g}\subset \fr C^{s,r}_{p-1}$ be the segment connecting $g$ and $\fr C^{u,+}_{p-1}$. Consider the region $W'$ (see Fig. \ref{fig:before-tangency2}) bounded by $\seg{ED}$, $h(\tau)$, $S_{g}$ and $\bigcup_{j=2}^{p-1}\fr C^{u,+}_{j}$. Note that $\omega(\fr W')\subset \cl\ustmani{X}$. It follows from \ref{Cp->Up(2)} and \ref{renorm:prop9} that $h(C_{j})\subset W'$ for every $j=2,...,p-1$. Hence, $\Raa(\bar{i},\bar{t})\subset\cl\ustmani{X}$ if only $t_{k}<p$ for infinitely many $k\in\nat$. Therefore, it remains to consider the case of $t_{k}=p$ for every $k\in\nat$. As before, it is not hard to prove, using induction, Lemma \ref{lem:behaviour-hp} and \ref{lem:prelim-tangency-param} that, for every $n\in\nat$, $\Raa(\bar{i},\bar{t}, n)$ is a union of rectangles $T$ which have opposite sides connected by a curve in $\ustmani{X}$. Consequently, as $h$ contracts the area, $\Raa(\bar{i},\bar{t})\subset\cl\ustmani{X}$, proving the proposition.
\end{proof}

We have an immediate
\begin{cor}\label{trapping-region}
	For parameters in $\Uee_{\text{tan}}$ we have $\bigcap_{n=0}^{\infty}f^{n}(H)=\cl\ustmani{X}$.
\end{cor}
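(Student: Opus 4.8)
The plan is to derive the identity from the previous Proposition by a suspension argument that translates the dynamics of the first return map $h$ back to that of $f$. Set $A_f:=\bigcap_{n\ge0}f^{n}(H)$. Since $H=\bigcup_{0\le j\le p}f^{j}(H_{0})$ is a finite union of compacta it is compact, and $f(H)\subseteq H$ (as $H$ is the union of all forward iterates of $H_{0}$), so the $f^{n}(H)$ are nested compact sets with intersection $A_f$; moreover $f(A_f)=\bigcap_{n\ge1}f^{n}(H)=A_f$, so $A_f$ is fully invariant. The inclusion $\cl\ustmani{X}\subseteq A_f$ is immediate: $\ustmani{X}$ is $f$-invariant and $f$ is a homeomorphism, hence $f^{n}(\cl\ustmani{X})=\cl\ustmani{X}$ for all $n$, while $\cl\ustmani{X}\subseteq H_{X}\subseteq H$; combining these gives $\cl\ustmani{X}=f^{n}(\cl\ustmani{X})\subseteq f^{n}(H)$ for every $n$, so $\cl\ustmani{X}\subseteq A_f$.

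For the reverse inclusion $A_f\subseteq\cl\ustmani{X}$ I would first reduce to points of $H_{0}$. Given $z\in A_f$, write $z=f^{j}(w)$ with $w\in H_{0}$ and $0\le j\le p$; since $z\in f^{m+j}(H)$ for all $m$, we get $w=f^{-j}(z)\in f^{m}(H)$ for all $m\ge0$, so $w\in H_{0}\cap A_f$, and because $f^{j}(\cl\ustmani{X})=\cl\ustmani{X}$ it is enough to prove $w\in\cl\ustmani{X}$ (then $z=f^{j}(w)\in\cl\ustmani{X}$). As $A_f$ is fully invariant, the entire backward orbit of $w$ stays in $H$; writing $f^{-m}(w)=f^{j_{m}}(u_{m})$ with $u_{m}\in H_{0}$ and $0\le j_{m}\le p$ shows the backward orbit meets $H_{0}$ at the times $m+j_{m}$, which are unbounded, so $S:=\{m\ge0:f^{-m}(w)\in H_{0}\}$ is infinite. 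Enumerate $S=\{0=m_{0}<m_{1}<\cdots\}$ and set $w_{k}:=f^{-m_{k}}(w)\in H_{0}$. Because no intermediate time lies in $S$, the forward orbit of $w_{k}$ first returns to $H_{0}$ exactly at step $m_{k}-m_{k-1}$, where it equals $w_{k-1}$; hence $h(w_{k})=w_{k-1}$ and $w=h^{k}(w_{k})\in h^{k}(H_{0})$ for every $k$. Thus $w\in\bigcap_{k\in\nat}h^{k}(H_{0})=H_{0}\cap\cl\ustmani{X}$ by the previous Proposition, so $w\in\cl\ustmani{X}$, as required.

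The two inclusions are otherwise routine; the one point that requires care is establishing the dictionary between $f$ and $h$, namely that the backward orbit of a point trapped in $H$ returns to the cross-section $H_{0}$ infinitely often and that consecutive returns realize the iterates of the first return map. This is exactly where the finite-union structure $H=\bigcup_{0\le j\le p}f^{j}(H_{0})$ and the well-definedness of $h\colon H_{0}\to H_{0}$ from Proposition \ref{prop:renormalization-main} are used; once this correspondence is in place the statement follows immediately from the preceding Proposition, which is why it is recorded as a corollary.
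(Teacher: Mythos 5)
Your proof is correct and takes exactly the route the paper intends: the paper offers no written argument (the corollary is declared an immediate consequence of the preceding Proposition), and your first-return bookkeeping --- full invariance of $\bigcap_{n}f^{n}(H)$, the backward orbit of a trapped point of $H_{0}$ meeting $H_{0}$ at an unbounded set of times, and consecutive returns realizing the iterates of $h$, so that the point lies in $\bigcap_{k}h^{k}(H_{0})=H_{0}\cap\cl\ustmani{X}$ --- is precisely the omitted derivation. Both inclusions and the dictionary between $f$ and $h$ are justified correctly, so there is nothing to fix.
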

As a consequence we obtain

\begin{thm}
	For parameters in $\Uee_{\text{tan}}$ the closure of the unstable manifold $\Aaa=\cl \ustmani{X}$ of $X$ is a topological attractor.
\end{thm}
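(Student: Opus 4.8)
The plan is to read off both defining conditions of an attractor in the sense of Definition \ref{def:attractor} directly from Corollary \ref{trapping-region}. That corollary identifies $\Aaa=\cl\ustmani{X}$ with the maximal invariant set $\bigcap_{n\ge 0}f^{n}(H)$ of the open set $H=\bigcup_{n\ge 0}\inter f^{n}(H_{0})$, and since $f$ is a homeomorphism it carries interiors to interiors, so $f(H)=\bigcup_{j\ge 1}\inter f^{j}(H_{0})\subset H$; thus $H$ is a nonempty forward-invariant open set whose maximal invariant subset is exactly $\Aaa$. All the genuine work sits in that corollary, so what remains is a soft dynamical argument verifying the positive-measure condition (A1) and the minimality condition (A2).

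For (A1) I would show $H\subset\rho(\Aaa)$. Fix $x\in H$; forward invariance gives $f^{n}(x)\in H$ for all $n\in\nat$, and for each $k$ the tail $\{f^{n}(x):n\ge k\}$ lies in $f^{k}(H)$ because $f^{n}(x)=f^{k}(f^{n-k}(x))$ with $f^{n-k}(x)\in H$. Passing to limit points gives $\omega(x)\subset\cl{f^{k}(H)}\subset f^{k}(\cl H)$, the latter being closed since $\cl H$ is compact and $f^{k}$ continuous; intersecting over $k$ yields that $\omega(x)$ is a compact invariant subset of $\bigcap_{k\ge 0}f^{k}(\cl H)$. Here the only care needed is replacing $\cl H$ by $H$: the difference $\cl H\setminus H\subset\fr H$ consists of finitely many arcs of $\stmani{X}$ and $\ustmani{Y}$, which are swept into $\Aaa$ under iteration, so the corollary's renormalization argument applies verbatim to $\cl H$ and $\bigcap_{k}f^{k}(\cl H)=\bigcap_{k}f^{k}(H)=\Aaa$. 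Hence $\omega(x)\subset\Aaa$ for every $x\in H$, i.e. $H\subset\rho(\Aaa)$; as $H\supset\inter H_{0}\neq\emptyset$ is open, $\lebmes{\rho(\Aaa)}>0$, which is (A1).

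For (A2) let $A'\subset\Aaa$ be closed with $\rho(A')=\rho(\Aaa)$. Since $\Aaa$ is invariant and $\Aaa\subset H\subset\rho(\Aaa)=\rho(A')$, every $x\in\Aaa$ satisfies $\omega(x)\subset A'$. Now I would invoke the topological mixing, hence transitivity, of $f|_{\cl\ustmani{X}}$, which is the other principal conclusion for these parameters: there is a point $z\in\Aaa$ whose forward orbit is dense in $\Aaa$, and because $\Aaa=\cl\ustmani{X}$ is a nondegenerate continuum it has no isolated points, so $\omega(z)=\Aaa$. If $A'\subsetneq\Aaa$ then $\omega(z)=\Aaa\not\subset A'$ gives $z\notin\rho(A')$, contradicting $z\in\Aaa\subset\rho(A')$; therefore $A'=\Aaa$. (Equivalently, density of recurrent points — e.g. periodic points — suffices: any recurrent $w\in\Aaa\setminus A'$ would satisfy $w\in\omega(w)\subset A'$, a contradiction, forcing $\cl{A'}\supset\Aaa$.) This establishes (A2), and with (A1) it completes the proof.

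The substantive obstacle is not internal to this theorem but upstream: everything rests on Corollary \ref{trapping-region} and on the mixing of $f|_{\cl\ustmani{X}}$, both of which are the hard results already in hand. Within the present argument the only delicate points are the boundary bookkeeping in (A1), namely justifying $\bigcap_{k}f^{k}(\cl H)=\Aaa$ rather than merely $\bigcap_{k}f^{k}(H)=\Aaa$, and the reliance on transitivity in (A2); once those are granted the verification is entirely routine.
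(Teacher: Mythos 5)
Your proposal proves a different property than the one the paper's proof establishes, and the difference matters. What you verify is Milnor's Definition \ref{def:attractor}: positive-measure realm of attraction (A1) plus minimality (A2). But the ``topological attractor'' of this theorem --- as the paper's own proof makes explicit, and as the downstream Theorem \ref{thm:mainB} (strange attractor in the sense of Definition \ref{def:strange-chaotic-att}) requires --- is the existence of a \emph{trapping region}: an open set $V\supset\Aaa$ with $f(\cl V)\subset V$ and $V\subset H$, in which $\Aaa$ is then maximal by Corollary \ref{trapping-region}. This is exactly the content of the paper's argument: from $\Aaa\subset\inter H$ it gets $f^{p+1}(H)\subset\inter H$, then inductively builds open sets $V_{n}$ with $\cl V_{n}\subset f^{n}(\inter H)$ and $f(\cl V_{n-1})\subset V_{n}$, and takes $V=\bigcup_{n=0}^{p}V_{n}$. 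Your proof never produces such a set. The forward invariance you do establish, $f(H)\subset H$, is strictly weaker than the trapping condition $f(\cl H)\subset H$: the boundary of $H$ consists of arcs of $\stmani{X}$ and $\ustmani{Y}$, and nothing guarantees a priori that these are carried into the open set $H$ rather than onto $\fr H$ again --- which is precisely why the paper needs the inductive fattening construction. So the key step is missing, and the claim that ``all the genuine work sits in the corollary'' is not accurate for the statement as the paper intends it.

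Two further remarks on the part you did prove. First, your minimality argument (A2) invokes topological mixing of $f|_{\Aaa}$, which the paper establishes only \emph{after} this theorem; since that proof (via Lemma \ref{4}) does not rely on the present result there is no circularity, but your route makes the attractor property depend on a substantially harder theorem, whereas the paper's construction is soft and self-contained given Corollary \ref{trapping-region}. Second, in (A1) the passage from $\bigcap_{k}f^{k}(H)=\Aaa$ to $\bigcap_{k}f^{k}(\cl H)=\Aaa$ is asserted (``swept into $\Aaa$'') rather than proved; it can be repaired by observing that the renormalization Proposition is stated for the closed triangle $H_{0}$, so the corollary really concerns the compact set $\bigcup_{0\leq j\leq p}f^{j}(H_{0})$, but as written this step is hand-waved. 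In short: as a proof that $\Aaa$ is a Milnor attractor your argument is essentially sound (modulo these repairs), but as a proof of the theorem the paper states and later uses, it has a genuine gap --- the trapping region is never constructed.
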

\begin{proof}
	We will find an open set $V\supset \Aaa$ with $f(\cl V)\subset V$ and $V\subset H$. It will conclude our proof, since by Corollary \ref{trapping-region} $\bigcap_{n=0}^{\infty}f^{n}(H)=\Aaa$.
	
	As $\Aaa\subset \inter H$, we have $f^{p+1}(H)\subset \inter H$. Hence we can find an open set $V_{0}$ with $\cl V_{0}\subset \inter H$ and $f^{p+1}(H)\subset V_{0}$. We inductively define, for $0<n\leq p$, open sets $V_{n}$ with $\cl V_{n}\subset f^{n}(\inter H)$ and $f(\cl V_{n-1})\subset V_{n}$. Assume we have constructed $V_{n-1}$. Note that $f(\cl V_{n-1})\subset f^{n}(\inter H)$. Therefore, it is enough to take sufficiently small neighbourhood $V_{n}\subset f^{n}(\inter H)$ of $f(\cl V_{n-1})$ so that $\cl V_{n}\subset f^{n}(\inter H)$. Let $V=\bigcup_{n=0}^{p}V_{n}$. We compute \[
	f(\cl V)\subset \bigcup_{n=0}^{p}f(\cl V_{n})\subset \bigcup_{n=1}^{p}V_{n}\cup f^{p+1}(H)\subset V.
	\] Notice that $V$ may not be an open disc. Therefore to construct attracting neighbourhood of $\Aaa$ fitting definition from Section \ref{sec:attractors} we have to add to $V$ all bounded components of its complement.
\end{proof}
\begin{lem}\label{4}
	Let $I\subset G$ be a segment contained in some unstable manifold. Then there exist $n\geq 0$ and a segment $I_{1}\subset f^n(I)$ such that $I_{1}$ intersects both coordinate axis.
\end{lem}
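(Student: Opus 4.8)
The plan is to combine the uniform expansion along unstable directions with the elementary identity $f(0,y)=(1+y,0)$, which says that $f$ maps the vertical axis $\{x=0\}$ onto the horizontal axis $\{y=0\}$. Throughout I use three facts established earlier: $G$ is forward invariant (Lemma \ref{DeltaYAZisINVARIANT}), so each $f^{n}(I)$ lies in the bounded set $G$; the map $f$ is affine on each of the closed half-planes $\{x\ge0\}$ and $\{x\le0\}$, breaking a straight segment only where it crosses $\{x=0\}$; and the tangent direction of any straight piece of an unstable manifold lies in the unstable cone $C^{u}$, on which $Df$ expands lengths by a factor at least $\beta>1$ (Lemma \ref{inv-cones}, together with $E^{u}\subset C^{u}$, which is immediate from the construction in Theorem \ref{hyp}). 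After passing to a subsegment I may assume $I$ is a single affine segment; if it already meets $\{x=0\}$ we are part-way to the goal, so I assume it lies in one closed half-plane.

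First I would show that some forward image crosses the vertical axis. As long as a straight unstable segment $S\subset G$ lies in one closed half-plane, $f|_{S}$ is affine, so $f(S)$ is again a straight segment with $\length{f(S)}\ge\beta\length{S}$ and direction again in $C^{u}$. If $f^{n}(I)$ stayed in one half-plane for every $n$, its length would grow like $\beta^{n}\to\infty$, contradicting $f^{n}(I)\subset G$ and $\diam G<\infty$. Hence there is a least $n_{1}\ge0$ with $J:=f^{n_{1}}(I)$ a straight segment meeting $\{x=0\}$, say at $q=(0,q_{y})$.

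Next I would turn this vertical-axis crossing into a contact with the horizontal axis. Since $f(q)=(1+q_{y},0)\in\{y=0\}$, the image $f(J)$ has a corner $c_{1}=f(q)$ on the horizontal axis, and the two straight pieces $f(J^{+})$ and $f(J^{-})$ adjacent to $c_{1}$ (the images of $J^{\pm}=J\cap\{x\gtrless0\}$) emanate from it. Computing $Df_{z}$ on each half-plane shows these two pieces leave $c_{1}$ with opposite horizontal directions, so exactly one of them heads back toward $\{x=0\}$, namely the one on the side of $c_{1}$ facing the vertical axis. If that piece already crosses $\{x=0\}$, then it is a straight segment with an endpoint $c_{1}\in\{y=0\}$ that also meets $\{x=0\}$; taking $I_{1}$ to be this piece finishes the proof.

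The remaining, and genuinely delicate, case is when that adjacent piece does not yet reach the vertical axis. The fold at $\{x=0\}$ both creates the desired horizontal-axis contact and breaks the segment, and one more application of $f$ moves the fresh corner $c_{1}$ off the horizontal axis; thus one cannot simply grow a single axis-anchored piece. Instead the vertical-axis crossings recur: each sub-piece, iterated as in the second step, must re-cross $\{x=0\}$ after finitely many steps, and each such crossing produces a new horizontal-axis corner one step later. The crux is to show that at some recurrence the straight piece adjacent to the new corner is long enough, relative to the distance $\length{1+q_{y}}$ of that corner from the vertical axis, to span across $\{x=0\}$. I would extract this from the boundedness of $G$ (which keeps the corners at bounded distance from the axes) together with the geometric growth of lengths by $\beta$ at every axis-free step; this quantitative balance between the fixed scale of $G$ and the expanding pieces is, I expect, the main obstacle in the argument.
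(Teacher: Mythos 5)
Your first two steps are sound: expansion in the unstable cone plus forward invariance of $G$ forces some iterate of $I$ to cross the vertical axis, and the image of a crossing segment has a corner $c_{1}=f(q)$ on the horizontal axis. But the proof has a genuine gap exactly where you say it does: the case in which the piece adjacent to $c_{1}$ fails to reach the vertical axis is described, not resolved. The difficulty is real, and the remedy you sketch does not close it: when a surviving piece later re-crosses $\{x=0\}$, it is cut again, and the sub-piece adjacent to the newly created corner may carry an arbitrarily small fraction of the accumulated length. So the geometric growth by $\beta$ of a whole segment gives no lower bound on the length of the particular piece you need to span from the corner back to the vertical axis; no greedy tracking of a single piece overcomes this. (There is also a computational slip: on unstable-cone vectors both $Df_{+}$ and $Df_{-}$ have negative first coordinate, so both pieces of $f(J)$ leave $c_{1}$ heading toward the vertical axis --- one into the lower half-plane, one into the upper --- not in ``opposite horizontal directions''; this is harmless for your case analysis, but it shows the derivative computation was not carried through.)

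The missing idea --- and the way the paper, following Misiurewicz, actually argues --- is to make the contradiction hypothesis do global work: assume no straight piece of any iterate meets both axes, and count pieces instead of chasing one. If a straight piece $J$ crosses the vertical axis, then $f(J)$ is at most two pieces, each touching the horizontal axis at the common corner; by the contradiction hypothesis neither of these can also reach the vertical axis, hence neither breaks under the next application of $f$, so $f^{2}(J)$ still consists of at most two pieces. If $J$ does not cross the vertical axis, $f(J)$ is one piece and $f^{2}(J)$ at most two. Thus $f^{2k}(I)$ consists of at most $2^{k}$ straight pieces, each of length at most $\diam G$, so its total length is at most $2^{k}\diam G$; on the other hand expansion gives length at least $\beta^{2k}\length{I}$, and $\beta^{2}>2$ under \ref{c5}, a contradiction for $k$ large. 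This replaces your unresolved ``quantitative balance'' by a one-line comparison of $2^{k}$ against $\beta^{2k}$, and it is precisely the step your proposal lacks.
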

\begin{proof}
	Misiurewicz's proof from \cite{strange-attractor-mis} follows almost without any changes. In the case of $b<0$ we have different constant for expanding vectors in the unstable manifold, hence in the place of assumption $b/c>\sqrt2$ present in Misiurewicz work, we have to put $\beta>\sqrt2$, that is \ref{c5}. Condition \ref{c5} together with \ref{c2} and \ref{c3} are equivalent to \ref{c6}.
		
	Suppose that such $n$ and $I_{1}$ do not exist. The set $f^k(I)$ is a broken line. The mapping $ f $ is linear in the left and right half-planes. If a segment $J\subset f^k(I)$ intersects the vertical axis, then $ f(J)$ is a union of at most two segments, each of them intersecting the horizontal axis. Then $f(J)$ does not intersect the vertical axis. Consequently, $f^2(J)$ consists of, at most, two segments. If $J$ does not intersect the vertical axis, then $f(J)$ is a segment and $ f^2(J) $ consists of, at most, two segments. Hence, in both cases, $ f^2(J) $ consists of, at most, two segments. Thus, there are no more than $2^k$ segments in $ f^{2k}(J) $ for $k=0,1,2,....$ Since $I\subset G$, $f^{2k}(I)\subset G$ as well. Therefore, the length of $f^{2k}(I)$ is at most $2^k \diam (G)$. On the other hand, $I$ is a segment of an unstable manifold, thus, the length of $f^{2k}(I)$ is at least $\beta^{2k}\diam I$. If $k$ is large enough, we get $\beta^2\leq 2$, a contradiction with \ref{c5}.
\end{proof}

\begin{thm}
	The Lozi map is topologically mixing on the unstable manifold of $X$.
\end{thm}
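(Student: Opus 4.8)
The plan is to prove topological mixing on $\ustmani{X}$ by reducing the mixing condition to a statement about unstable manifold segments, and then applying Lemma \ref{4} together with the expansion and density structure established earlier. Recall that to show mixing on $\Aaa=\cl\ustmani{X}$ we must verify that for any two nonempty open sets $W,V$ (relatively open in $\Aaa$) we have $f^{i}(W)\cap V\neq\emptyset$ for \emph{almost every} $i\in\nat$, which is stronger than transitivity. The natural strategy, following Misiurewicz, is to exploit the fact that every relatively open subset of $\Aaa$ contains a piece of an unstable manifold, since $\ustmani{X}$ is dense in $\Aaa$ by construction.

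First I would establish the key intermediate claim: if $I\subset G$ is a segment of an unstable manifold, then there exists $N$ such that for every $n\geq N$, the set $f^{n}(I)$ contains a segment crossing both coordinate axes, and in fact eventually $f^{n}(I)$ stretches across $\ustmani{X}$ in a way that forces it to meet any fixed target. Lemma \ref{4} already provides some $n_{0}$ and a subsegment $I_{1}\subset f^{n_{0}}(I)$ meeting both axes. The point is to upgrade this from a single time $n_{0}$ to all sufficiently large times: once a segment crosses both axes, its forward images keep growing in length (by the factor $\beta>\sqrt2$ from \ref{c5}) and keep crossing axes, so by iterating Lemma \ref{4} and controlling how the broken line wraps around $\ustmani{X}$, the images $f^{n}(I)$ become longer and longer curves inside $\ustmani{X}\cap G$. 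Because $\ustmani{X}$ is connected and dense in the compact set $\Aaa$, a sufficiently long connected piece of $\ustmani{X}$ must intersect any prescribed relatively open $V\subset\Aaa$.

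The second step is to combine this with the density of $\ustmani{X}$ in $\Aaa$. Given relatively open $W\subset\Aaa$, it contains a point of $\ustmani{X}$ and hence a genuine unstable segment $I\subset W\cap\ustmani{X}$. Applying the upgraded version of Lemma \ref{4}, the forward images $f^{n}(I)$ eventually become long unstable arcs that fill out $\Aaa$ densely, so for all $n$ past some threshold $f^{n}(I)$ meets $V$. Since $f^{n}(I)\subset f^{n}(W)$, this gives $f^{n}(W)\cap V\neq\emptyset$ for all large $n$, which is exactly the mixing condition (the defect set of ``bad'' indices is finite, hence of density zero). The invariance $f(\Aaa)=\Aaa$ together with $\Aaa=\cl\ustmani{X}$ guarantees that iterates of unstable segments remain inside $\Aaa$, so the argument stays within the attractor.

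The main obstacle, and the part requiring genuine care, is the upgrade from ``some $n$'' in Lemma \ref{4} to ``all large $n$'' with a uniform guarantee of hitting \emph{any} target open set. The length bound from the expansion factor $\beta$ shows images grow without limit, but I must argue that a long connected unstable arc inside the bounded triangle $G$ cannot avoid a fixed open set: this needs the density of $\ustmani{X}$ in $\Aaa$ and a compactness/uniform-continuity argument to rule out the arc concentrating near a measure-zero portion of $\Aaa$. Concretely, one shows that once $f^{n}(I)$ contains a segment crossing both axes, its subsequent images contain subsegments that themselves cross both axes, producing a self-reproducing supply of crossing segments; a homoclinic/heteroclinic intersection pattern for $X$ (analogous to the homoclinic orbit used in the orientation-reversing case, here replaced by the heteroclinic structure via $\ustmani{Y}$ and $\stmani{X}$ described in Section \ref{sec:properties-of-lozi-family}) then forces the images to spread across all of $\Aaa$ after uniformly bounded additional time, yielding the ``almost every $i$'' conclusion rather than merely ``some $i$''.
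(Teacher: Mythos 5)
Your reduction (every open set meeting $\ustmani{X}$ contains an unstable segment $I$; apply Lemma \ref{4}) matches the paper's setup, but the heart of your argument has a genuine gap. The claim that ``a sufficiently long connected piece of $\ustmani{X}$ must intersect any prescribed relatively open $V\subset\Aaa$'', so that $f^{n}(I)$ hits $V$ for all large $n$, is never proved, and it does not follow from the expansion factor $\beta>\sqrt{2}$ of \ref{c5} or from the density of $\ustmani{X}$ in $\Aaa$. Length growth says nothing about where the arc goes: a long unstable arc confined to the bounded region $G$ could a priori fold back on itself and accumulate only on a proper closed invariant subset of $\Aaa$, missing $V$ for all time; density of the \emph{whole} manifold does not pass to individual long arcs. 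The ``compactness/uniform-continuity argument'' and the ``homoclinic/heteroclinic intersection pattern \dots after uniformly bounded additional time'' that you invoke to close this hole are exactly the missing content --- you name the obstacle but do not overcome it. As stated, your intermediate claim is essentially as strong as mixing itself, so the proposal is circular at its key step.

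The paper's mechanism is different, and it is what makes the proof work: it never upgrades Lemma \ref{4} to ``all large $n$'' and never shows forward images of $I$ become dense. Instead it iterates $V$ \emph{backwards}. Since $V$ meets $\ustmani{X}$, backward iterates of a point of $V\cap\ustmani{X}$ converge to $X$, and the backward dynamics near $X$ stretch small neighbourhoods along the stable direction; hence there is $k_{1}\in\nat$ such that for every $k\geq k_{1}$ the set $f^{-k}(V)$ contains an arc lying in a small neighbourhood of $\seg{ED}\subset\stmani{X}$ and joining a neighbourhood of $E$ to a neighbourhood of $D$. A separate geometric observation (using $f(M)\in H_{0}$) shows that every segment in $f(G)$ crossing both coordinate axes has a subsegment, contained in one closed half-plane, which must cross $\seg{ED}$ and also any such nearby arc. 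Consequently, the single crossing segment provided by Lemma \ref{4} at one time $n_{1}$ meets $f^{-k}(V)$ for \emph{every} $k\geq k_{1}$, and applying $f^{n_{1}+k}$ yields $f^{n}(U)\cap V\cap\ustmani{X}\neq\emptyset$ for all $n\geq n_{1}+k_{1}$. This pull-back of $V$ onto the stable segment $\seg{ED}$ is the step your proposal lacks; with it, no claim about long unstable arcs filling out $\Aaa$ is needed.
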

\begin{proof}
		Let $U$, $V$ be open sets in $G$, intersecting $\ustmani{X}$ non empty.
	\begin{claim*}
		There exists $k_{1}\in \nat$ such that $f^{-k}(V)$, for $k>k_{1}$, intersects $f(G)$ in such a way that every segment in $f(G)$ intersecting both coordinate axes also intersects $f^{-k}(V)$.
	\end{claim*}
	\begin{proof}
		Note that every segment in $f(G)$ intersecting both coordinate axes, must have some points $S^{x}\in\{y=0\}$ and $S^{y}\in\{x=0\}$, for which its subsegment $\seg{S^{x}S^{y}}$ is entirely contained either in $\{y\geq 0\}\cap f(G)$ or in $\{y\leq 0\}\cap f(G)$. Moreover, since $f(M)\in H_{0}$, there is also $S^{x}\in H_{0}$. Hence $\seg{S^{x}S^{y}}$ must intersect $\seg{ED}\subset \stmani{X}$. Similarly, $\seg{S^{x}S^{y}}$ will intersect any arc contained in some small neighbourhood of $\seg{ED}$, that connects neighbourhood of $E$ with that of $D$. Thus, it is enough to prove that $f^{-k}(V)$ contains such an arc for $k>k_{1}$ and $k_{1}\in \nat$ big enough.
		
		Note that any small rectangle $R$ adjacent to $X$, which boundary comprises of segments of $\ustmani{X}$ and $\stmani{X}$, under iterations will be stretched along $\seg{ED}$. As $\ustmani{X}\cap V\neq\emptyset$, we must also have $R\cap f^{-k}(V)\neq\emptyset$ for $k>k_{1}$ and $k_{1}\in\nat$ big enough. Consequently, $f^{-k}(V)$ will accumulate on $\seg{ED}$, proving our claim.
	\end{proof}
	Since $U\cap W^{u}_{X}$ is nonempty, there exists a segment $I\subset U\cap W^{u}_{X}$. By Lemma \ref{4}, there exists $n_{1}\geq 0$ such that some subsegment of $f^{n_{1}}(I)$ intersects both coordinate axes. Therefore, it also intersects $f^{-k}(V)$ for all $k\geq k_{1}$. Hence, $f^n(U)\cap V \cap W_{X}^{u}\supset f^{n-n_{1}}(f^{n_{1}}(I)\cap f^{n_{1}-n}(V))\neq\emptyset$ for all $n\geq n_{1}+k_{1}$.
\end{proof}
The following theorem for the family of 2-dimensional border-collision normal forms was obtained by Ghosh and Simpson \cite{simpson:devaney-chaos}.
\begin{thm}\label{WsX-dense-in-G}
	The stable manifold $W^{s}_{X}$ is dense in $G$.
\end{thm}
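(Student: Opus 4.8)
The plan is to combine Lemma \ref{4} with the geometric fact, established inside the proof of the mixing theorem, that every segment lying in $f(G)$ and meeting both coordinate axes must cross the stable segment $\seg{ED}\subset\stmani{X}$. First I would record two elementary facts about $\stmani{X}$. Since $E$ lies on the local stable manifold of $X$, $D=f^{-1}(E)$, and $X\in\seg{ED}$ with $\seg{ED}$ contained in the right half plane (recall $D_{x},E_{x}>0$, so the whole segment has positive $x$-coordinate), the segment $\seg{ED}$ lies on the stable eigendirection through $X$; as $f$ is affine there and fixes $X$, every point of $\seg{ED}$ is attracted to $X$, whence $\seg{ED}\subset\stmani{X}$. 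Moreover $\stmani{X}=\{p:\ f^{n}(p)\to X\}$ is invariant under both $f$ and $f^{-1}$; in particular $f^{-m}(\stmani{X})=\stmani{X}$ for every $m\geq 0$.

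Now fix an arbitrary nonempty open set $V\subset G$; I must produce a point of $V\cap\stmani{X}$. By Theorem \ref{hyperbolicity} almost every point of $G$ carries a linear local unstable manifold, so this full-measure set meets the open (hence positive-measure) set $V$; choosing such a point and shrinking, I obtain a nondegenerate segment $I\subset V$ contained in an unstable manifold. Applying Lemma \ref{4} yields an index $n_{1}$ and a subsegment $I_{1}\subset f^{n_{1}}(I)$ that meets both coordinate axes. To guarantee that $I_{1}$ actually lands in $f(G)$, I would apply the lemma to $f(I)$ rather than to $I$ (restricting to one of its at most two affine pieces should $I$ cross the vertical axis), so that $n_{1}\geq 1$; since $G$ is forward invariant by Lemma \ref{DeltaYAZisINVARIANT}, one has $f^{n_{1}}(I)\subset f^{n_{1}}(G)\subset f(G)$, and thus $I_{1}\subset f(G)$.

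By the geometric fact quoted above, $I_{1}\cap\seg{ED}\neq\emptyset$; pick $w\in I_{1}\cap\seg{ED}$. Then $w\in\stmani{X}$, and by $f^{-1}$-invariance of $\stmani{X}$ we get $f^{-n_{1}}(w)\in\stmani{X}$. On the other hand $w\in I_{1}\subset f^{n_{1}}(I)$ gives $f^{-n_{1}}(w)\in I\subset V$. Hence $f^{-n_{1}}(w)\in V\cap\stmani{X}$, and since $V$ was arbitrary, $\stmani{X}$ is dense in $G$.

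I expect the only delicate point to be the bookkeeping that places $I_{1}$ inside $f(G)$: one must rule out the degenerate case $n_{1}=0$ (handled by iterating once more and using forward invariance of $G$) and account for the possible breaking of $f(I)$ across the vertical axis (handled by passing to one affine piece). The substantive geometric input—that a both-axes-crossing segment in $f(G)$ is forced across $\seg{ED}$—is already available from the proof of the mixing theorem, which is exactly what makes this density argument short.
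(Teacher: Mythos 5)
Your proposal is correct and takes essentially the same route as the paper: the paper's own (very terse) proof consists precisely of invoking the property that every segment in $f(G)$ meeting both coordinate axes must cross $\seg{ED}\subset\stmani{X}$, together with Lemma \ref{4} and Theorem \ref{hyperbolicity}. Your write-up simply supplies the details the paper leaves implicit — that $\seg{ED}\subset\stmani{X}$, the pull-back of the intersection point via $f^{-1}$-invariance of $\stmani{X}$, and the bookkeeping (iterating once more and passing to an affine piece) that places the axis-crossing segment inside $f(G)$.
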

\begin{proof}
	Note that segment $\overline{ED}$ has property that every segment in $f(G)$ intersecting both coordinate axis intersects $\overline{ED}$. Now, to prove that $W^{s}_{X}\cap G$ is dense in $G$ it is enough to use the aforementioned property, Lemma \ref{4} and Theorem \ref{hyperbolicity}.
\end{proof}

\section{Continuity of Lozi attractors in the Hausdorff metric}\label{sec:continuity}
Recall that the Hausdorff distance of $H,L\subset \mathbb R^2$ is defined by
\[ \dH{H}{L}=\max\{d_{a}(H,L),d_{a}(L,H)\}, \]where $d_{a}(H,L)=\sup_{h\in H}\inf_{l\in L}d(h,l)$.

Let $f_{\mu}$ be a family of maps on $\mathbb R^2$ that vary continuously with respect to the parameter $\mu\in M\subset \mathbb{R}^2$, where $M$ is compact with nonempty interior. We assume that for every $\mu\in M$ there is an attractor $\Aaa_{\mu}\subset \mathbb R^2$ and the following hold.
\begin{enumerate}[label=(A\arabic*)]
	\item\label{A1} There exists a compact $\Omega\subset \mathbb R^2$ such that $\Aaa_{\mu}\subseteq \Omega$ for every $\mu\in M$.
	\item\label{A2} For every $\mu\in M$ there exists a compact $H_{\mu}\subseteq \Omega$, continuous in $\mu$ with respect to the Hausdorff metric, such that $f_{\mu}(H_{\mu})\subseteq H_{\mu}$ and $\Aaa_{\mu}=\bigcap_{n=0}^{\infty}f^{n}_{\mu}(H_{\mu})$
\end{enumerate}
Key lemma, that we are going to use, assures us that under conditions \ref{A1} and \ref{A2} attractors vary continuously, if we additionally have uniform convergence of iterations of attractors' neighbourhoods.

\begin{lem}[{\cite[Thm. 5.1]{glendinning2019robust}}]\label{glen-key}
	If the conditions \ref{A1} and \ref{A2} hold and $\dH{f^{n}_{\mu}(H_{\mu})}{\Aaa_{\mu}}\to 0$ as $n\to\infty$ uniformly in $M$, then $\Aaa_{\mu}$ is continuous in the Hausdorff metric in $M$.
\end{lem}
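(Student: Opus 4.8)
The plan is to prove the lemma by an $\epsilon/3$ argument that isolates a single finite iterate for which continuity in the parameter is transparent. Fix $\mu_{0}\in M$ and $\epsilon>0$; the goal is a neighbourhood of $\mu_{0}$ on which $\dH{\Aaa_{\mu}}{\Aaa_{\mu_{0}}}<\epsilon$. By the uniform convergence hypothesis there is an $N\in\nat$, independent of the parameter, with $\dH{f^{n}_{\mu}(H_{\mu})}{\Aaa_{\mu}}<\epsilon/3$ for all $n\geq N$ and all $\mu\in M$. This one $N$ serves simultaneously for $\mu$ and for $\mu_{0}$, converting the comparison of two attractors into the comparison of the two finite iterates $f^{N}_{\mu}(H_{\mu})$ and $f^{N}_{\mu_{0}}(H_{\mu_{0}})$.

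The heart of the argument is to show that the assignment $\mu\mapsto f^{N}_{\mu}(H_{\mu})$ is continuous from $M$ into the space of compact subsets of $\Omega$ equipped with the Hausdorff metric. I would build this from two inputs: by \ref{A2} the trapping sets $H_{\mu}$ vary continuously in the Hausdorff metric, and since $(\mu,x)\mapsto f_{\mu}(x)$ is jointly continuous and $\Omega$ is compact, the maps $f_{\mu}$ are uniformly continuous on $\Omega$ and equicontinuous as $\mu$ ranges over a small neighbourhood of $\mu_{0}$. The elementary but essential step is a composition lemma: if $\sup_{x\in\Omega}|g_{k}(x)-g(x)|\to 0$ and $K_{k}\to K$ in the Hausdorff metric, with all sets and images lying in $\Omega$, then $g_{k}(K_{k})\to g(K)$ in the Hausdorff metric. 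Applying this once gives continuity of $\mu\mapsto f_{\mu}(H_{\mu})$, and iterating it $N$ times---each intermediate set $f^{j}_{\mu}(H_{\mu})$ stays inside $\Omega$ by \ref{A1} together with the trapping property $f_{\mu}(H_{\mu})\subseteq H_{\mu}\subseteq\Omega$---yields continuity of $\mu\mapsto f^{N}_{\mu}(H_{\mu})$. Hence there is $\delta>0$ with $d(\mu,\mu_{0})<\delta$ forcing $\dH{f^{N}_{\mu}(H_{\mu})}{f^{N}_{\mu_{0}}(H_{\mu_{0}})}<\epsilon/3$.

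Assembling the pieces via the triangle inequality for the Hausdorff metric gives
\[
\dH{\Aaa_{\mu}}{\Aaa_{\mu_{0}}}\leq\dH{\Aaa_{\mu}}{f^{N}_{\mu}(H_{\mu})}+\dH{f^{N}_{\mu}(H_{\mu})}{f^{N}_{\mu_{0}}(H_{\mu_{0}})}+\dH{f^{N}_{\mu_{0}}(H_{\mu_{0}})}{\Aaa_{\mu_{0}}}<\epsilon
\]
for all $\mu$ with $d(\mu,\mu_{0})<\delta$, which establishes continuity of $\mu\mapsto\Aaa_{\mu}$ at $\mu_{0}$; since $\mu_{0}\in M$ was arbitrary, continuity holds throughout $M$.

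I expect the composition lemma of the second paragraph to be the only genuine obstacle: one must verify both inclusions defining the Hausdorff estimate, bounding $g_{k}(K_{k})$ against $g(K)$ by combining the uniform closeness of $g_{k}$ to $g$ on $\Omega$ with the uniform continuity of $g$ needed to absorb the Hausdorff displacement of $K_{k}$ from $K$. The uniform convergence hypothesis is precisely what prevents the cutoff $N$ from exploding as $\mu$ approaches the boundary of $M$, so the $\epsilon/3$ split loses no uniformity; everything else reduces to compactness and the triangle inequality.
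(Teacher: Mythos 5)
Your proof is correct. Note that the paper gives no proof of this lemma at all---it is imported verbatim as Theorem 5.1 of \cite{glendinning2019robust}---and your argument (a uniform cutoff $N$ from the hypothesis, Hausdorff-continuity of the finite iterate $\mu\mapsto f^{N}_{\mu}(H_{\mu})$ obtained by iterating a composition lemma that combines uniform closeness of $f_{\mu}$ to $f_{\mu_{0}}$ on $\Omega$ with uniform continuity of $f_{\mu_{0}}$, and then an $\epsilon/3$ triangle inequality) is essentially the same proof as the one given in that cited source.
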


Let us now confine our attention to the family of Lozi maps. From now on we define $f_{\mu}$ to be the Lozi map with parameter $\mu=(a,b)\in \Uee_{\text{tan}}$. Denote by $\Aaa_{\mu}$ the attractor for $f_{\mu}$. We have obvious candidates for attracting neighbourhoods from the condition \ref{A2}, that is polygons $H$, which we denote by $H_{\mu}$. It is clear that $H_{\mu}$ varies continuously in $\Uee_{\text{tan}}$ with respect to the Hausdorff metric.
\begin{lem}
	The attractor $\Aaa_{\mu}$ for the Lozi map is continuous in the Hausdorff metric.
\end{lem}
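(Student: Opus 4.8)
The plan is to verify the hypotheses of Lemma \ref{glen-key}, as the continuity of $\Aaa_{\mu}$ follows immediately once its conditions are met. The family $f_{\mu}$ is manifestly continuous in $\mu=(a,b)$, and the attractors $\Aaa_{\mu}=\cl\ustmani{X}$ have already been identified in the preceding sections. Condition \ref{A1} requires a single compact $\Omega$ containing every $\Aaa_{\mu}$ for $\mu\in\Uee_{\text{tan}}$; since the trapping regions $H_{\mu}$ are uniformly bounded (the vertices $A,B,M,E,D$ and the point $Y$ depend continuously on $(a,b)$ over the closure of the compact parameter domain), one may take $\Omega$ to be a large closed ball, or the union $\bigcup_{\mu}H_{\mu}$. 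Condition \ref{A2} is supplied by Corollary \ref{trapping-region}: each $H_{\mu}=H=\bigcup_{0\le j\le p}f^{j}(H_{0})$ is compact, satisfies $f_{\mu}(H_{\mu})\subseteq H_{\mu}$, and $\Aaa_{\mu}=\bigcap_{n\ge 0}f^{n}_{\mu}(H_{\mu})$, while the paragraph preceding the statement already asserts continuity of $H_{\mu}$ in the Hausdorff metric.

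First I would therefore dispatch \ref{A1} and \ref{A2} by the observations above, and then concentrate the real work on the remaining hypothesis: the uniform convergence $\dH{f^{n}_{\mu}(H_{\mu})}{\Aaa_{\mu}}\to 0$ as $n\to\infty$, uniformly in $\Uee_{\text{tan}}$. The key mechanism driving this convergence is area contraction. Since $f_{\mu}$ has constant Jacobian $|b|<1$ on each half-plane, the Lebesgue measure satisfies $\lebmes{f^{n}_{\mu}(H_{\mu})}=|b|^{n}\lebmes{H_{\mu}}\to 0$, so the nested images shrink in area. To upgrade this to Hausdorff convergence I would use the geometric structure established in the renormalization analysis: the images $f^{n}_{\mu}(H_{0})$ are thin polygonal regions whose interiors are stretched along $\ustmani{X}$ and whose transverse width is controlled by the stable contraction rate $\alpha<1$. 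Combining the uniform expansion $\beta>\sqrt2$ in the unstable direction (Lemma \ref{4}) with the transverse contraction shows that each point of $f^{n}_{\mu}(H_{\mu})$ lies within distance $O(\alpha^{n})$ of $\cl\ustmani{X}$, giving $d_{a}(f^{n}_{\mu}(H_{\mu}),\Aaa_{\mu})\to 0$; the reverse inequality $d_{a}(\Aaa_{\mu},f^{n}_{\mu}(H_{\mu}))=0$ is automatic since $\Aaa_{\mu}\subseteq f^{n}_{\mu}(H_{\mu})$ by invariance.

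The main obstacle is making this convergence genuinely \emph{uniform} in $\mu$. The number $p=p(a,b)$ of iterates needed to build $H$, as well as the constants governing the geometry, may vary with the parameter and in principle blow up as $(a,b)$ approaches the boundary point $(2,0)$ on which $\Uee_{\text{tan}}$ accumulates. The plan is to exploit the structure of $\Uee_{\text{tan}}=\bigcup_{n>1}B_{n}$ from Lemma \ref{lem:behaviour-hp}: on each ball $B_{n}$ the combinatorial type is constant ($p=q=n$ or $n+1$), and the contraction rate $\alpha$ and expansion rate $\beta$ are bounded away from the degenerate values uniformly on each $B_{n}$. I would establish a uniform estimate of the form $\dH{f^{m}_{\mu}(H_{\mu})}{\Aaa_{\mu}}\le C\lambda^{m}$ with $C$ and $\lambda<1$ depending only on uniform bounds for $\alpha$, $\beta$, $\diam\Omega$, and an upper bound for the distortion, all of which hold throughout $\Uee_{\text{tan}}$ by continuity and compactness of the relevant parameter sets. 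Once such a parameter-independent decay estimate is secured, the uniform convergence hypothesis of Lemma \ref{glen-key} is verified and the continuity of $\Aaa_{\mu}$ follows.
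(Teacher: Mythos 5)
Your overall framework coincides with the paper's: verify \ref{A1} and \ref{A2} (which you do correctly, via boundedness of the regions $H_{\mu}$ and Corollary \ref{trapping-region}) and then establish the uniform convergence hypothesis of Lemma \ref{glen-key}. The genuine gap is in that last, crucial step. You assert that ``combining the uniform expansion $\beta>\sqrt2$ with the transverse contraction shows that each point of $f^{n}_{\mu}(H_{\mu})$ lies within distance $O(\alpha^{n})$ of $\cl\ustmani{X}$.'' That assertion is precisely what has to be proved, and nothing in your sketch supplies it. The stable directions and stable manifolds of Theorems \ref{hyp} and \ref{hyperbolicity} exist only at almost every point, and $f^{n}_{\mu}(H_{\mu})$ is not a thin strip around $\ustmani{X}$ of width $O(\alpha^{n})$: the map $f^{n}$ folds $H_{0}$ along the switching line $\{x=0\}$ into (up to exponentially) many affine pieces, and near a fold the image need not be squeezed at rate $\alpha^{n}$ in any transverse direction --- locally it may contain a disc of radius comparable to the square root of its area, i.e.\ of order $|b|^{n/2}=(\alpha\beta)^{n/2}$, which is strictly larger than $\alpha^{n}$ since $\beta>\alpha$. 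So an $O(\alpha^{n})$ bound cannot follow from soft expansion/contraction reasoning; it would require an actual analysis of the folds, which you do not give. (Also, Lemma \ref{4} is a statement about segments of unstable manifolds crossing the axes, not a uniform expansion estimate for arbitrary points, so it cannot play the role you assign to it.)

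The paper sidesteps all of this with an elementary isoperimetric argument (following Theorem 7.2 of \cite{glendinning2019robust}): since $\lebmes{f^{n}_{\mu}(H_{\mu})}\leq |b|^{n}K_{\mu}$ with $K_{\mu}=\lebmes{H_{\mu}}$, any ball $\ball{x}{r}\subset f^{n}_{\mu}(H_{\mu})$ satisfies $\pi r^{2}\leq |b|^{n}K_{\mu}$, hence every $x\in f^{n}_{\mu}(H_{\mu})$ lies within $\sqrt{K_{\mu}/\pi}\,|b|^{n/2}$ of $\fr f^{n}_{\mu}(H_{\mu})$; applying this to the points of $\Aaa_{\mu}\subset f^{n}_{\mu}(H_{\mu})$ and using the triangle inequality yields the explicit bound $\dH{\Aaa_{\mu}}{f^{n}_{\mu}(H_{\mu})}\leq 2\sqrt{K_{\mu}/\pi}\,|b|^{n/2}$, with no hyperbolicity input at all. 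Uniformity is then obtained \emph{locally}: fix $\mu_{0}$, take a closed neighbourhood $U\subset\Uee_{\text{tan}}$ of $\mu_{0}$, and bound $K_{\mu}\leq K_{\max}$ and $|b|\leq b_{\max}<1$ on $U$. This also repairs your secondary weak point: you invoke ``compactness of the relevant parameter sets'' to get constants uniform throughout $\Uee_{\text{tan}}$, but $\Uee_{\text{tan}}$ is open and accumulates on $(2,0)$, hence not compact; since continuity is a local property, uniform convergence on a closed neighbourhood of each parameter is all that Lemma \ref{glen-key} requires, and that is exactly what the paper's explicit exponential bound delivers.
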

\begin{proof}
	Let $\mu_{0}\in \Uee_{\text{tan}}$. By Lemma \ref{glen-key} it suffices to prove the uniform convergence of $f_{\mu}^{n}(H_{\mu})$ in some neighbourhood of $\mu_{0}$, for $\mu=(a,b)$. We follow closely the proof of Theorem 7.2 from \cite{glendinning2019robust}. Let $K_{\mu}=\lebmes{H_{\mu}}$. Then, since the Jacobian of $f_{\mu}$ is equal to $b$, $\lebmes{f_{\mu}^{n}(H_{\mu})} \leq |b|^{n}K_{\mu}$. 
	
	Let $x\in f^{n}_{\mu}(H_{\mu})$. As every ball $\ball{x}{r}$, $r>0$, contained in $f^{n}_{\mu}(H_{\mu})$ must satisfy $\lebmes{\ball{x}{r}}\leq \lebmes{f^{n}_{\mu}(H_{\mu})}$, we obtain a bound on radius $r$, that is $r\leq\sqrt{K_{\mu}/ \pi}|b|^{n/2}$, which in turn reveals
	\begin{equation}\label{one}
	 d(x,\fr f^{n}_{\mu}(H_{\mu}))\leq \sup\{r>0\colon \ball{x}{r}\subset f^{n}_{\mu}(H_{\mu})\}\leq\sqrt{\dfrac{K_{\mu}}{\pi}}|b|^{n/2} . 
	\end{equation}Since $\Aaa_{\mu}\subset H_{\mu}$, \eqref{one} can be applied to points of $\Aaa_{\mu}$, giving
	\[ d_{H}(\Aaa_{\mu},\fr f^{n}_{\mu}(H_{\mu}))\leq \sqrt{\dfrac{K_{\mu}}{\pi}}|b|^{n/2} .\] Consequently, for any $x\in f^{n}_{\mu}(H_{\mu})$, \[ d_{H}(\Aaa_{\mu},x)\leq d_{H}(\Aaa_{\mu},\fr f^{n}_{\mu}(H_{\mu}))+ d_{H}(x,\fr f^{n}_{\mu}(H_{\mu}))\leq 2\sqrt{\dfrac{K_{\mu}}{\pi}}|b|^{n/2}.\] Thus, \[ d_{H}(\Aaa_{\mu},f^{n}_{\mu}(H_{\mu}))\leq 2\sqrt{\dfrac{K_{\mu}}{\pi}}|b|^{n/2}.\]
	 
	 Let $U\subset \Uee_{\text{tan}}$ be a closed neighbourhood of $\mu_{0}$. Denote $K_{\max}=\sup_{\mu\in U}K_{\mu}$ and $b_{\max}=\sup\{|b|\colon\mu=(a,b)\in U\}$. Then
	\[ d_{H}(\Aaa_{\mu},f^{n}_{\mu}(H_{\mu}))\leq 2\sqrt{\dfrac{K_{\max}}{\pi}}{b_{\max}}^{n/2}\] for every $\mu\in U$. As $b_{\max}<1$ we have proved uniform convergence around $\mu_{0}$ and the claim.
\end{proof}

\begin{cor}
	The prime ends rotation number $\exrotation{f_{\mu},\Aaa_{\mu}}$ of the family of Lozi attractors $\{\Aaa_{\mu}\}_{\mu\in \Uee_{\text{tan}}}$ vary continuously with $\mu$.
\end{cor}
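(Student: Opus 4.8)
The plan is to identify $\exrotation{f_{\mu},\Aaa_{\mu}}$ with the Poincar\'e rotation number of the circle homeomorphism induced by $f_{\mu}$ on the prime ends of the unbounded complementary domain of $\Aaa_{\mu}$, and then to obtain continuity by combining the Hausdorff continuity of $\mu\mapsto\Aaa_{\mu}$ proved above with the continuous dependence of $f_{\mu}$ on $\mu$.

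First I would set up the prime ends framework. Since $\Aaa_{\mu}=\cl\ustmani{X}$ is the closure of a connected set, it is a continuum, and it is invariant under $f_{\mu}$, which (as $b<0$ makes the Jacobian $-b$ positive) is an orientation-preserving homeomorphism; moreover $f_{\mu}$ is proper, so it extends to an orientation-preserving homeomorphism of $\sphere{2}=\plane\cup\{\infty\}$ fixing $\infty$. Let $U_{\mu}$ be the connected component of $\sphere{2}\setminus\Aaa_{\mu}$ containing $\infty$. As a complementary domain of a continuum in $\sphere{2}$, $U_{\mu}$ is simply connected, so its Carath\'eodory prime ends compactification is defined and its prime ends form a circle $\pecircle{U_{\mu}}$. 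Because $f_{\mu}$ preserves $\Aaa_{\mu}$ and $\infty$ and preserves orientation, it restricts to an orientation-preserving homeomorphism of $U_{\mu}$, and hence induces an orientation-preserving circle homeomorphism of $\pecircle{U_{\mu}}$ whose rotation number is, by definition, $\exrotation{f_{\mu},\Aaa_{\mu}}$.

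Next I would deduce continuity. Fix $\mu_{0}\in\Uee_{\text{tan}}$ and a closed neighbourhood $U\subset\Uee_{\text{tan}}$ of $\mu_{0}$. On a fixed compact set containing all the $H_{\mu}$ with $\mu\in U$, the maps $f_{\mu}$ converge to $f_{\mu_{0}}$ uniformly as $\mu\to\mu_{0}$, since the Lozi maps depend continuously on $(a,b)$; and by the preceding lemma $\Aaa_{\mu}\to\Aaa_{\mu_{0}}$ in the Hausdorff metric. These are the hypotheses needed to apply a continuity theorem for prime ends rotation numbers of invariant continua, yielding $\exrotation{f_{\mu},\Aaa_{\mu}}\to\exrotation{f_{\mu_{0}},\Aaa_{\mu_{0}}}$; since $\mu_{0}$ was arbitrary, continuity on all of $\Uee_{\text{tan}}$ follows.

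The step I expect to be the main obstacle is precisely this last passage: the prime ends compactification is \emph{not} continuous with respect to Hausdorff convergence of the underlying continuum, so one cannot compare the circles $\pecircle{U_{\mu}}$ naively, and prime ends rotation numbers can in general jump under perturbation. The way around this is to exploit the structure established in the previous sections, namely that the attractors are nested inside the continuously varying trapping regions through $\Aaa_{\mu}=\bigcap_{n}f_{\mu}^{n}(H_{\mu})$. This furnishes a uniform outer approximation of each $U_{\mu}$ which, together with the orientation-preserving dynamics on the correctly nested domains, is what verifies the hypotheses of the cited continuity theorem. Granting these hypotheses, the conclusion is routine, since the Poincar\'e rotation number is continuous in the $C^{0}$ topology on orientation-preserving circle homeomorphisms.
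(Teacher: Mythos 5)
Your proposal takes essentially the same route as the paper: the paper's entire proof is a citation of Proposition 2.2 of \cite{barge-cont} --- precisely the kind of continuity theorem for prime end rotation numbers you describe, whose hypotheses (continuous dependence of $f_{\mu}$ on $\mu$ and Hausdorff continuity of the invariant continua $\Aaa_{\mu}$) are supplied by the preceding lemma --- together with the remark that the one-parameter statement extends to two parameters. Your extra caution about Hausdorff convergence not controlling the prime end compactification, and the role of the trapping regions, elaborates on why such a theorem applies but does not change the approach.
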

\begin{proof}
	This is a consequence of Proposition 2.2. in \cite{barge-cont}. Although the result in \cite{barge-cont} is for one parameter family, it is clear from the proof that it works for two parameters as well.
\end{proof}
\textbf{Acknowledgements}
This work was supported by the National Science Centre, Poland (NCN), ~grant no. 2019/34/E/ST1/00237. We would like to thank Jan P. Boroński for his constant support, many invaluable comments and fruitful discussions. We would also like to thank Dyi-Shing Ou and Sonja \v{S}timac for their suggestions and proofreading first versions of manuscript. Lastly, we thank David Simpson for bringing our attention to recent relevant results on the border collision normal form family.
\bibliography{lozi_map_bib}
\bibliographystyle{alphaurl}

\end{document}